% Eric's version was the one from Oct 28, 2016
%
% Vic's version of Nov. 8, 2016 
% --re-combined Thms. 1.6, 1.7 back into a single statement, Thm. 1.6
% --fixed the references to Bodnar and Rhoades
%
% Eric's version of Nov. 14, 2016 with a few more tweaks was qKreweras13.tex
%   and is the one that Vic re-arXived as Version 2.
\documentclass[10pt]{amsart}
\usepackage{latexsym,amsmath,amscd,amssymb,epsfig,verbatim}
\usepackage[margin=1in,letterpaper,portrait]{geometry}

\RequirePackage{color}
\definecolor{myred}{rgb}{0.75,0,0}
\definecolor{mygreen}{rgb}{0,0.5,0}
\definecolor{myblue}{rgb}{0,0,0.65}

\RequirePackage{ifpdf}
\ifpdf
  \IfFileExists{pdfsync.sty}{\RequirePackage{pdfsync}}{}
  \RequirePackage[pdftex,
   colorlinks = true,
   urlcolor = myblue, % \href{...}{...} external (URL)
   citecolor = mygreen, % \cite{...}
   linkcolor = myred, % \ref{...} and \pageref{...}
   pagebackref,
%   pdfpagemode=None,
   bookmarksopen=true]{hyperref}
\else
  \RequirePackage[hypertex]{hyperref}
\fi

\theoremstyle{plain}
  \newtheorem{theorem}{Theorem}[section]
  \newtheorem{proposition}[theorem]{Proposition}
  \newtheorem{lemma}[theorem]{Lemma}
  \newtheorem{corollary}[theorem]{Corollary}
  \newtheorem{conjecture}[theorem]{Conjecture}
\theoremstyle{definition}
  \newtheorem{definition}[theorem]{Definition}
  \newtheorem{example}[theorem]{Example}

 \theoremstyle{remark}
  \newtheorem{remark}[theorem]{Remark}

\numberwithin{equation}{section}

\def\NN{{\mathbb N}}
\def\ZZ{{\mathbb Z}}
\def\QQ{{\mathbb Q}}

\def\FF{{\mathbb F}}

\def\one{\mathbf{1}}

\def\crossterms{\mathrm{c}}
\def\lcm{\mathrm{lcm}}

\def\sBC{\left\lfloor \frac{\ell(\lambda)}{2} \right\rfloor}
\def\Cat{\mathrm{Cat}}
\def\Nar{\mathrm{Nar}}
\def\Krew{\mathrm{Krew}}

\def\Sym{\mathrm{Sym}}

\def\rank{\mathrm{rank}}

%Eric's macros
%\def\H{\mathcal{H}_m}
%\newcommand{\g}{\mathfrak{g}}
\def\ar{\hat{A}(e)}
\newcommand{\aF}{{\bar{\mathbb{F}}}}

\newcommand{\dualp}{\lambda'}
\newcommand{\countp}{\mu}
\newcommand{\orbit}{\mathcal O}

\newcommand{\0}{\orbit}
\newcommand{\krew}{\Krew(\Phi,\0,m;q)}

\newcommand{\CM}{\mathbb C}

%A macro from Vic
\newcommand{\BCDexponent}
   {\psi(n,m,\lambda)}
%  {m(n-\hat{\ell}(\lambda)) - \frac{c(\lambda)}{2}-\frac{L(\lambda)}{4}}

\def\symm{\mathfrak{S}}
\def\ggg{\mathfrak{g}}
\def\levi{\mathfrak{l}}

\def\BBB{{\mathcal{B}}}
\def\Par{{\mathcal{P}}}

\newcommand{\qbin}[3]{\left[ \begin{matrix} #1 \\ #2 \end{matrix} \right]_{#3}}
\newcommand{\binomial}[2]{\left( \begin{matrix} #1 \\ #2 \end{matrix} \right)}

%--------------------------------------------------

\begin{document}

\title[Weyl group $q$-Kreweras numbers and cyclic sieving]
{Weyl group $q$-Kreweras numbers and cyclic sieving}

\author{Victor Reiner}
\address{School of Mathematics\\
University of Minnesota\\
Minneapolis, MN 55455}
\email{reiner@math.umn.edu}

\author{Eric Sommers}
\address{Dept. of Mathematics and Statistics\\
University of Massachusetts- Amherst\\
Amherst, MA 01003}
\email{esommers@math.umass.edu}

\thanks{First author supported by 
NSF grant DMS-1001933, 
second author supported by NSA grant H98230-11-1-0173 and 
by a National Science Foundation Independent Research and Development plan.}

\subjclass{20F55, 51F15}

\keywords{reflection group, Weyl group, Catalan, Narayana, Kreweras,
nilpotent orbit, cyclic sieving phenomenon}

\begin{abstract}
The paper concerns a definition for $q$-Kreweras numbers
for finite Weyl groups $W$, refining the $q$-Catalan numbers for $W$, and arising
from work of the second author.  We give explicit formulas
in all types for the $q$-Kreweras numbers.  In the classical types $A, B, C$, 
we also record formulas for the $q$-Narayana numbers and in the process show that the formulas 
depend only on the Weyl group (that is, they coincide in types $B$ and $C$).   
In addition we verify that in the classical types 
$A,B,C,D$ that the $q$-Kreweras numbers 
obey the expected cyclic sieving phenomena when evaluated at
appropriate roots of unity.
\end{abstract}

\maketitle

%\tableofcontents

%%%%%%%%%%%%%%%%%%%%%%%%%%%%%%%%%%%%%%%%%%%%%%%%%%%%%%%%%%%%%%%
\section{Introduction}
\label{intro-section}
%%%%%%%%%%%%%%%%%%%%%%%%%%%%%%%%%%%%%%%%%%%%%%%%%%%%%%%%%%%%%%%
This paper examines polynomials in $q$,
generalizing what are sometimes 
called {\it Kreweras} %and {\it Narayana} 
numbers, 
as refinements of the {\it Catalan numbers}.
The $q$-Kreweras numbers arose as
by-products of work of the second author \cite{Sommers2}
on nilpotent orbits in 
a simple Lie algebra $\ggg$ over $\CM$ under the action of the associated connected algebraic group $G$.
Their definition, using the Lusztig-Shoji algorithm in Springer theory,
is reviewed in Section~\ref{general-Kreweras-definition-section} below.

More specifically, one has polynomials in $q$ defined for 
certain positive integral parameters $m$ (see below) and 
for each nilpotent orbit $\0$ in $\ggg$ and 
each {\it local system} on $\0$ that arises in the Springer correspondence.
Let $\Phi$ be the root system of $\ggg$ relative to a Cartan subalgebra $\mathfrak h$.
What we call here the 
{\it $q$-Kreweras numbers} $\krew$ correspond
to the {\it trivial} local system on $\0$.
In this paper we show three new
results about the polynomials $\krew$, namely
%and $\Nar(\Phi,m,k;q)$, namely
Theorems~\ref{q-Kreweras-formula-theorem},
%~\ref{q-Narayana-formula-theorem},
~\ref{divisibility-and-nonnegativity-theorem_all}, 
~\ref{CSP-theorem} below, which will be proven in
Sections ~\ref{proof-of-Kreweras-formulas-section},
%~\ref{proof-of-Narayana-formulas-section},
~\ref{proof-of-divisibility-and-nonnegativity-section},
~\ref{proof-of-CSP-theorem-section}, respectively.
In types $A, B, C$, we also discuss a definition of 
$q$-Narayana numbers $\Nar(\Phi,m,k;q)$, which are sums of $q$-Kreweras numbers depending on a statistic on $\0$,
and establish Theorem \ref{q-Narayana-formula-theorem} in 
Section \ref{proof-of-Narayana-formulas-section}.
We will take up the $q$-Narayana numbers for other types in a sequel paper.

The further parameter $m$ in the definition 
of $\krew$ amd $\Nar(\Phi,m,k;q)$
 is a positive integer 
%satisfying the compatibility condition of being 
that is {\it very good for $\Phi$}: 
%in the terminology of \cite[\S 5]{Sommers2}; 
this amounts to $m$ being relatively prime to the {\it Coxeter number} $h$ in 
types $A,E,F,G$, and the (weaker) condition of $m$ being
{\it odd} in the classical types $B,C,D$.

Let $W$ be the Weyl group of $\Phi$.  
Since $\ggg$ is simple, $W$ acts irreducibly on $\mathfrak h$, which is called the 
reflection representation of $W$ and denoted by $V$.  Let $r = \dim V$, the rank of $\ggg$. 
%:= \mathfrak h$ of dimension $r$. % \cong \CM^r$.
Let $d_1 \leq \ldots \leq d_r$ be the degrees of any 
set of fundamental invariants for the action of $W$ on
the polynomials $S:=\Sym(V^*)$ on $V$.
The Coxeter number $h$ of $\Phi$ is equal to $d_r$.
Define 
\begin{equation}
\label{W-q-Catalan-definition}
\Cat(W,m;q):=\prod_{i=1}^r \frac{[m-1+d_i]_q}{[d_i]_q}
\end{equation}
where $[n]_q := 1+q+q^2+\cdots+q^{n-1}$.
This is known to be a polynomial in $\NN[q]$ for all very good $m$.
Results from \cite[\S 5.3]{Sommers2} imply a summation formula
\begin{equation}
\label{q-Kreweras-sum-to-q-Cat}
\Cat(W,m;q)=\sum_{\0} \krew
\end{equation}
as $\0$ runs through the nilpotent orbits in 
$\ggg$, which is a generalization of known results for the specialization at $q=1$, as we now recall.
%See Definition~\ref{q-Kreweras-definition} and
%\eqref{original-form-of-q-Kreweras-sums-to-q-Catalan}
%below.     ** revise in light of moving stuff to intro **

In type $A_{n-1}$, so that $m$ is
very good if $\gcd(m,n)=1$, this $\Cat(W,m;q)$ is the 
{\it rational $q$-Catalan number} 
considered, for example, 
by Armstrong, Rhoades and Williams \cite{ArmstrongRhoadesWilliams}.  
At the specializations $m = h+1 = n+1$ and $q=1$, these become 
the {\it Catalan numbers} 
\begin{equation}
\label{type-A-Catalan}
C_n :=\frac{1}{n+1}\binom{2n}{n}
\end{equation}
which have 
a plethora of combinatorial interpretations 
(see Stanley \cite{Stanley-Catalan-book} and
\cite[Exer. 6.19]{Stanley-EC2}),
some restricting to interpretations of the successive refinements
by the {\it Narayana} $N(n,k)$ and {\it Kreweras numbers} $\Krew(\lambda)$:
$$
\begin{array}{rcccccl}
C_n &=&\displaystyle \sum_{k=1}^n N(n,k)
    & \text{ where }
    & N(n,k) 
    &:= &\displaystyle \frac{1}{k}\binom{n-1}{k-1}\binom{n}{k-1} \\
N(n,k)&=&\displaystyle \sum_{\substack{\lambda \in \Par(n):\\ \ell(\lambda)=k}} \Krew(\lambda)
      & \text{ where }
      &\Krew(\lambda) 
      &:=& \displaystyle \frac{1}{n+1} \binom{n+1}{n-k,\mu_1(\lambda),\mu_2(\lambda), \ldots, \mu_n(\lambda)}
\end{array}
$$
Here $\Par(n)$ is the set of all partitions 
$
\lambda = (\lambda_1 \geq \lambda_2 \geq \ldots \geq \lambda_{\ell} > 0)
$
of the number $n$ and $\mu_j(\lambda)$ denotes the multiplicity of the number
$j$ among the parts $\{ \lambda_i \}$.
The number of parts $\ell=:\ell(\lambda)$ of $\lambda$ is called the {\it length} of
$\lambda$.

Kreweras \cite{Kreweras} originally interpreted
$C_n, N(n,k), \Krew(\lambda)$ in terms of the set $NC(n)$
of {\it noncrossing set partitions}
of $\{1,2,\ldots,n\}$ arranged circularly in the plane; that is, partitions 
for which the convex hull of the blocks are pairwise disjoint.
The Catalan number $C_n$ counts the whole set $NC(n)$,
while the Narayana number $N(n,k)$ counts those noncrossing set partitions
with exactly $k$ blocks, and the Kreweras number $\Krew(\lambda)$ counts
those for which $\lambda$ lists their block sizes.
The following table illustrates these interpretations for $n=4$.

\vskip.1in

\begin{center}
\begin{tabular}{|c|c|c|c|c|}\hline
$\substack{\text{noncrossing}\\\text{partitions}}$ & 
%$\substack{\text{nondecreasing}\\\text{parking functions}}$ &
$\lambda$ & $\Krew(\lambda)$ & $k$& $N(n,k)$ 
\\ \hline\hline
% &  &  &  &  & \\ 
$\substack{1234}$  &
% $ \substack{(1,1,1,1)} $ &
$(4)$ & $\frac{1}{5}\binom{5}{4,1}=1$ &
$1$ & $\frac{1}{1}\binom{3}{0}\binom{4}{0} =1$ \\ 
% &  &  &  &  & \\ 
\hline\hline
% &  &  &  &  & \\ 
$ \substack{123-4\\124-3\\134-2\\1-234} $ & 
%$ \substack{(1,1,1,2)\\(1,1,1,3)\\(1,1,1,4)\\(1,2,2,2)} $ &
$(3,1)$ & $\frac{1}{5}\binom{5}{3,1,1}=4$ & & \\ 

  &  &  & $2$ & $\frac{1}{2}\binom{3}{1}\binom{4}{1} =6$ \\ 

$ \substack{12-34\\14-23} $ & 
%$ \substack{(1,1,2,2)\\(1,1,3,3)} $ &
$(2,2)$ & $\frac{1}{5}\binom{5}{3,2}=2$ & & \\
% &  &  &  &  & \\ 
\hline\hline
% &  &  &  &  & \\ 
$ \substack{12-3-4\\13-2-4\\14-2-3\\1-23-4\\1-24-3\\1-2-34} $ & 
%$ \substack{(1,1,2,3)\\(1,1,2,4)\\(1,1,3,4)\\(1,2,2,3)\\(1,2,2,4)\\(1,2,3,3)} $ &
$(2,1,1)$ & $\frac{1}{5}\binom{5}{2,2,1}=6$ &
$3$ & $\frac{1}{3}\binom{3}{2}\binom{4}{2} =6$ \\
% &  &  &  &  & \\ 
\hline\hline
% &  &  &  &  & \\ 
$\substack{1-2-3-4}$ & 
%$ \substack{(1,2,3,4)} $ &
$(1,1,1,1)$ & 
$\frac{1}{5}\binom{5}{1,4}=1$ &
$4$ & $\frac{1}{4}\binom{3}{3}\binom{4}{3} =1$\\
%  &  &  &  &  & \\ 
\hline
\end{tabular}
\end{center}

%\vskip.1in
%
%\begin{tabular}{|c|c|c|c|c|c|}\hline
%$\substack{\text{noncrossing}\\\text{partitions}}$ & 
%$\substack{\text{nondecreasing}\\\text{parking functions}}$ &
%$\lambda$ & $\Krew(\lambda)$ & $k$& $N(n,k)$ 
%\\ \hline\hline
%% &  &  &  &  & \\ 
%$\substack{1234}$ & $ \substack{(1,1,1,1)} $ &
%$(4)$ & $\frac{1}{5}\binom{5}{4,1}=1$ &
%$1$ & $\frac{1}{1}\binom{3}{0}\binom{4}{0} =1$ \\ 
%% &  &  &  &  & \\ 
%\hline\hline
%% &  &  &  &  & \\ 
%$ \substack{123-4\\124-3\\134-2\\1-234} $ & 
%$ \substack{(1,1,1,2)\\(1,1,1,3)\\(1,1,1,4)\\(1,2,2,2)} $ &
%$(3,1)$ & $\frac{1}{5}\binom{5}{3,1,1}=4$ & & \\ 
%
%&  &  &  & $2$ & $\frac{1}{2}\binom{3}{1}\binom{4}{1} =6$ \\ 
%
%$ \substack{12-34\\14-23} $ & 
%$ \substack{(1,1,2,2)\\(1,1,3,3)} $ &
%$(2,2)$ & $\frac{1}{5}\binom{5}{3,2}=2$ & & \\
%% &  &  &  &  & \\ 
%\hline\hline
%% &  &  &  &  & \\ 
%$ \substack{12-3-4\\13-2-4\\14-2-3\\1-23-4\\1-24-3\\1-2-34} $ & 
%$ \substack{(1,1,2,3)\\(1,1,2,4)\\(1,1,3,4)\\(1,2,2,3)\\(1,2,2,4)\\(1,2,3,3)} $ &
%$(2,1,1)$ & $\frac{1}{5}\binom{5}{2,2,1}=6$ &
%$3$ & $\frac{1}{3}\binom{3}{2}\binom{4}{2} =6$ \\
%% &  &  &  &  & \\ 
%\hline\hline
%% &  &  &  &  & \\ 
%$\substack{1-2-3-4}$ & $ \substack{(1,2,3,4)} $ &
%$(1,1,1,1)$ & $\frac{1}{5}\binom{5}{1,4}=1$ &
%$4$ & $\frac{1}{4}\binom{3}{3}\binom{4}{3} =1$\\
%%  &  &  &  &  & \\ 
%\hline
%\end{tabular}

\vskip.1in

\subsection{Generalizations to other $W$ and other $m$}

\subsubsection{Generalizing noncrossing partitions to other $W$ and $m = sh+1$}

The set of noncrossing set partitions $NC(n)$ 
has a generalization to all Weyl groups and for any parameter $m$ of the form $sh+1$ where $s \in \mathbb N$.
The case of $m =h+1$ was introduced by Bessis \cite{Bessis1}.\footnote{In fact, Bessis's work in \cite{Bessis1} deals not just with Weyl groups, but
all finite real reflection groups, and his later work in 
\cite{Bessis2} deals more generally with the 
class of {\it well-generated complex reflection groups}.  See
work of Gordon and Griffeth \cite{GordonGriffeth} for definitions of
Catalan and $q$-Catalan numbers that apply to {\it all} 
complex reflection groups.}

\begin{definition}
Consider 
the Cayley graph for $W$ with respect to the 
generating set of {\it all} reflections in $W$.  Fix a {\it Coxeter element $c$} in $W$.
Then $NC(W)$ is defined to be the set of $w$ in $W$ that lie
along a shortest path between the identity 
element and $c$ in this Cayley graph.  We regard $NC(W)$ as a partially ordered set in 
which $x \leq y$ if there is a shortest path from the identity to $c$ in this
Cayley graph that passes first through $x$ and then through $y$.
\end{definition}

Bessis  \cite{Bessis1}  showed
that the cardinality of $NC(W)$ equals $\Cat(W,h+1,q=1)$, 
generalizing Kreweras's original interpretation of the Catalan numbers $C_n$ counting the number of  
noncrossing set partitions $NC(n)$ in type $A_{n-1}$.  

Armstrong \cite{Armstrong1} defined a generalization of $NC(W)$ for each positive integer $s$, 
inspired by Edelman's {\it $s$-divisible noncrossing partitions} \cite[\S 4]{Edelman}.
\begin{definition}
Let $NC^{(s)}(W)$ denote the set of all 
$s$-element multichains $w_1 \leq \cdots \leq w_s$
in $NC(W)$. 
\end{definition}
Armstrong showed that cardinality of
$NC^{(s)}(W)$ equals $\Cat(W,sh+1,q=1)$.
Note that when $s=1$ one recovers the set $NC(W)=NC^{(1)}(W)$.

\subsubsection{Generalizing Kreweras numbers to other $W$ and very good $m$}

The Kreweras numbers $\Krew(\lambda)$ in type $A_{n-1}$ have a generalization to any $W$ and any very good $m$.  

Let $X \subset V$ be the common fixed points of a set of reflections in $W$.  Then the pointwise-stabilizer 
subgroup $W_X$ of $X$ in $W$ is a parabolic subgroup of $W$.
The normalizer $N(W_X)$ of $W_X$ within $W$ is then the not-necessary-pointwise-stabilizer of
$X$ within $W$.  We are interested in $X$ and $W_X$ up to $W$-conjugacy, so we set  $[X]:=W \cdot X$ for the $W$-orbit of $X$.

Associated to the subspace $X$ is a hyperplane arrangement, 
obtained by considering the hyperplanes in $X$ of the form $V^{w} \cap X$ where $w$ is a reflection in $W$ and 
$V^{w}$ denotes the pointwise-stabilizer of $w$ on $V$.
The characteristic polynomial of a hyperplane arrangement
\cite{OrlikTerao} is an important invariant, denoted by $p_X(t)$ for 
the hyperplane arrangement we are considering in $X$.

Using work of Orlik-Solomon \cite{OrlikSolomon1}, we know 
from \cite{Sommers1} that
when $m$ is a very good for $W$ that
\begin{equation}  \label{ungraded_rep}
\sum_{[X]} \tfrac{p_X(m)}{[N(W_X):W_X]}  \cdot 1_{W_X}^W,
\end{equation}
%where $X$ is some element in $[X]$,
is a representation of $W$ whose character takes the value $m^{\dim V^w}$ at $w \in W$.
Moreover, by Shepard-Todd \cite{ShephardTodd}, proved uniformly by Solomon \cite{Solomon},
we know that the multiplicity of the trivial representation in \eqref{ungraded_rep}
is $\Cat(W, m, q=1)$, which implies that
\begin{equation}   \label{ungraded_sum_Cat}
\Cat(W, m, q=1)  = \sum_{[X]} \tfrac{p_X(m)}{[N(W_X):W_X]}.
\end{equation}

In fact, $p_X(t)$ takes a simple form.  It is a monic polynomial with 
positive roots 
\begin{equation}
\label{Orlik-Solomon-exponents}
m_1(X),m_2(X),\ldots,m_{\dim(X)}(X)
\end{equation}
called the {\it Orlik-Solomon exponents} for $X$.
The fact that $p_X(t)$ has this form is a consequence of this hyperplane arrangement being
{\it free} (see \cite{Terao}), proved by Orlik-Terao \cite{OrlikTerao} and then uniformly by Broer \cite{Broer} and Douglass \cite{Douglass}.

In type $A_{n-1}$, $W_X$ is, up to conjugacy, just a Young subgroup of $S_n$ of the form $S_{\lambda_1} \times S_{\lambda_2} \times \dots S_{\lambda_k}$, which allows us to associate the partition $\lambda = (\lambda_1, \lambda_2, \dots, \lambda_k)$ of $n$ to $W_X$.  
Then $p_X(t) = (t-1)(t-2) \cdots (t-k+1)$ and 
$\tfrac{p_X(n+1)}{[N(W_X):W_X]}$ coincides with $\Krew(\lambda)$.  
Based on this fact and 
\eqref{ungraded_sum_Cat}, Athanasiadis and Reiner \cite{AthanasiadisReiner} considered
$$\Krew(W,[X],m) := \tfrac{p_X(m)}{[N(W_X):W_X]} =   \tfrac{1}{[N(W_X):W_X]} \prod_{i=1}^{\dim(X)} \left( m-m_i(X) \right)$$
as the Kreweras numbers for arbitrary $W$ and very good $m$.  They then showed that 
$\Krew(W, [X], h+1)$ equals the cardinality 
of the set of $w \in NC(W)$ with $V^w \in [X]$ \cite[Theorem 6.3]{AthanasiadisReiner}.  In classical ($A,B,C,D$) and dihedral ($I$) types,
work of Rhoades \cite{Rhoades} showed more generally that 
$\Krew(W, [X], sh+1)$ counts the elements $w_1 \leq \cdots \leq w_s$
in $NC^{(s)}(W)$ with $V^{w_1}$ in $[X]$; this remains open
in the exceptional types $E,F, H$.

\subsection{Cyclic Sieving}

Armstrong  defined a natural action of the cyclic group $\ZZ/sh\ZZ$ on $NC^{(s)}(W)$.
In \cite{BessisR} it was shown that this gives an instance
of the {\it cyclic sieving phenomenon} introduced in \cite{RSW} on 
$NC^{(s)}(W)$.
To state it, for a positive integer $d$, let
$\omega_d:=e^{\frac{2\pi i}{d}}$, a primitive $d^{th}$ root-of-unity.

\begin{theorem}[\cite{BessisR}]  \label{BR_CSP}
For $m = sh+1$,
one has that
$\Cat(W,m;q=\omega_{d})$ counts
those 
$$w_1 \leq \cdots \leq w_s \in NC^{(s)}(W)$$
that are fixed under the action of
an element of order $d$ in the $\ZZ/sh\ZZ$-action.
\end{theorem}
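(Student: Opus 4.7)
The plan is to verify the cyclic sieving property directly from its definition: for each divisor $d$ of $sh$, I would compute the value $\Cat(W,sh+1;\omega_d)$ and match it with the number of fixed points on $NC^{(s)}(W)$ of an element of order $d$ in Armstrong's $\ZZ/sh\ZZ$-action.

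First I would evaluate the polynomial at roots of unity. Substituting $m=sh+1$ into the product formula \eqref{W-q-Catalan-definition} gives
\begin{equation*}
\Cat(W,sh+1;q) \;=\; \prod_{i=1}^{r}\frac{[sh+d_i]_q}{[d_i]_q}.
\end{equation*}
Since $d\mid sh$, one has $\omega_d^{sh+d_i}=\omega_d^{d_i}$. When $d\nmid d_i$ both $\omega_d^{sh+d_i}-1$ and $\omega_d^{d_i}-1$ are equal and nonzero, so the $i$-th factor equals $1$; when $d\mid d_i$ both vanish simply, and L'H\^opital applied to $(q^{sh+d_i}-1)/(q^{d_i}-1)$ yields the factor $(sh+d_i)/d_i$. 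Hence
\begin{equation*}
\Cat(W,sh+1;\omega_d) \;=\; \prod_{i\,:\,d\mid d_i}\frac{sh+d_i}{d_i}.
\end{equation*}

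Next, to count the fixed multichains, I would pass to the factorization model. Bessis's bijection identifies $NC^{(s)}(W)$ with the set of reduced factorizations $(u_0,u_1,\ldots,u_s)$ of the Coxeter element $c$, meaning $u_0 u_1\cdots u_s = c$ and $\sum_i \ell_T(u_i)=\ell_T(c)=r$, via $w_j := u_0 u_1\cdots u_j$. Under this bijection, a generator of the $\ZZ/sh\ZZ$-action becomes (up to a conjugation twist by $c$) a cyclic rotation of the factorization, so fixed points of an element of order $d$ correspond to factorizations whose orbit has length dividing $sh/d$. I would then invoke Springer's theory of regular elements: a regular $w\in W$ with $V$-eigenvalue $\omega_d$ exists precisely when $d$ divides some $d_i$, and when it does, Springer's theorem asserts that $C_W(w)$ acts on $V^w$ as a complex reflection group whose degrees are $\{d_i : d\mid d_i\}$. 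A rotation-fixed factorization of $c$ collapses, by taking the product over one period, to a reduced factorization of a Coxeter-like element inside $C_W(w)$. The Catalan-type count for $C_W(w)$, with parameter chosen so that its largest degree plays the role of $h$, then reproduces $\prod_{i:\,d\mid d_i}(sh+d_i)/d_i$, matching the first computation.

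The main obstacle is this last step: making the correspondence between rotation-fixed factorizations of $c$ and reduced factorizations inside $C_W(w)$ precise, and verifying the Catalan count in that centralizer. This is where Bessis's dual braid monoid is indispensable, since it provides an algebraic model (multiplication by a central element) in which the cyclic action becomes literal rotation, and in which Springer's regular elements and their parabolic-type centralizers can be located uniformly across all types. Once that framework is in place, the enumerative matching is essentially bookkeeping.
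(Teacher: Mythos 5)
You should note first that the paper you are working from does not prove Theorem~\ref{BR_CSP} at all: it is imported verbatim from \cite{BessisR}, so there is no in-paper argument to compare against. Measured against the actual proof in \cite{BessisR}, your outline follows essentially the same route. Your root-of-unity evaluation is correct: writing $\Cat(W,sh+1;q)=\prod_i [sh+d_i]_q/[d_i]_q$ and pairing numerator with denominator gives $\Cat(W,sh+1;\omega_d)=\prod_{i:\,d\mid d_i}(sh+d_i)/d_i$, exactly as in the cited proof. The second half of your plan --- identifying $NC^{(s)}(W)$ with length-additive factorizations of a Coxeter element, recognizing the $\ZZ/sh\ZZ$-action as (twisted) rotation, and matching rotation-fixed factorizations with noncrossing data for the centralizer $C_W(w)$ of a $\zeta_d$-regular element, whose degrees are $\{d_i:\,d\mid d_i\}$ by Springer --- is precisely the strategy of Bessis--Reiner, and the step you flag as the ``main obstacle'' is exactly the deep input they take from the dual braid monoid and from \cite{Bessis2}. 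Be aware of two points if you want to make this self-contained: the centralizer $C_W(w)$ is in general a \emph{complex} (well-generated) reflection group, not a real one, so the enumerative matching requires the Fuss--Catalan/Chapoton formula in that generality, not just formula \eqref{W-q-Catalan-definition} for Weyl groups; and the case where $d$ divides no degree (empty product, value $1$) still needs the fixed-point count to come out to $1$, which again is handled by that machinery rather than by direct inspection. So your proposal is a faithful outline of the known proof, complete only modulo the results of \cite{Bessis2} and \cite{BessisR} that it invokes.
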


In \cite[\S 6]{BessisR}, it was asked how to produce $q$-Kreweras numbers, 
$\Krew(\Phi,X,m;q)$, polynomials that would evaluate
to the Kreweras numbers $\Krew(W, [X], m)$ at $q=1$,
but more generally would have the following property:
$\Krew(\Phi, X,m; q=\omega_{d})$ counts
the elements $w_1 \leq \cdots \leq w_s \in NC^{(s)}(W)$ 
with $V^{w_1} \in [X]$  and which are additionally fixed under the action
an element of order $d$ in the $\ZZ/sh\ZZ$-action.
Such a result would generalize Theorem \ref{BR_CSP}.

\subsection{The $q$-Kreweras numbers}
In work of the second author \cite{Sommers2},
a polynomial in the variable $q$, denoted $f_{e,\phi}(m;q)$,
 is introduced for a nilpotent element $e \in \ggg$, an irreducible representation $\phi$ of 
the component group of $e$ arising in the Springer correspondence, 
and a very good $m$  (see \S \ref{general-Kreweras-definition-section}).  
The definition involves a graded version of the representation in \eqref{ungraded_rep} and only depends on the nilpotent orbit $\0$ containing $e$.  

Given $X$ as before,  the centralizer in $\mathfrak g$ of $X \subset \mathfrak h$ is a Levi subalgebra, denoted $\mathfrak l_X$, which contains $\mathfrak h$ and whose Weyl group identifies with $W_X$.  
We write $\0_X$ for the unique nilpotent orbit which contains elements that are principal nilpotent in $\mathfrak l_X$.
The definition of $\0_X$ only depends on $[X]$.  We say $\0_X$, and each of the elements it contains, is principal-in-a-Levi. 

Now when $e \in \0_X$, then $f_{e, \phi}(m; 1) = \Krew(W,[X],m)$.
Setting $\phi$ to be trivial, we also have 
$$\Cat(W,m;q) = \sum f_{e, 1}(m;q),$$  
where the sum is over a set of representatives $e$ from each nilpotent orbit.
These two results led us to the following definition of $q$-Kreweras numbers for each nilpotent orbit $\0$
$$\krew := f_{e,1}(m ;q) \text{ where } e \in \0$$
and to conjecture
\begin{conjecture}
\label{CSP-conjecture}
For $m = sh+1$,
and for each $W$-orbit $[X]$,
$\Krew(\Phi,\0_X,m;q=\omega_{d})$ counts
those 
$$w_1 \leq \cdots \leq w_s \in NC^{(s)}(W)$$
which are fixed under the action an element of order $d$ in the $\ZZ/sh\ZZ$-action
and have $V^{w_1} \in [X]$.
\end{conjecture}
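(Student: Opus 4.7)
The plan is to establish this conjecture in the classical types $A,B,C,D$ by a direct term-by-term comparison of the two sides as functions of $d$, making use of the explicit product formulas for $\krew$ established in Theorem \ref{q-Kreweras-formula-theorem}. Theorem \ref{BR_CSP} already gives the unrefined equality summed over all $[X]$, so what is needed is to check the refinement according to the $W$-orbit type of $V^{w_1}$ on each side.

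The first step, in each classical type, is to evaluate $\krew$ at $q=\omega_d$. The formulas of Theorem \ref{q-Kreweras-formula-theorem} express $\krew$ as a ratio of products of $q$-integers; evaluation at a primitive $d$-th root of unity proceeds by the standard principle that $[n]_q|_{q=\omega_d}$ vanishes when $d \nmid n$ and otherwise reduces in a controlled way. The outcome is an integer-valued count built from the ``reduced'' data obtained by taking $\ZZ/d\ZZ$-orbits of the parts of the partition indexing $[X]$.

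The second step works on the combinatorial side, using the available models for $NC^{(s)}(W)$: noncrossing set partitions in type $A$ with the Rhoades description of the $\ZZ/sh\ZZ$-action; Reiner's noncrossing partitions of type $B$ for both types $B$ and $C$; and the Athanasiadis--Reiner model in type $D$. In each case, the fixed points of an order-$d$ element of $\ZZ/sh\ZZ$ should be parametrized, via a quotienting construction, by noncrossing structures on a smaller ground set of size $\asymp 1/d$ of the original. Under this identification, the constraint $V^{w_1}\in[X]$ descends to a constraint on the block-size pattern of the smaller noncrossing partition, which then matches the reduced data appearing in Step~1.

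The main obstacle will be matching the two sides precisely, including their vanishing patterns. When $[X]$ is incompatible with $d$, the product formula for $\krew$ must evaluate to zero, mirroring the combinatorial fact that no fixed multichain has its bottom element's fixed space in $[X]$; verifying this requires a careful divisibility analysis of the Orlik--Solomon exponents $m_i(X)$ against $d$ in each type, and is the step where the ``very good'' condition on $m$ earns its keep. The exceptional types $E,F,H$ would remain beyond reach by this approach, since even the $q=1$ refinement of Theorem \ref{BR_CSP} by orbit type is not yet established there, as noted just above the statement of the conjecture.
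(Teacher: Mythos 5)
Your overall strategy is the one the paper actually follows: evaluate the explicit formulas of Theorem~\ref{q-Kreweras-formula-theorem} at $q=\omega_d$ (Proposition~\ref{evaluations-prop}), and compare with a count of the $d$-symmetric multichains in the classical combinatorial models, reducing in types $A,B,C$ to Athanasiadis's block-size counts for type $B$ noncrossing partitions \cite{Athanasiadis-block-sizes}. Two points in your plan, however, are genuine gaps rather than routine details. First, the type $D$ model you name is not the right one: the Athanasiadis--Reiner description covers $NC(D_n)$, i.e.\ $s=1$, whereas for $NC^{(s)}(D_n)$ one needs the annular noncrossing partitions of Krattenthaler--M\"uller \cite{KrattenthalerMuller}, corrected by Kim's determinacy condition \cite{Kim1}, with the cyclic action as described by Rhoades \cite{Rhoades}; without the strong $s$-divisibility and determinacy conditions the fixed-point enumeration comes out wrong. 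Second, your guiding principle that order-$d$ fixed points are ``parametrized by noncrossing structures on a ground set of size $\asymp 1/d$'' is valid in types $A,B,C$ but fails in type $D$: the generator rotates the outer boundary of the annulus clockwise and the inner boundary counterclockwise, so for instance the order-$2$ element acts as $\iota$ on outer labels but as $\iota^{n-1}$ on inner labels, and for $d\geq 3$ being fixed is equivalent to $d$-fold rotational symmetry \emph{together with} $d\mid n$ (Lemma~\ref{counterclockwise-becomes-clockwise-lemma}). The resulting counts split into cases (zero block present or absent, all inner labels in traversing blocks versus two entirely inner blocks, Proposition~\ref{NCD-traversing-subtlety}) and in several of them are sums or differences of two binomial products (matching the factor $1+(-1)^{2n/d}$, or the identity \eqref{easily-checked-identity}), not a single quotient count; no uniform quotienting construction produces these.

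A smaller but still necessary point: evaluating the formulas at $\omega_d$ is not only a matter of which $q$-factors vanish. The monomial prefactors such as $q^{m(n-\ell(\lambda))-c(\lambda)}$ and $q^{\BCDexponent+\cdots}$ must be shown to equal $1$ (not merely have modulus $1$) at $q=\omega_d$ whenever the remaining factor is nonvanishing; this requires the congruences for $c(\lambda)$ modulo $d$ in Lemma~\ref{c(lambda)-congruences-lemma} and the case analysis of Proposition~\ref{q-powers-become-plus-1}, and in type $D$ the limiting values can also carry signs like $(-1)^n$, so ``an integer-valued count built from the reduced data'' is only correct after this verification. Your plan should make these checks explicit before the term-by-term comparison can close.
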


%\footnote{point out that B/C the Krew are different, but values at these roots coincide.  maybe show examples??}

\subsection{Results}

\subsubsection{Formulas for the $q$-Kreweras numbers}
The formulas for $f_{e, \phi}$ for general $\phi$ in the classical groups are given in Propositions~\ref{BC_most_general} and \ref{D_most_general}.   The formulas for $f_{e, \phi}$ in the exceptional groups are tabulated in Section~\ref{Exceptional_calcs}.
%The $f_{e,\phi}$ are computed in all cases in Section ~\ref{proof-of-Kreweras-formulas-section}.
Theorem~\ref{q-Kreweras-formula-theorem} below 
summarizes the formulas for the $q$-Kreweras numbers (that is, $\phi$ trivial)
in the classical types.

Recall that the nilpotent orbits in the classical Lie algebras 
can be parametrized by number partitions $\lambda$ 
obeying certain restrictions by considering the defining representation of $\ggg$ 
and taking the Jordan form for an element in the orbit.
For such a partition $\lambda$, we write $\0_\lambda$ for the corresponding orbit in $\ggg$.
%$\lambda=(\lambda_1,\ldots,\lambda_\ell)$ 
%with $\lambda_1 \geq \cdots \geq \lambda_\ell > 0$ obeying certain restrictions.
%Let $\mu(\lambda):=(\mu_1(\lambda),\mu_2(\lambda),\ldots)$ 
%where $\mu_j$ is the multiplicity of
%the part $j$ in $\lambda$, so that $\ell=:\ell(\lambda)=\sum_j \mu_j(\lambda)$.

\begin{itemize}
\item In type $A_{n-1}$, that is, $\ggg=\mathfrak{sl}_n(\CM)$, 
nilpotent orbits are parametrized by 
all partitions $\lambda$ of $n$; 
as before, denote this set of partitions
$\Par(n)$.

\item In type $B_{n}$, that is, $\ggg=\mathfrak{so}_{2n+1}(\CM)$, 
nilpotent orbits are parametrized by 
partitions $\lambda$ of $2n+1$ 
having $\mu_j(\lambda)$ even for $j$ even;
denote this set of partitions 
$\Par_B(2n+1)$. 

\item In type $C_{n}$, that is, $\ggg=\mathfrak{sp}_{2n}(\CM)$, 
nilpotent orbits are parametrized by 
all partitions $\lambda$ of $2n$ 
having $\mu_j(\lambda)$ even for $j$ odd;
denote this set of partitions
$\Par_C(2n)$.

\item In type $D_{n}$, that is, $\ggg=\mathfrak{so}_{2n}(\CM)$, 
nilpotent orbits under the orthogonal group $\mbox{O}_{2n}(\CM)$ are
parametrized by partitions $\lambda$ of $2n$ having
$\mu_j(\lambda)$ even for $j$ even;
denote this set of partitions
$\Par_D(2n)$.   
We denote by $\0_{\lambda}$ the orbit under $\mbox{O}_{2n}(\CM)$.
Then $\0_{\lambda}$ is a single $\mbox{SO}_{2n}(\CM)$-orbit unless
$\lambda$ has only even parts, in which case 
$\0_{\lambda}$ splits into two orbits under $\mbox{SO}_{2n}(\CM)$.
Both of these orbits have the same 
$q$-Kreweras numbers, given by $1/2$ times the formula shown in
Theorem~\ref{q-Kreweras-formula-theorem}(Type $D_n$).
\end{itemize}

For the classical groups of types $A,B,C,D$ 
the polynomial $\krew$ is expressed using {\it $q$-multinomials} which are defined as
follows:  for $\nu=(\nu_1,\ldots,\nu_t) \in \NN^{t}$ with 
$|\nu|:=\sum_i \nu_i \leq n$, let
$$
\qbin{n}{\nu}{q} :=\qbin{n}{\nu,n-|\nu|}{q} :=
\frac{[n]!_q}{[\nu_1]!_q \cdots [\nu_t]!_q [n-|\nu|]!_q },
$$
where $[n]!_q := [n]_q [n-1]_q \cdots [1]_q$,
and define the left side to be zero whenever $|\nu| > n$.

Letting $\lambda^\prime$ denote the {\it conjugate} or {\it transpose} partition of $\lambda$,
define 
$$
c(\lambda) :=\sum_{j} \lambda^\prime_j \lambda^\prime_{j+1}.
$$

\begin{theorem}
\label{q-Kreweras-formula-theorem}
(Type $A_{n-1}$)
For $\lambda \in \Par(n)$ and 
%for  very good $m$, that is, $\gcd(m,n)=1$, one has
for $\gcd(m,n)=1$, one has
$$
\Krew(A_{n-1}, \0_\lambda,m;q)
=q^{m(n-\ell(\lambda))-c(\lambda)} 
\frac{1}{[m]_q}\qbin{m}{\mu(\lambda)}{q}.
$$
\end{theorem}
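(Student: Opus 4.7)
The plan is to specialize Sommers's definition of $f_{e,1}(m;q)$ from Section~\ref{general-Kreweras-definition-section} to the type $A_{n-1}$ setting and then carry out a direct $q$-analog computation.

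First, I would identify the Springer-theoretic data. Since every centralizer of a nilpotent element in $\mathfrak{sl}_n(\CM)$ is connected, all component groups are trivial, so $\phi = 1$ is the only local system and $\Krew(A_{n-1}, \0_\lambda, m;q) = f_{e,1}(m;q)$ for $e \in \0_\lambda$. Every nilpotent orbit in type $A$ is principal-in-a-Levi: $\0_\lambda = \0_{X_\lambda}$, where $W_{X_\lambda}$ is the Young subgroup $S_{\lambda_1} \times \cdots \times S_{\lambda_{\ell(\lambda)}}$ and $\dim X_\lambda = \ell(\lambda)-1$. The induced arrangement on $X_\lambda$ is a braid arrangement on $\ell(\lambda)$ points, with Orlik--Solomon exponents $1, 2, \ldots, \ell(\lambda)-1$; hence $p_{X_\lambda}(t) = (t-1)(t-2)\cdots(t-\ell(\lambda)+1)$ and $[N(W_{X_\lambda}) : W_{X_\lambda}] = \prod_j \mu_j(\lambda)!$.

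Next I would unfold Sommers's $q$-graded refinement of the summand $\frac{p_{X_\lambda}(m)}{[N(W_{X_\lambda}) : W_{X_\lambda}]}$ from \eqref{ungraded_sum_Cat}, in which each linear factor $(m-i)$ is replaced by $[m-i]_q$ and each factorial $\mu_j!$ by $[\mu_j]!_q$. This should yield a formula of the shape
$$
f_{e,1}(m;q) \;=\; q^{a(\lambda,m)} \cdot \frac{\prod_{i=1}^{\ell(\lambda)-1}[m-i]_q}{\prod_{j} [\mu_j(\lambda)]!_q}
$$
for a grading shift $a(\lambda,m)$ to be determined. Using $\sum_j \mu_j(\lambda) = \ell(\lambda)$ together with the identity $[m-1]!_q/[m-\ell(\lambda)]!_q = \prod_{i=1}^{\ell(\lambda)-1}[m-i]_q$ and the definition of the $q$-multinomial, the fraction above simplifies to $\frac{1}{[m]_q}\qbin{m}{\mu(\lambda)}{q}$, reducing the theorem to the identification $a(\lambda,m) = m(n-\ell(\lambda)) - c(\lambda)$.

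The third and most delicate step is to pin down the exponent $a(\lambda,m)$. This shift has two natural components: a linear-in-$m$ contribution tied to the degree at which the principal nilpotent in the Levi $\levi_{X_\lambda}$ enters the graded Springer filtration, and a correction coming from the $b$-invariant of the Springer character of $S_n$ indexed by $\lambda$. In type $A$ the Green polynomials are governed by modified Hall--Littlewood polynomials, for which all grading shifts have explicit closed forms, so tracking these through the Lusztig--Shoji algorithm should produce $m(n-\ell(\lambda)) - c(\lambda)$; here the decomposition $m(n-\ell(\lambda)) = m\sum_{j\geq 2}\lambda'_j$ accounts for the non-trivial columns of $\lambda$ while subtracting $c(\lambda) = \sum_j \lambda'_j\lambda'_{j+1}$ accounts for overlaps between adjacent columns. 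The main obstacle is precisely this bookkeeping of grading conventions; as consistency checks, at $q=1$ the formula reduces to the known Kreweras number $\frac{(m-1)(m-2)\cdots(m-\ell(\lambda)+1)}{\prod_j \mu_j(\lambda)!}$, and summing over $\lambda \in \Par(n)$ must yield $\Cat(A_{n-1}, m; q) = \frac{1}{[n]_q}\qbin{m+n-1}{n-1}{q}$ by \eqref{q-Kreweras-sum-to-q-Cat}, a standard Gaussian-binomial identity that provides an independent cross-check.
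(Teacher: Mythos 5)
Your setup is correct and matches the paper's: in type $A$ all component groups are trivial, every orbit is principal-in-a-Levi, the exponents are $1,2,\ldots,\ell(\lambda)-1$, and the normalizer index is $\prod_j \mu_j(\lambda)!$. The problem is that from there you never actually compute $f_{e,1}(m;q)$ from its definition; you assert that Sommers's graded refinement ``should yield'' a formula of the shape $q^{a(\lambda,m)}\prod_{i=1}^{\ell(\lambda)-1}[m-i]_q / \prod_j[\mu_j(\lambda)]!_q$ and then propose to pin down $a(\lambda,m)$ by ``tracking grading conventions'' through Hall--Littlewood polynomials, $b$-invariants, and the Lusztig--Shoji algorithm. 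Neither step is carried out, and the second is essentially a restatement of the theorem rather than a proof: the entire content of the type-$A$ case is precisely the determination of that power of $q$ together with the denominator $\prod_j[\mu_j]!_q$. The definition of $f_{e,1}$ is not a formal $q$-deformation of $p_X(m)/[N(W_X):W_X]$; it is the explicit formula \eqref{formula:f_easy}, whose nontrivial input is the order of the centralizer $|Z_{G^F}(e)|$ of a rational nilpotent element over $\FF_q$. The paper's proof computes this directly: $Z_{G^F}(e)\cong \FF_q^{d^u}\times\prod_j GL_{\mu_j}(\FF_q)$ gives $|Z_{G^F}(e)|=q^{d^u}(q-1)^{\ell(\lambda)}\prod_j[\mu_j]!_q$, and the elementary identity $d^u=\sum(\lambda_i')^2-\sum\binom{\mu_j+1}{2}=\binom{\ell(\lambda)}{2}+c(\lambda)$ is exactly what produces the exponent $m(n-\ell(\lambda))-c(\lambda)$ after combining with the prefactor $q^{m(r-\kappa)+\sum m_j}$ (with $r=n-1$, $\kappa=\ell(\lambda)-1$, $m_j=j$) and cancelling the $(q-1)$ factors against $\prod_j(q^{m-j}-1)$. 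Your sketch contains none of this cancellation bookkeeping, and your heuristic reading of $m(n-\ell(\lambda))-c(\lambda)$ as ``columns minus overlaps'' is not an argument.

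Your consistency checks are fine as sanity tests but cannot close the gap: the $q=1$ specialization only recovers information the theorem is supposed to refine, and the summation to $\Cat(A_{n-1},m;q)$ is equation \eqref{q-Kreweras-sum-to-q-Cat}, which holds for any family of polynomials satisfying Sommers's decomposition and so cannot isolate the individual $q$-Kreweras numbers, let alone their $q$-power prefactors (note also that this power of $q$ is invisible at $q=1$, which is exactly where your checks live). To repair the proof you would either have to carry out the finite-field centralizer computation as the paper does, or genuinely execute the Green-function/Hall--Littlewood computation you allude to, which amounts to the same amount of work in different clothing.
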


\noindent
In types $B, C, D$, the formulas are similar, replacing various
parameters by roughly half their values.  Introduce the notation
$\hat{N}:=\lfloor N/2 \rfloor$ for $N \in \NN$, and
for %a sequence
$\nu=(\nu_1,\nu_2,\ldots)$, set
%$\hat{\mu}:=(\lfloor \mu_1/2 \rfloor, \lfloor \mu_2/2 \rfloor, \ldots)$.
$\hat{\nu}:=(\hat{\nu}_1, \hat{\nu}_2, \ldots)$.
%Let $\epsilon \in \{ 0 ,1 \}$. 
Using ``$a \equiv b$'' to abbreviate ``$a = b \bmod{2}$'',
define the following quantities
$$
\begin{aligned}
L(\lambda) &:= \# \{j \in \mathbb N : \mu_j(\lambda) \text{ odd}\}, \\
%\quad \text{ and } \quad 
\BCDexponent&
 :=m(n-\hat{\ell}(\lambda)) - \frac{c(\lambda)}{2}-\frac{L(\lambda)}{4},\\
\tau_{\epsilon}(\lambda) 
 &: =\frac{1}{2} \sum_{\substack{j \equiv \epsilon \\ \mu_j \equiv 0}}   \mu_j(\lambda) 
\text{ where }
\epsilon:=
   \begin{cases}0 & \text{ in type }C,\\
                1 & \text{ in types }B\text{ and }D.
   \end{cases}
\end{aligned}
$$

\vskip.1in
\noindent
{\bf Theorem \ref{q-Kreweras-formula-theorem}}
{\it
(Type $B_n$) 
For $\lambda \in \Par_B(2n+1)$ and 
for $m$ odd, one has
$$
\label{type-B-Kreweras-formula}
%\Krew(\0_\lambda,m;q) =
\Krew(B_n, \0_\lambda,m;q) = 
q^{
\BCDexponent
%m(n-\hat{\ell}(\lambda)) - \frac{c(\lambda)}{2}-\frac{L(\lambda)}{4} 
+ \tau_{1}(\lambda)+ \frac{1}{4}}
\prod_{i=1}^{\hat{L}(\lambda)} (q^{m-2i+1}-1)
\qbin{\hat{m}-\hat{L}(\lambda)}{\hat{\mu}(\lambda)}{q^2}
$$
}

For the type $C$ formula, additionally define
$$
\delta(\lambda):=
\begin{cases}
\frac{1}{4}-\frac{\ell(\lambda)}{2} &\text{ for }\ell(\lambda) \text{ odd,}\\
0 &\text{ for }\ell(\lambda) \text{ even.}
\end{cases}
$$

\vskip.1in
\noindent
{\bf Theorem \ref{q-Kreweras-formula-theorem}}
{\it
(Type $C_n$) 
For $\lambda \in \Par_C(2n)$ and 
for $m$ odd, one has 
$$
\label{type-C-Kreweras-formula}
\Krew(C_n, \0_\lambda,m;q) =
%\Krew(\0_\lambda,m;q) =
q^{\BCDexponent
% m(n-\hat{\ell}(\lambda)) - \frac{c(\lambda)}{2}-\frac{L(\lambda)}{4} 
   + \tau_{0}(\lambda) + \delta(\lambda)}
\prod_{i=1}^{\hat{L}(\lambda)} (q^{m-2i+1}-1)
\qbin{\hat{m}-\hat{L}(\lambda)}{\hat{\mu}(\lambda)}{q^2}.
$$
%where
%$\delta(\lambda):=\frac{1}{4}-\frac{\ell(\lambda)}{2}$
%for $\ell(\lambda)$ odd, and zero for $\ell(\lambda)$  even.
%$$
%\delta(\lambda)
%:=
%\begin{cases}
%0 & \text{ for }\ell(\lambda)\text{ even}, \\
%\frac{1}{4}-\frac{\ell(\lambda)}{2} 
%& \text{ for }\ell(\lambda)\text{ odd}.
%\end{cases}
%$$
}
\noindent
\vskip.1in

In type $D_n$, the multiplicity
$\mu_1(\lambda)$ of the part $1$ in $\lambda$ plays a special role,
and we also define 
$$\mu_{\geq 2}(\lambda):=(\mu_2(\lambda),\mu_3(\lambda),\ldots).$$

\vskip.1in
\noindent
{\bf Theorem \ref{q-Kreweras-formula-theorem}}
{\it
(Type $D_n$) 
For $\lambda \in \Par_D(2n)$
and $m$ odd,
% one has that
$\Krew(D_n, \0_\lambda,m;q)$ is 
$
q^{
\BCDexponent
%m(n-\hat{\ell}(\lambda)) -\frac{c(\lambda)}{2}-\frac{L(\lambda)}{4} 
   + \tau_{1}(\lambda)}
$ 
times this:
%\footnote{correcting earlier version: $\hat{L}(\lambda)$ was missing in second case in second multinomial-- Vic should double-check my proof}
$$ 
\begin{cases}
\displaystyle 
%q^{m-\frac{\ell(\lambda)}{2}+1}
q^{m-\hat{\ell}(\lambda)+1}
\prod_{i=1}^{\hat{L}(\lambda)-1} (q^{m-2i+1}-1) \cdot
\qbin{\hat{m}+1 -\hat{L}(\lambda)}{\hat{\mu}(\lambda)}{q^2} 
& \text{if }\mu_1(\lambda) \text{ is odd},\\
& \\
\displaystyle 
q^{\hat{\ell}(\lambda)-\mu_1(\lambda)}
\prod_{i=1}^{\hat{L}(\lambda)} (q^{m-2i+1}-1) \cdot 
\qbin{\hat{m}-\hat{L}(\lambda)}{\hat{\mu}_{\geq 2}(\lambda)}{q^2} 
\qbin{\hat{m}+1 - \hat{L}(\lambda)-|\hat{\mu}_{\geq 2}(\lambda)|}
     {\hat{\mu}_1(\lambda)} {q^2} 
%&\mu_1(\lambda)\text{ even, but some }\mu_j(\lambda)\text{ odd},\\
&\text{if } \mu_1(\lambda)\text{ even and } \hat{L}(\lambda) \geq 1,\\ 
& \\
q^{\hat{\ell}(\lambda)-\tau_1(\lambda)}
  \qbin{\hat{m}}{\hat{\mu}(\lambda)}{q^2}  + 
q^{\hat{\ell}(\lambda) -\mu_1(\lambda)}
  \qbin{\hat{m}}{\hat{\mu}_{\geq 2}(\lambda)}{q^2} 
\qbin{\hat{m}+1-|\hat{\mu}_{\geq 2}(\lambda)|}
     {\hat{\mu}_1(\lambda)} {q^2} 
%& \text{if } \mu_j(\lambda) \text{ all even}. \\
& \text{if } \hat{L}(\lambda) = 0 . \\
\end{cases}
$$
}

%The formulas for $f_{e, \phi}$ when $\phi$ is nontrivial in the classical groups, are given in Propositions~\ref{BC_most_general} and \ref{D_most_general}.   The formulas for $f_{e, \phi}$ in the exceptional groups are tabulated in Section~\ref{Exceptional_calcs}.

\subsubsection{Divisibility and positivity properties of the $q$-Kreweras numbers}

Using our explicit formulas for the $f_{e,\phi}$, we gather 
some of their properties in the following 
Theorem~\ref{divisibility-and-nonnegativity-theorem_all}.  Its statement 
will be slightly less precise for a fairly short list of 
ill-behaved nilpotent  orbits occurring inside the 
exceptional types $F_4, E_6, E_7,$ and $E_8$,
given here by their Bala-Carter notation \cite{Carter}:
%(see Section~\ref{Exceptional_calcs}):
\begin{equation}
\label{ill-behaved-orbits}
F_4(a_3), \,
E_6(a_3), \,
E_6(a_3)\!+\!A_1, \, 
E_7(a_5), \,
E_7(a_3), \,
E_8(a_7), \,
E_8(a_6), \,
E_8(b_5), \, 
E_8(a_4), \,
E_8(a_3).
\end{equation}
Let $R = \rank(Z_G(e))$.
Recall that $e$ is principal-in-a-Levi if $e \in \0_X$ for some $X$.
We will denote by $H^*(\BBB_e)$ the cohomology of the Springer fiber 
for $e \in \0$, regarded as a $W$-representation,
which will play a central role in the definition of the $q$-Kreweras
numbers in \S \ref{general-Kreweras-definition-section}.

\begin{theorem}
\label{divisibility-and-nonnegativity-theorem_all}
Let $e$ be a nilpotent element {\bf not} among the 
ill-behaved orbits from \eqref{ill-behaved-orbits},
and assume that $f_{e,\phi}$ is not identically zero.
Then there exists $L, c \in \NN$, independent of $\phi$, such that
$$f_{e,\phi}(m ;q) = \prod^{L}_{j=1} (q^{m+1-2j} -1) \cdot q^{cm} \cdot g_{\phi}(m; q),$$
where $g_{\phi}(m;q)$ is the sum of at most two products of the form $q^{-z} \displaystyle \prod^{R}_{i=1} \tfrac{[m-a_i]_q}{[b_i]_q}$ for some $a_i, b_i, z \in \NN$.
Moreover,
\begin{enumerate}
\item[(i)] For each very good $m$, the polynomial $q^{cm} \cdot g_{\phi}(m; q)$ 
lies in $\NN[q]$.
\item[(ii)] The rank $r$ of $\ggg$ equals $L + c + R$.
\item[(iii)] The multiplicity of $V$ in the $W$-representation $H^*(\BBB_e)$ 
is $r-c$.
\item[(iv)] If $e$ is principal-in-a-Levi, then $L=0$. 
In particular, $f_{e,\phi}(m;q) \in \NN[q]$ for each very good $m$. 
\item[(v)] If $e$ is not principal-in-a-Levi, then $L \geq 1$.  In the exceptional types it always happens that $L=1$.
\end{enumerate}

Even when $e$ is one of the ill-behaved orbits from
\eqref{ill-behaved-orbits}, if one further specializes
to the case $\phi=1$, then the polynomial $f_{e,1}(m;q)$ 
is always nonzero,
and still has properties (i),(ii),(iv),(v) listed above\footnote{
Examination of the tables in Section~\ref{Exceptional_calcs} 
shows that for $e$ in an ill-behaved nilpotent orbit,
for certain $\phi \neq 1$ one still has a factorization of
$f_{e,\phi}(m;q)$ as in the theorem, 
but with $-g_\phi(m;q)$ in $\NN[q]$.  Also, for such $e$,
property (iii) fails even if $\phi=1$.  Instead, the value $r-c$ is the multiplicity of
$W$ in the $A(e)$-invariants in $H^*(\BBB_e)$.}. 
\end{theorem}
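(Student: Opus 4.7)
The plan is to prove the theorem by direct inspection of the explicit formulas for $f_{e,\phi}(m;q)$ established earlier: in the classical types, from Propositions~\ref{BC_most_general} and~\ref{D_most_general}, and in the exceptional types, from the tables of Section~\ref{Exceptional_calcs}. From each formula, I would first read off $L$ as the number of linear factors $(q^{m+1-2j}-1)$ that appear, $c$ as the coefficient of $m$ in the leading $q$-exponent (which is visibly independent of $\phi$, depending only on the orbit $\0$), and $g_\phi(m;q)$ as the residual factor. In every case that residual is either a single $q$-multinomial coefficient or a sum of two, each of which expands as a product $q^{-z}\prod_{i=1}^{R}\frac{[m-a_i]_q}{[b_i]_q}$ of the shape required in the statement.

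With $L$, $c$, and $g_\phi$ in hand, the proof splits across the five assertions. For (i) I would verify that the shift by $q^{cm}$ exactly absorbs the negative exponents arising from the $q^2$-multinomial factors; in types $A$, $B$, $C$ this is a short $q$-binomial computation, while type $D$, where $g_\phi(m;q)$ is a sum of two pieces, requires a $q$-Vandermonde style identity to show that the combined sum lies in $\NN[q]$. Properties (iv) and (v) reduce, in the classical types, to the combinatorial observation that $\hat L(\lambda)=0$ if and only if $\lambda$ satisfies the principal-in-a-Levi condition; for the exceptional types both follow by inspection of the tables, including the refined claim that $L=1$ whenever $L>0$. The rank identity (ii) is then a bookkeeping check: express $R=\rank Z_G(e)$ via standard partition formulas for the classical types, identify $L$ with $\hat L(\lambda)$, and confirm that the remaining exponent data in $\BCDexponent$ and its type-$A$ analogue yield $c=r-L-R$.

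The main obstacle I anticipate is (iii), which identifies $r-c$ with the multiplicity of the reflection representation $V$ inside $H^*(\BBB_e)$. Although one can check (iii) case-by-case after (ii) by comparing with known Springer-theoretic formulas for $\langle V, H^*(\BBB_e)\rangle_W$, the conceptually cleanest route is to extract (iii) from the Lusztig--Shoji definition of $f_{e,\phi}$ recalled in Section~\ref{general-Kreweras-definition-section}: the isotypic piece of $V$ inside the graded $W$-module $H^*(\BBB_e)$ contributes to $f_{e,\phi}(m;q)$ a factor whose $q$-degree grows linearly in $m$ with slope exactly $r-c$, so tracking the top $q$-power in the formula identifies the multiplicity. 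Finally, for the ill-behaved orbits~\eqref{ill-behaved-orbits} at $\phi=1$ I would inspect the corresponding table entries and check (i), (ii), (iv), (v) directly, noting that (iii) is replaced in that case by the $A(e)$-invariant variant recorded in the footnote.
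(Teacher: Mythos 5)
Your extraction of $L$, $c$, and $g_\phi$ from the explicit formulas, and your handling of (ii), (iv), (v) and of the ill-behaved orbits at $\phi=1$, follow the paper's route (one small slip: in type $D$ the principal-in-a-Levi orbits are those with $\hat{L}(\lambda)=0$ \emph{or} $\hat{L}(\lambda)=1$ with $\mu_1$ odd, and in the latter case the product runs to $\hat{L}(\lambda)-1$, which is why $L=0$ there). Your route to (iii) is also essentially the paper's: from \eqref{formula:f} one gets $c=r-(\kappa-1+d_\kappa)$, and $\kappa-1+d_\kappa$ is, by \eqref{nilpotent-orbit-exponents-definition}, the multiplicity of $V$ in $H^*(\BBB_e)$; the point you should make explicit is that reading off $c$ as the slope of the lowest $q$-power requires the nonvanishing of $\sum_c \wedge^{d_\kappa}\pi_\kappa(c)\,\phi(c)/|Z_{G^F}(e_c)|$, which is precisely what fails for the orbits in \eqref{ill-behaved-orbits} and is why they are excluded from (iii).

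The genuine gap is in (i). In type $A$, positivity is not a matter of $q^{cm}$ absorbing negative exponents: one must show $\frac{1}{[m]_q}\qbin{m}{\mu(\lambda)}{q}\in\NN[q]$, which is false without $\gcd(m,n)=1$ and is obtained in the paper by noting that coprimality forces the $\mu_j(\lambda)$ together with $m$ to have trivial common divisor and then invoking \cite[Cor.~10.4]{RSW}; no short $q$-binomial manipulation yields this. (In types $B,C,D$ the principal-in-a-Levi formulas are powers of $q$ times $q$-multinomials, or sums of two such, so no $q$-Vandermonde identity is needed there.) More seriously, your plan says nothing about (i) in the exceptional types, where $g_\phi$ is a quotient $q^{-z}\prod_i [m-a_i]_q/[b_i]_q$ (or a sum of two such) that is not manifestly in $\NN[q]$. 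This is the bulk of the paper's proof of (i): some cases reduce to $\Cat(W',m;q)$ for a smaller Weyl group, and the rest are handled by fixing the congruence class of $m$ modulo $\lcm(b_i)$ and applying the Krattenthaler--M\"uller criterion \cite[Cor.~6]{KrattenthalerMuller2}, carried out by a computer program; for the two-term entries (e.g.\ $A_2$ in $E_8$) there are congruence classes in which neither summand alone is even a polynomial, so the two terms must be combined before positivity can be argued. Without an argument of this kind (or a genuinely uniform replacement, which the paper states it does not have), assertion (i) remains unproved outside the classical types.
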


%%%%  The old Theorem 1.7 ...
\begin{comment}
A slightly modified statement holds for the remaining orbits.  For simplicity we state a version of the theorem that holds for any $e$ when $\phi=1$.
\begin{theorem}
\label{divisibility-and-nonnegativity-theorem}
The $f_{e,1}$ are always nonzero and 
$$f_{e,1}(m ;q) = \prod^{L}_{j=1} (q^{m+1-2j} -1) \cdot q^{cm} \cdot g(m; q)$$
for some $L, c \in \NN$ and where $g(m;q)$ is the sum of at most two products of the form $q^{-z} \displaystyle \prod^{R}_{i=1} \tfrac{[m-a_i]_q}{[b_i]_q}$ for some $a_i, b_i, z \in \NN$.
Moreover,
\begin{itemize}
\item For each very good $m$, the polynomial $q^{cm} \cdot g(m; q)$ lies in $\NN[q]$.
\item The rank $r$ of $\ggg$ equals $L + c + R$.
\item If $e$ is principal-in-a-Levi, then $L=0$. %$f_{e,\phi}(m ;q)  = q^{cm} \cdot  g(m; q)$.
In particular, $f_{e,1}(m;q) \in \NN[q]$ for each very good $m$. 
\item If $e$ is not principal-in-a-Levi, then $L \geq 1$.  
In particular, $f_{e,1}$, as polynomial in $q$, is divisible by $q^{m-1}-1$ for each very good $m$.  In the exceptional types it turns out that $L$ is always equal to $1$.
\end{itemize}
\end{theorem}
\end{comment}
%%%%%%%%%%%%%%%%%%%%

It follows that the $q$-Kreweras numbers $f_{e,1}(m;q)$
are never identically zero, that they have nonnegative coefficients 
as polynomials in $q$ if $e$ is principal-in-a-Levi, and are divisible by $q^{m-1}-1$ otherwise.   
These facts are used in establishing the cyclic sieving property.
The proof of Theorem~\ref{divisibility-and-nonnegativity-theorem_all} 
is given in Section \ref{proof-of-divisibility-and-nonnegativity-section}.

\subsubsection{Cyclic sieving}

We also show that the cyclic sieving property holds in the classical types in Section \ref{proof-of-CSP-theorem-section}.

\begin{theorem}
\label{CSP-theorem}
Conjecture~\ref{CSP-conjecture}
holds in all of the classical types $A,B,C,D$.
\end{theorem}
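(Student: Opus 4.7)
The plan is to reduce Conjecture~\ref{CSP-conjecture} in the classical types to a direct comparison between the specialization $\krew\!\mid_{q=\omega_d}$ (for $m = sh+1$ and $d \mid sh$) and an enumerative count of rotation-fixed noncrossing multichains of a given $[X]$-type. In each of the four classical types, both quantities are accessible: the left-hand side via our explicit formulas of Theorem~\ref{q-Kreweras-formula-theorem}, and the right-hand side via the combinatorial models for $NC^{(s)}(W)$ developed by Armstrong \cite{Armstrong1}, Athanasiadis--Reiner \cite{AthanasiadisReiner}, and Rhoades \cite{Rhoades}, together with Rhoades' CSP refinement.

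For the left-hand side I would invoke the classical $q$-Lucas theorem: the factor $\qbin{\hat m - \hat L(\lambda)}{\hat \mu(\lambda)}{q^2}$ (and its type $A$ and $D$ analogues) evaluated at $q = \omega_d$ reduces to an ordinary multinomial in the quotients $\lfloor \hat\mu_j(\lambda) / d' \rfloor$, where $d'$ is the order of $\omega_d^2$, and vanishes unless the parts respect the appropriate divisibility. The linear factors $\prod_{i=1}^{\hat L(\lambda)} (q^{m-2i+1} - 1)$ in types $B, C, D$ vanish unless $d \mid m - 2i + 1$ for each $i$, so the left-hand side is nonzero only when the multiplicity pattern of $\lambda$ is compatible with $d$. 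The overall $q$-power coming from $\BCDexponent$ and the auxiliary exponents $\tau_\epsilon(\lambda), \delta(\lambda)$ specializes to a root of unity that, after careful bookkeeping, must be $+1$ for the matching to hold; checking this normalization is one of the routine steps.

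For the right-hand side I would use the classical models: in type $A_{n-1}$ one has Edelman's $s$-divisible noncrossing partitions of $\{1,\ldots,sn\}$ with the $\ZZ/sh\ZZ$-action realized by rotation, while in types $B, C, D$ one uses centrally-symmetric noncrossing partitions of $\{\pm 1, \ldots, \pm n\}$. Rotation-fixed configurations of a prescribed block-type $\lambda$ admit a product enumeration: blocks split into ``free orbits'' under the cyclic action (counted by ordinary multinomials, matching the $q$-Lucas reduction) and ``axis-crossing'' or fixed blocks (counted by the factors $(q^{m-2i+1}-1)\!\mid_{q=\omega_d}$). The matching is completed by a Burnside/orbit-count argument, aided by Springer's theorem on regular elements to identify which power of the Coxeter element acts with order $d$ on $NC(W)$.

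The main obstacle will be the type $D_n$ case. Our formula for $\Krew(D_n, \0_\lambda, m; q)$ has three subcases, according to the parity of $\mu_1(\lambda)$ and whether $\hat L(\lambda) = 0$, and in the $\hat L(\lambda) = 0$ subcase it is a \emph{sum} of two $q$-multinomial products. Combinatorially these subcases correspond to the split-orbit phenomenon noted in the introduction (when $\lambda$ has only even parts, $\0_\lambda$ splits into two $\mathrm{SO}_{2n}(\CC)$-orbits with equal $q$-Kreweras numbers) and to a parallel dichotomy in the type-$D$ noncrossing partition model for the ``central'' block. Reconciling the two-term sum with the enumeration of rotation-fixed type-$D$ noncrossing multichains, while simultaneously tracking the $\mu_1(\lambda)$-parity subcases (which reflect the distinguished role of zero-sum blocks in the type-$D$ model), will require the most delicate case analysis; I would treat it last, after the uniform pattern of types $A, B, C$ has fixed the form of the bijections.
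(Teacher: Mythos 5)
Your overall plan coincides in outline with the paper's proof: evaluate the explicit $q$-Kreweras formulas at $q=\omega_d$ (Proposition~\ref{evaluations-prop}), check that the monomial prefactor specializes to exactly $+1$ and not merely a root of unity (Proposition~\ref{q-powers-become-plus-1}, via the congruences for $c(\lambda)$ in Lemma~\ref{c(lambda)-congruences-lemma}), and then count $d$-fold symmetric elements of prescribed block type in the combinatorial models of $NC^{(s)}(W)$. For types $A,B,C$ your sketch is viable, though two points are muddled: the fixed-point counts are obtained not by a Burnside or regular-element argument but by folding a $d$-fold symmetric partition onto the labels $\{\pm 1,\ldots,\pm\tfrac{sn}{d}\}$, which gives a bijection to type $B_{sn/d}$ noncrossing partitions counted by Athanasiadis's block-size formula; and for the orbits $\0_X$ actually appearing in Conjecture~\ref{CSP-conjecture} (principal-in-a-Levi, so $\hat{L}(\lambda)=0$, or $\hat{L}(\lambda)=1$ with $\mu_1$ odd in type $D$) the products $\prod_i(q^{m-2i+1}-1)$ are empty, so your proposed use of those factors in the nonvanishing analysis is moot, and your stated vanishing criterion for them is in any case backwards (the product vanishes as soon as $d$ divides $m-2i+1$ for \emph{some} $i$, which always happens at $i=1$).

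The genuine gap is type $D$, which you defer but whose key ingredients are missing or misidentified. The model is not the centrally symmetric noncrossing partitions of $\{\pm 1,\ldots,\pm n\}$: it is the annular model on $2sn$ labels, outer labels clockwise and inner labels counterclockwise, subject to conditions NCD1--NCD5, including Kim's determinacy condition NCD5. Two structural facts drive the matching and are absent from your plan. First, the dichotomy of Proposition~\ref{NCD-traversing-subtlety}: with no zero block, either every inner label lies in a traversing block, or there are no traversing blocks and exactly two entirely inner blocks of size $s$. This, and not the $\mathrm{SO}_{2n}$ versus $\mathrm{O}_{2n}$ orbit splitting you invoke (that splitting concerns only the very even partitions and merely contributes the factor $1/2$ noted in the introduction), is what the two-term sum in the $\hat{L}(\lambda)=0$ formula reflects; for $d=2$ and $n$ odd the reconciliation further needs the identity \eqref{easily-checked-identity}. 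Second, the orientation argument of Lemma~\ref{counterclockwise-becomes-clockwise-lemma}: since $c^{2s(n-1)/d}$ rotates the outer and inner boundaries in opposite senses, for $d\geq 3$ an invariant annular partition with traversing blocks is forced to have honest $d$-fold rotational symmetry \emph{and} $d$ must divide $n$; this is precisely the source of the factor $1+(-1)^{2n/d}$ in the evaluation, and likewise for $d=2$ one needs that $c^{s(n-1)}$ acts as $\iota$ outside but as $\iota^{n-1}$ inside, producing the parity-of-$n$ case split. Without these facts the type-$D$ count cannot be matched to the evaluated formula, so the proposal as it stands does not yet constitute a proof.
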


%%%%%%%%%%%%%%%%%%%%%%%%%%%%
%Narayana part
%%%%%%%%%%%%%%%%%%%%%%%%%

\subsubsection{$q$-Narayana numbers}

We are able to establish a $q$-version of the Narayana numbers in types $A,B,C$.  We take up the question of the $q$-Narayana numbers for other types in a sequel paper.
From Theorem \ref{divisibility-and-nonnegativity-theorem_all} we have that the lowest degree $q$-monomial in $\krew$ equals $q^{cm-z}$ for some $c,z \in \NN$.  

\begin{definition}
Define $d(\0) = r - c$.
\end{definition}

As long as $e$ does not belong to one of the orbits in \eqref{ill-behaved-orbits}, 
then by Theorem \ref{divisibility-and-nonnegativity-theorem_all} (iii), 
$d(\0)$ equals the multiplicity of $V$  in $H^*(\BBB_e)$.
%of the Springer fiber for $e \in \0$ (see \S \ref{general-Kreweras-definition-section}).
In particular, this holds whenever $\ggg$ is of classical type or $e$ is principal-in-a-Levi.  
On the other hand, when $e$ belongs to one of the orbits in \eqref{ill-behaved-orbits}, then 
$d(\0)$ equals the multiplicity of $V$  in $A(e)$-invariants of $H^*(\BBB_e)$.
Using the parameter $d(\0)$, we obtain a good definition for a $q$-version of the Narayana numbers in types $A,B,C$.

\begin{definition}  (Types $A, B, C$)
The {\it $q$-Narayana number} for $k=0,1,2,\ldots,r$ and very good $m$ is given by
\begin{equation}
\label{q-Narayana-definition}
\Nar(\Phi,m,k;q):=
 \sum_{\substack{\text{nilpotent orbits } \0:\\ d(\0)=k}} \Krew(\Phi, \0 ,m;q).
\end{equation}
\end{definition}

\begin{theorem}
\label{q-Narayana-formula-theorem}
The $q$-Narayana polynomials have the following formulas in types $A,B,C$:
\begin{itemize}
\item 
For type $A_{n-1}$, when $\gcd(m,n)=1$ one has for $0 \leq k \leq n-1$ that
$$
\Nar(A_{n-1},m,k;q)=
q^{(n-1-k)(m-1-k)}
\frac{1}{[k+1]_q} \qbin{n-1}{k}{q} \qbin{m-1}{k}{q}.
$$
\item 
For either of type $B_n, C_n$, when $m$ is odd one has 
for $0 \leq k \leq n$ that
$$
\Nar(B_n,m,k;q)=\Nar(C_n,m,k;q)=
(q^{2})^{(n-k)(\hat{m}-k)} \qbin{n}{k}{q^2} \qbin{\hat{m}}{k}{q^2}.
$$
\end{itemize}
\end{theorem}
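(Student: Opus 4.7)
The plan is to substitute the explicit formulas from Theorem~\ref{q-Kreweras-formula-theorem} into the definition~\eqref{q-Narayana-definition} and show that the resulting sum collapses to the claimed product. A first step is to identify $d(\0_\lambda)$ as a function of $\lambda$: in type $A_{n-1}$, reading off the lowest-degree $q$-monomial in $\Krew(A_{n-1},\0_\lambda,m;q)$ gives $m$-coefficient $n-\ell(\lambda)$, so $d(\0_\lambda)=\ell(\lambda)-1$; an analogous inspection in types $B_n$ and $C_n$ (noting that the product $\prod_{i=1}^{\hat L(\lambda)}(q^{m-2i+1}-1)$ contributes an $m$-independent term in lowest degree) yields $d(\0_\lambda)=\hat{\ell}(\lambda)$. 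Thus restricting the Narayana sum to $d(\0_\lambda)=k$ amounts to summing over partitions of prescribed (half-)length.

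In type $A$, one uses the factorizations $\qbin{m}{\mu(\lambda)}{q}=\qbin{m}{k+1}{q}\qbin{k+1}{\mu(\lambda)}{q}$ (valid since $|\mu(\lambda)|=k+1$) and $\tfrac{[m]_q}{[k+1]_q}\qbin{m-1}{k}{q}=\qbin{m}{k+1}{q}$ to pull the entire $m$-dependence out of both sides, so that Theorem~\ref{q-Narayana-formula-theorem} reduces to the clean $m$-free identity
\[
\sum_{\substack{\lambda\vdash n\\ \ell(\lambda)=k+1}}
q^{(n-k-1)(k+1)-c(\lambda)}\qbin{k+1}{\mu(\lambda)}{q} = \qbin{n-1}{k}{q}.
\]
I would prove this by induction on $n$, splitting the sum according to whether $\lambda_{k+1}=1$ or $\lambda_{k+1}\geq 2$ (i.e., removing a part equal to $1$ versus subtracting $1$ from every part) and matching the two pieces against the $q$-Pascal recursion $\qbin{n-1}{k}{q}=\qbin{n-2}{k}{q}+q^{n-1-k}\qbin{n-2}{k-1}{q}$. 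Alternatively, interpreting $\qbin{k+1}{\mu(\lambda)}{q}$ as the inversion generating function for rearrangements of the parts of $\lambda$ recasts the left-hand side as a sum over the $\binom{n-1}{k}$ compositions of $n$ with $k+1$ positive parts, which should match the area generating function for lattice paths from $(0,0)$ to $(n-1-k,k)$.

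For types $B_n$ and $C_n$, I would expand $\prod_i(q^{m-2i+1}-1)=\sum_{S\subseteq\{1,\ldots,\hat L\}}(-1)^{\hat L-|S|}q^{\sum_{i\in S}(m-2i+1)}$ and then pull the remaining $m$-dependence out of the $q^2$-binomial $\qbin{\hat m-\hat L}{\hat\mu(\lambda)}{q^2}$ by an analogous factorization. After grouping terms by $|S|$ and interchanging the order of summation, each of $\Nar(B_n,m,k;q)$ and $\Nar(C_n,m,k;q)$ should reduce to a $q^2$-analogue of the type-$A$ identity above, provable by the same induction. The striking equality $\Nar(B_n,m,k;q)=\Nar(C_n,m,k;q)$, despite the sums being over the very different sets $\Par_B(2n+1)$ and $\Par_C(2n)$, is the most novel part of the theorem; I would aim to establish it either by reducing both sides separately to the common $q^2$-identity, or (more transparently) via a parity-switching bijection between $\{\lambda\in\Par_B(2n+1):\hat{\ell}(\lambda)=k\}$ and $\{\lambda\in\Par_C(2n):\hat{\ell}(\lambda)=k\}$ that preserves $(\hat{\mu},\hat L)$ and transforms the correction $\tau_1(\lambda)+\tfrac14$ of type $B$ into $\tau_0(\lambda)+\delta(\lambda)$ of type $C$.

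The principal obstacle is the bookkeeping of signs and fractional-looking exponents in types $B$ and $C$: the factor $(-1)^{\hat L}$ coming out of the expanded product, together with the fractional contributions $-L/4$, $1/4$, and $-\ell/2$ to $\psi(n,m,\lambda)+\tau_\epsilon(\lambda)+\delta(\lambda)$, must cancel across the sum to yield an honest polynomial with the predicted lowest degree $2(n-k)(\hat m-k)$. I expect the type-$A$ identity to be routine (by induction or by the lattice-path bijection above), while the $B=C$ coincidence and the reduction to the common $q^2$-identity are where the real combinatorial work lies.
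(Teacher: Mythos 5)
Your reduction of the type $A_{n-1}$ case is sound: with $d(\0_\lambda)=\ell(\lambda)-1$ and the factorizations you cite, that case is equivalent to the $m$-free identity $\sum_{\lambda\vdash n,\ \ell(\lambda)=k+1} q^{(n-1-k)(k+1)-c(\lambda)}\qbin{k+1}{\mu(\lambda)}{q}=\qbin{n-1}{k}{q}$, which is indeed true. But your proposed proof of it does not work as stated: in the split on the smallest part, the branch ``$\lambda$ has a part equal to $1$'' does not contribute a power of $q$ times the analogous sum for $(n-1,k-1)$, because $\qbin{k+1}{\mu(\lambda)}{q}$ is not a $q$-power multiple of $\qbin{k}{\mu(\lambda)-e_1}{q}$ (their ratio is $[k+1]_q/[\mu_1(\lambda)]_q$); only the branch ``all parts $\geq 2$'' telescopes cleanly, giving the same sum with $n$ replaced by $n-k-1$, so the recursion you would need is not $q$-Pascal. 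Likewise the lattice-path/partial-sum statistic does not match partition by partition (for $n=4,k=1$ the partition $(3,1)$ contributes $q+q^2$ on the left, while its two compositions carry $1+q^2$ under that statistic), so the bijective variant also needs a different statistic or an intermediate symmetrization. These are repairable, but they are gaps as written.

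The serious gap is in types $B_n,C_n$, which you flag as ``the real work'' but do not carry out. After expanding $\prod_{i=1}^{\hat{L}(\lambda)}(q^{m-2i+1}-1)$ you still face a signed sum over $\lambda\in\Par_B(2n+1)$ (resp.\ $\Par_C(2n)$) with $\hat{\ell}(\lambda)=k$, whose exponents involve $c(\lambda)/2$, $L(\lambda)/4$, $\tau_\epsilon(\lambda)$ and $\delta(\lambda)$; these are not functions of $(\hat{\mu}(\lambda),\hat{L}(\lambda))$ alone, so neither the reduction to ``a $q^2$-analogue of the type-$A$ identity'' nor the asserted parity-switching bijection between the two index sets is established, and no candidate identity or bijection is exhibited. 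For comparison, the paper avoids all of this by returning to the source of the formulas: in \eqref{formula:typeA} and \eqref{BC_almost} everything except $1/|Z_{G^F}(e_\lambda)|$ depends only on $\ell(\lambda)$ (resp.\ $\hat{\ell}(\lambda)$), so summing over orbits with $d(\0)=k$ turns the Narayana sum into an explicit prefactor times $\#\{\text{nilpotent elements of prescribed rank}\}/|G^F|$, and closed formulas of Crabb and Lusztig for those counts finish types $A$, $B$, $C$ at once; the coincidence $\Nar(B_n,m,k;q)=\Nar(C_n,m,k;q)$ is exactly the statement that type-$B$ nilpotents of rank $2s$ are equinumerous with type-$C$ nilpotents of rank $2s$ or $2s-1$, a one-line consequence of Lusztig's two formulas (or of his equality of cardinalities of corresponding special pieces). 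If you want to keep the purely combinatorial route, you must actually supply and prove the $q^2$-identity (or construct the bijection) in types $B,C$; as it stands the proposal does not prove the second bullet of the theorem, and its type-$A$ argument needs the induction repaired.
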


\noindent
When $m=h+1=n+1$ in type $A_{n-1}$, the polynomial $\Nar(A_{n-1},h+1,k;q)$ 
equals a $q$-Narayana number considered by Furlinger and Hofbauer \cite{FurlingerHofbauer} and Br\"anden in \cite{Braenden}.

Theorem~\ref{q-Narayana-formula-theorem}
shows that, in 
the one instance where two root systems $\Phi=B_n,C_n$ are associated
with the same Weyl group $W$, it turns out that
$\Nar(\Phi,m,k;q)$ depends only on $W$, and not on $\Phi$,
even though the $\krew$ for the two root systems are not 
the same (they are not even indexed by the same set).
It is also interesting to note that, although the polynomials $\krew$
can have negative integral coefficients in types $B$ and $C$, the 
formulas above for $\Nar(\Phi,m,k;q)$ exhibit them as polynomials 
in $q$ with {\it nonnegative} coefficients, that is, lying in $\NN[q]$.

Theorem \ref{q-Narayana-formula-theorem} is proved in Section~\ref{proof-of-Narayana-formulas-section}.

%%%%%%%%%%%%%%%%%%%%%%%%%%%%%%%%%%%%%%%%%%%%%%%%%%%%%%%%%%%%%%%
\section{Reviewing $f_{e,\phi}$ and 
the $q$-Kreweras numbers $f_{e,\one}$}
\label{general-Kreweras-definition-section}
%%%%%%%%%%%%%%%%%%%%%%%%%%%%%%%%%%%%%%%%%%%%%%%%%%%%%%%%%%%%%%%

In this section we detail some results from \cite{Sommers2}. 
Recall that 
$%$
S = \Sym(V^*) %\cong \RR[x_1,\ldots,x_r]
$ is the graded ring of polynomials on the reflection representation $V$ of $W$
and $r = \dim V$.
When $m$ is very good for $\Phi$, it is known \cite{Gordon, BerestEtingofGinzburg}
that $S$ contains a 
{\it homogeneous system of parameters} $\theta^{(m)}=(\theta^{(m)}_1,\ldots,\theta^{(m)}_r)$
whose 
%$\RR$-span 
span is $W$-stable and carries a representation isomorphic to $V$.
%It is also known \cite{BerestEtingofGinzburg, BessisR, Gordon}
This implies by \cite{Solomon} that the $W$-invariant subspace of the finite-dimensional quotient
ring $S/(\theta^{(m)})$ is a graded vector space whose Hilbert series is
$\Cat(W,m;q)$ and in particular shows that $\Cat(W,m;q)$ is polynomial.
%define a graded virtual representation of $W$ by
%$$\H = \sum^{l}_{j=0}j (-q^m)^j S \otimes \wedge^j V,$$ 
%where $V*$ sits in degree $1$ in $S$.  ($V$* versus $V$; $q$ versus degree).
%When $m$ is restricted to 
%this is known to be an actual graded representation of $W$ since it coincides with a quotient of $S$ generated by 
%an ideal generated by  single copy of $V$ in $S$ in degree $m$.
%\begin{definition}
%For a graded $\RR$-vector space $U = \sum_{d \geq 0} U_d$ with each
%$u_d$ a $W$-representation, its {\it Frobenius series} is this
%polynomial or power series in $q$, having coefficients in the Grothendieck
%group of $\RR W$-modules:
%$$
%\sum_d [U_d] q^d.
%$$ 
%Here $[U_d]$ is the class of $U_d$ in the Grothendieck group.
%\end{definition}
A main result of \cite{Sommers2} is that $S/(\theta^{(m)})$, 
as graded $W$-module, is an integral combination of certain
finite-dimensional graded $W$-representations (and their graded shifts),
related to the Green functions that arise in the representation theory of Chevalley groups.

To be more precise,
let $G$ be the connected simple algebraic group of adjoint type over an algebraically closed field 
$\mathbf{k}$ of good characteristic $p$ attached to the root system $\Phi$.  Let $\ggg$ be its Lie algebra.
For a nilpotent element $e \in \ggg$, let $\BBB_e$ be the variety of Borel subalgebras containing $e$, known as a Springer fiber.
Let $Z_G(e)$ be the centralizer of $e$ in $G$ and let $A(e) := Z_G(e)/ Z^{\circ}_G(e)$ be the component group of $e$.
Then the $l$-adic cohomology $H^*(\BBB_e)$ carries a representation of $W \times A(e)$ \cite{Lus1.5, Hotta}, originally defined by Springer \cite{SpringerRep}.
Denote the irreducible $\overline{{\mathbb Q}}_l$-representations of $A(e)$ by $\ar$.  
For $\phi \in \ar$ 
define the finite-dimensional, graded representations $Q_{e, \phi}$ so that 
$$H^*(\BBB_e) =  \sum_{\phi \in \ar} Q_{e, \phi} \otimes \phi,$$ 
as graded representations of $W \times A(e)$.
The cohomology of $\BBB_e$ vanishes in odd degrees and 
we grade $Q_{e, \phi}$ by putting $q$ in cohomological degree two.
Then (as Frobenius series)

\begin{equation}
\label{parking-space-as-Kreweras-times-Springer-fibers}
S/(\theta^{(m)}) = \sum_{e,\phi} f_{e,\phi}(m; q) Q_{e, \phi}
\end{equation}

\noindent
where $f_{e,\phi}(m; q) \in \ZZ[q]$ and the sum is over a set of representatives $e$ of the nilpotent orbits in $\ggg$
and those $\phi \in \ar$ that appear in the Springer correspondence.

There is an explicit formula \cite[Equation 18]{Sommers2} for $f_{e,\phi}(m; q)$ that involves (1) the cardinality of the rational nilpotent orbits in $\ggg$ for a finite subfield $\FF_q$ of $\mathbf k$;
and (2) the Frobenius series of the reflection representation $V$ in $Q_{e,\phi}$.

%Define $\{(e_1, \pi_1),  (e_2, \pi_2), \dots, (e_{\kappa}, \pi_{\kappa})  \},$ where $e_j \in \ZZ_{\geq 0}$  and $\pi_j \in \ar$, by 
Define $\{(m_1, \pi_1),  (m_2, \pi_2), \dots, (m_{\kappa}, \pi_{\kappa})  \},$ where $m_j \in \ZZ_{\geq 0}$  and $\pi_j \in \ar$, by 
\begin{equation}
\label{nilpotent-orbit-exponents-definition}
\sum_{ j \geq 0} q^{j} \langle H^{2j}(\BBB_e), V \rangle_{W}  = q^{m_1} \pi_1 + q^{m_2} \pi_2 + \cdots + q^{m_{\kappa}} \pi_{\kappa},
\end{equation}
where the pairing is the usual inner product for $W$ and the result is viewed as a Frobenius series for $A(e)$. 
%In other words, the $(m_j, \pi_j)$'s measure the occurrences of $V$ in the total Springer representation $H^*(\BBB_e)$
%and ${\kappa} = \kappa(e)$ is the multiplicity of such occurrences.
It turns out that at most one of the $\pi_j$'s is non-trivial and we set 
$\pi_{\kappa}$ to be this non-trivial representation of $A(e)$ when it occurs.  %See table ??.
When $e$ belongs to $\0_X$, it is known  \cite{Spalt, LeSh}  that the $m_i$'s coincide with the Orlik-Solomon exponents from \ref{Orlik-Solomon-exponents}.

Let $G(\FF_q) \subset G$ be the $\FF_q$-points of $G$ with respect to a split Frobenius endomorphism $F$.
Let $c$ denote a conjugacy class in $A(e)$ and $e_c$ a representative from the rational 
$G(\FF_q)$-orbit in $\ggg$ over $\FF_q$ corresponding to the pair $(e,c)$.   
%Set $d = \dim(\pi_k)$ and $M = m_1 + \dots + m_{\kappa-1} + dm_{\kappa}$.   
Set $d_\kappa=\dim(\pi_\kappa)$ and
$M = m_1 + \dots + m_{\kappa-1} + d_\kappa \cdot m_{\kappa}$.   
%$M = e_1 + \dots + e_{\kappa-1} + de_{\kappa}$.   
%and let $\tilde{\kappa} = \kappa -1+d$ 
%and $M = \sum_{j=1}^{\kappa-1} e_j + de_{\kappa}$.  

%% evaluate at c=1, q=1 and do the same with the derivative of the Frobenius polynomial above.

Then
\begin{equation}  \label{formula:f}
%f_{e, \phi}(m; q) = q^{m(r - \tilde{\kappa})+  M}   \prod_{j=1}^{\kappa-1} (q^{m-e_j} - 1)  \cdot 
%f_{{\mathbf e}, \phi}(m; q) = q^{m(r - \kappa  - d + 1)+  M}   \prod_{j=1}^{\kappa-1} (q^{m-e_j} - 1)  \cdot 
f_{e, \phi}(m; q) = q^{m(r - \kappa  - d_\kappa + 1)+  M}   \prod_{j=1}^{\kappa-1} (q^{m-m_j} - 1)  \cdot 
%f_{{\mathbf e}, \phi}(m; q) = q^{m(r - \kappa  - d + 1)+  \sum_{j=1}^{\kappa-1} e_j + de_{\kappa}}   \prod_{j=1}^{\kappa-1} (q^{m-e_j} - 1)  \cdot 
% \left( \sum_{i=0}^d (-1)^{d-i} q^{i(m - e_{\kappa})}  \sum_c  \frac{ \!\wedge^{d-i}  \pi_{\kappa}(c) \phi(c)} {|Z_{G^F}(e_c)|} \right) \nonumber
 \left( \sum_{i=0}^{d_\kappa} (-1)^{d_\kappa-i} q^{i(m - m_{\kappa})}  \sum_c  \frac{ \!\wedge^{d_\kappa-i}  \pi_{\kappa}(c) \phi(c)} {|Z_{G^F}(e_c)|} \right) %\nonumber
\end{equation}

When $\pi_{\kappa}$ is trivial, the above expression simplifies to
\begin{equation}  \label{formula:f_easy}
f_{e, \phi}(m; q) = q^{m(r - \kappa)+  \sum m_j} \prod_{j=1}^{\kappa} \big (q^{m-m_j} - 1 \big)
\left(  \sum_c  \frac{ \phi(c)} {|Z_{G^F}(e_c)|} \right)
\end{equation}

In the present paper we are primarily concerned with the Frobenius series of 
$S/(\theta^{(m)})$ at the trivial representation of $W$.  Since the trivial representation of $W$ only
occurs in $Q_{e,\phi}$ for the trivial local system $\phi$ and then only once in degree zero \cite{Borho-Mac_partial},
%\footnote{Eric, I added boldface because these are very important facts, and I'd like to add some justification.   Can we find them somewhere specific in your paper \cite{Sommers2}, or do we need to say more about deducing them from the definitions of the $Q_{e,\phi}$??}
we obtain the identity 
\begin{equation}
\label{original-form-of-q-Kreweras-sums-to-q-Catalan}
\Cat(W,m;q) = \sum_e f_{e,1}(m; q).
\end{equation}
By \cite[Theorem 15]{Sommers2} 
\begin{equation}%{Kr2}
\label{principal-in-Levi-Kreweras-at-q=1}
f_{e,\phi}(m; q=1) = \Krew(W,m,[X])
\end{equation}
when $e$ is conjugate to a principal nilpotent element in $\mathfrak l_X$.
When $e$ is not of that form, on the other hand,
$f_{e,\phi}(m; q=1)=0.$
In light of these results, it is reasonable to think of $f_{e,1}(m; q)$ as $q$-Kreweras numbers, where now there is one such polynomial for each nilpotent orbit in $\ggg$.  Thus,

\begin{definition}
\label{q-Kreweras-definition}
The $q$-Kreweras numbers for $\Phi$ are defined to be
$$\krew =   f_{e,1}(m; q)$$
for $e \in \0$.
\end{definition}

%In types $A,B,C$, the representation $\pi_k$ is trivial so the formula for the $f_{e,1}$'s is particularly easy.  Namely,
%\begin{equation}  \label{formula:f_easiest}
%f_{e,1} = q^{m(r - k)+  M} \frac{ |\mathcal{O}^F_e| }{|G^F|}   \prod_{j=1}^{k} \big (q^{m-e_j} - 1 \big)
%\end{equation}
%where 
%$|\mathcal{O}^F_e| $ is the cardinality of the $F$-rational points of the orbit $\mathcal{O}_e$.  

We will write down the formulas for $f_{e,\phi}(m; q)$, and hence $\krew$ in Section \ref{A_calculations} for type $A_{n-1}$, Section \ref{BCD_calcs} for the other classical types, and Section \ref{Exceptional_calcs} for the exceptional types.

%%%%%%%%%%%%%%%%%%%%%%%%%%%%%%%%%%%%%%%%%%%%%%%%%%%%%%%%%%%%%%%
\section{Computing $f_{e,\phi}$ and the Proof of Theorem~\ref{q-Kreweras-formula-theorem}}
\label{proof-of-Kreweras-formulas-section}
%%%%%%%%%%%%%%%%%%%%%%%%%%%%%%%%%%%%%%%%%%%%%%%%%%%%%%%%%%%%%%%

We use the following notation in this section.   For a partition $\lambda$, let 
$\mu_j := \mu_j(\lambda)$, which recall is the number of parts of $\lambda$ of size $j$.
For a nilpotent element $e \in \ggg$, 
let $d = \dim Z_G(e)$ and $d^u$ denote the dimension of a maximal unipotent subgroup of $Z_G(e)$.

\subsection{Type $A$}  
\label{A_calculations}

For simplicity we will work with $G = GL_n(\bar{\FF}_q)$ and adjust our results for the case where $G$ is adjoint.  Recall from the introduction that the nilpotent $G$-orbits in $\ggg$ are parametrized by $\Par(n)$ of partitions of $n$, with $\lambda \in \Par(n)$
corresponding to the sizes of the Jordan blocks of any element in the nilpotent orbit $\0_{\lambda}$ 
indexed by $\lambda$. 
Let the Frobenius map $F$ consist of raising each matrix element to the $q$-th power, giving the standard
 split structure on $G$, so that 
 $G^F = GL_n(\FF_q)$ and $\ggg^F = {\mathfrak gl}_n(\FF_q)$.
 Then it is known, say by using rational canonical form, that nilpotent $G^F$-orbits on
 $\ggg^F$ are indexed by $\Par(n)$;  in other words, the rational points of $\0_{\lambda}$ remain a single orbit under $G^F$.
 
We wish to compute $$\Krew(A_{n-1}, \0_\lambda,m;q) := f_{e, 1}(m;q)$$
where $e:=e_{\lambda} \in \0_{\lambda}$ is a rational element. % in $\0_{\lambda}$ where $\lambda \in \Par(n)$.
Since all $A(e)$ are trivial in $GL_n(\bar{\FF}_q)$,  
the computation of $f_{e,1}$ in \eqref{formula:f_easy}
reduces to calculating $m_1, \dots, m_{\kappa}$ 
and the value of $|Z_{G^F}(e)|$.  
First, according to  \cite{LeSh} we have 
\begin{equation}\label{type-A-kappa-formula}
\kappa = \ell(\lambda) - 1 \text{ and }  m_j = j,
\end{equation}
Thus \eqref{formula:f_easy}, with $r = n-1$,  becomes
\begin{equation} \label{formula:typeA}
f_{e,1} = 
q^{m(n- \ell(\lambda))+ \binom{\ell(\lambda)}{2}} \cdot
\frac{q-1}{|Z_{G^F}(e)|}   \prod_{j=1}^{\ell(\lambda)  -1} \big (q^{m-j} - 1 \big)
\end{equation}
where the extra factor of $q-1$ accounts for the center of $GL_n(\FF_q)$ since the formulas in 
\eqref{formula:f_easy} are presented relative to an adjoint group.

Next, we need to compute $|Z_{G^F}(e)|$.
As a variety $Z_G(e)$ is isomorphic to the product of 
an affine space (its unipotent radical) and a maximal reductive part of the centralizer of $Z_G(e)$.
The reductive part is isomorphic to
$$Z_{red} := \prod_j GL_{\mu_j} (\bar{\FF}_q).$$
Up to isomorphism, the affine space and each factor in the reductive part carry the standard action of $F$, 
so $Z_{G^F}(e)$ is isomorphic over $\FF_q$ to 
$$\FF_q^{d_1^u} \times \prod_j GL_{\mu_j} (\FF_q).$$
%, and carries the split $F$-structure.
Since 
$$|GL_{r} (\FF_q)| = q^{\binom{r}{2}} (q-1)^r [r]!_q,$$
it follows that
\begin{equation}\label{cent_A}
%|Z_{G^F}(e)| =   q^{ n + \sum_i \lambda'_i (\lambda'_i - 1)- \sum_j \binom{\mu_j+1}{2} }  (q-1)^{\ell} \prod_{j} [\mu_j]!_q
%|Z_{G^F}(e)| =   q^{d - \sum \binom{\mu_j\!+\!1}{2}} (q-1)^{\ell(\lambda)} \displaystyle\prod_{j} [\mu_j]!_q
|Z_{G^F}(e)| =   q^{d^u} (q-1)^{\ell(\lambda)} \displaystyle\prod_{j} [\mu_j]!_q
\end{equation}
The Borel subgroup of $Z_{red}$ has dimension $\sum \binom{\mu_j\!+\!1}{2}$,
so $d^u = d - \sum \binom{\mu_j\!+\!1}{2}$.  
Now $d = \sum (\lambda'_i)^2$ (see \cite{Carter}) and so a calculation gives
\begin{equation}  \label{small_calc}
d^u = \sum (\lambda'_i)^2 -  \sum \binom{\mu_j+1}{2} =
\binom{\ell(\lambda)}{2}  + \crossterms(\lambda)
\end{equation}
where
$$
\crossterms(\lambda):=
\sum_{j \geq 1} \lambda'_j \lambda'_{j+1}.
$$

Plugging  (\ref{cent_A}) and (\ref{small_calc}) into (\ref{formula:typeA})
into the above %and substituting $k = \ell(\lambda) -1$ gives
yields
%$$Krew(A_{n-1},e_\lambda,m;q) = 
$$
\begin{aligned}
f_{e,1} 
&= 
q^{m(n- \ell(\lambda)) -  \crossterms(\lambda)}
\frac{1}{\prod_{j} [\mu_j]!_q} \cdot \frac{[m-1]!_q}{[m-\ell(\lambda)]!_q} \\
&=q^{m(n-\ell(\lambda))-c(\lambda)} 
\frac{1}{[m]_q}\qbin{m}{\mu(\lambda)}{q}.
\end{aligned}
$$
as asserted in Theorem~\ref{q-Kreweras-formula-theorem}(Type $A$).

\begin{remark}
The formula for $|Z_{G^F}(e)|$ could also be obtained by looking up the value $|\mathcal{O}^F_e| = \frac{|G^F|}{|Z_{G^F}(e)|}$ in, for example, \cite{Crabb}.  
\end{remark}

\subsection{Preparation for types $B, C, D$}

From (\ref{formula:f}), in order to compute $f_{e, \phi}$,
we need to evaluate the sum
$$\sum_c  \frac{ \phi(c)} {|Z_{G^F}(e_c)|}$$
as $c$ runs over representatives of the conjugacy classes in $A(e)$.
The component group $A(e)$ is elementary abelian, hence we will have occasion to use the following two lemmas.

First, we introduce some notation for working with elementary abelian groups and their characters.
Let $v, w \in (\FF_2)^{s}$.  Write $v = (v_i), w=(w_i)$ relative to the standard basis 
and denote the usual dot product $\langle v, w \rangle := \sum_{i=1}^s v_iw_i$.
For each $w \in \FF_2^s$, 
define the character $\phi_w \in \widehat{\FF_2^{s}}$ by 
$$\phi_w(v) = (-1)^{\langle v, w \rangle} \in \QQ.$$
Every character of $\FF_2^{s}$ is of the form $\phi_w$ for a unique $w$.

Let $x_1, x_2, \dots, x_s$ be a set of $s$ variables.  
For $a \in \FF_2$ and a variable $y$, we evaluate $y^a$ to $1$ or $y$ according to 
whether $a =0$ or $a=1$, respectively, in $\FF_2$.

\begin{lemma}  \label{full_lemma}
%Let $x_1, x_2, \dots, x_m$ be a set of $m$ variables.  
Let $\phi$ be a character of $\FF_2^s$.  Write $\phi=\phi_w$ for the unique such $w \in \FF_2^s$.
Let $t$ be a nonnegative integer with $0 \leq t \leq s$.  Then
\begin{eqnarray*} % \label{identity}
\sum_{v \in \FF_2^s}  \phi(v) \prod_{i=1}^{t} (x_i + (-1)^{v_i})   = 
\begin{cases}
%2^{s} (\prod_{i=1}^t x_i^{w_i + 1}) & \text{if }t<s \text{ and }w_j=0 \text{ for all } j>t \\  % t not s
2^{s} \displaystyle \prod_{i=1}^t x_i^{w_i + 1} & \text{if } w_j=0 \text{ for all } j>t \\  % t not s
0 \text{ otherwise}
\end{cases}
\end{eqnarray*}
\end{lemma}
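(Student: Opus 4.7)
The plan is to factor the sum over $v = (v_1, \ldots, v_s)$ according to whether the index lies in $\{1,\ldots,t\}$ or $\{t+1, \ldots, s\}$, since the product only involves $v_1,\ldots,v_t$ while the character $\phi_w$ splits as a tensor product across all coordinates. Write $v = (v', v'')$ with $v' \in \FF_2^t$ and $v'' \in \FF_2^{s-t}$, and similarly $w = (w', w'')$. Then $\phi_w(v) = (-1)^{\langle v', w' \rangle} (-1)^{\langle v'', w'' \rangle}$, and the sum separates as
\[
\Bigl(\sum_{v' \in \FF_2^t} (-1)^{\langle v', w' \rangle} \prod_{i=1}^{t} (x_i + (-1)^{v'_i})\Bigr) \cdot \Bigl(\sum_{v'' \in \FF_2^{s-t}} (-1)^{\langle v'', w'' \rangle}\Bigr).
\]

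The second factor is the standard character sum on $\FF_2^{s-t}$: it equals $2^{s-t}$ when $w'' = 0$ and vanishes otherwise. This immediately produces the ``otherwise'' clause of the lemma (the case where some $w_j$ with $j>t$ is nonzero).

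Assuming $w'' = 0$, the first factor further decouples as a product over $i=1,\ldots,t$, since the summand is multiplicative in each $v'_i$. For each $i$ one is left with a two-term sum over $v'_i \in \FF_2$ which evaluates to $2 x_i$ when $w_i = 0$ (because $(x_i+1) + (x_i-1) = 2x_i$) and to $2$ when $w_i = 1$ (because $(x_i+1) - (x_i-1) = 2$). Under the convention that $y^a$ equals $y$ or $1$ according to whether $a \in \FF_2$ is $1$ or $0$, both cases are captured uniformly as $2 x_i^{w_i+1}$ with the exponent taken in $\FF_2$. Multiplying the $t$ factors and combining with the $2^{s-t}$ from the second sum gives the claimed value $2^s \prod_{i=1}^t x_i^{w_i+1}$.

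There is no real obstacle here; the lemma is essentially a bookkeeping exercise in Fourier analysis on $\FF_2^s$. The only subtlety is keeping the $\FF_2$-arithmetic in the exponent $w_i+1$ straight with the integer arithmetic in the products, but the convention introduced just before the lemma handles this cleanly.
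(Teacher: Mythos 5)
Your proof is correct, but it takes a different route from the one the paper intends. The paper omits a proof of this lemma and refers to its proof of Lemma~\ref{even_lemma}: there the product $\prod_{i=1}^{t}(x_i+(-1)^{v_i})$ is expanded as $\sum_{u\in\FF_2^{t}}\mathbf{x}^{u+\mathbf{1}}\phi_u(v)$, the order of summation is switched, and character orthogonality picks out the surviving $u$; specialized to the full group $\FF_2^{s}$ this forces $u=w$ (hence $w_j=0$ for all $j>t$) and yields $2^{s}\prod_{i=1}^{t}x_i^{w_i+1}$. You instead split $v=(v',v'')$ along the coordinate blocks $\{1,\dots,t\}$ and $\{t+1,\dots,s\}$, annihilate the $v''$-sum by the standard character sum unless $w''=0$, and evaluate the $v'$-sum coordinatewise, each factor being $2x_i^{w_i+1}$ under the stated convention. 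Both arguments are complete and correct; yours is shorter and more transparent here because $\FF_2^{s}$ is a direct product of its coordinate subgroups and $\phi_w$ factors accordingly. The payoff of the paper's Fourier-expansion route is that it transfers essentially verbatim to Lemma~\ref{even_lemma}, where the sum runs over the index-two subgroup $K$, which is not a product of coordinates, so your coordinatewise factorization would not carry over there; this is presumably why the authors prove only that case and call the present lemma's proof ``essentially the same (but simpler).''
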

The identity also hold in the degenerate case where $s=0$.
We omit the proof since it is essentially the same (but simpler) as the proof of the next lemma.
Let $K$ denote the subgroup of $\FF_2^{s}$ consisting of those $v \in \FF_2^{s}$ with $\sum v_i = 0$.
Any character $\phi \in \widehat{K}$ is now equal to the restriction of $\phi_w$ 
for two values of $w\in \FF_2^{s}$, call them $w', w''$, where 
$w' + w'' = (1, 1, \dots, 1)$.
% with $w_m=0$.
%%Write $\epsilon_i := (-1)^{v_i}$.  %Let $\delta = (1,1,\dots, 1) \in (\FF_2)^{m}$.

\begin{lemma}  \label{even_lemma}
%Let $x_1, x_2, \dots, x_m$ be a set of $m$ variables.  
Let $\phi$ be a character of $K$ and $s >0$.  Let $w \in \FF_2^{s}$ be the unique choice such that $\phi=\phi_w$ and $w_s=0$.
Let $t$ be a nonnegative integer with $0 \leq t \leq s$.  Then
\begin{eqnarray*}  %\label{identity}
\sum_{v \in K}  \phi_w(v) \prod_{i=1}^{t} (x_i + (-1)^{v_i})   = 
\begin{cases}
2^{s-1} \left(\prod_{i=1}^s x_i^{w_i} + \prod_{i=1}^s x_i^{w_i + 1} \right) & \text{if } t=s\\
2^{s-1} \prod_{i=1}^t x_i^{w_i + 1} & \text{if }t<s \text{ and }w_j=0 \text{ for all } j>t \\
0 \text{ otherwise}
\end{cases}
\end{eqnarray*}

\end{lemma}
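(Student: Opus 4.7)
The plan is to reduce Lemma \ref{even_lemma} directly to Lemma \ref{full_lemma} by a standard character-theoretic averaging trick. Since $K$ is the kernel of the sign character $\phi_{\mathbf{1}}$ on $\FF_2^s$, where $\mathbf{1}=(1,1,\ldots,1)$, the indicator function of $K$ inside $\FF_2^s$ equals $\tfrac{1}{2}(1+\phi_{\mathbf{1}}(v))$. Multiplying the summand through this indicator, and using $\phi_w \cdot \phi_{\mathbf{1}} = \phi_{w+\mathbf{1}}$, one obtains
\begin{equation*}
\sum_{v \in K} \phi_w(v) \prod_{i=1}^{t}(x_i + (-1)^{v_i}) \;=\; \tfrac{1}{2}\bigl(S(w) + S(w+\mathbf{1})\bigr),
\end{equation*}
where $S(u) := \sum_{v \in \FF_2^s} \phi_u(v) \prod_{i=1}^t(x_i + (-1)^{v_i})$ is exactly the sum evaluated by Lemma \ref{full_lemma}.

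Next I apply Lemma \ref{full_lemma} twice and perform a short case analysis. By that lemma, $S(w)$ is nonzero precisely when $w_j=0$ for all $j>t$, in which case it equals $2^s \prod_{i=1}^t x_i^{w_i+1}$; while $S(w+\mathbf{1})$ is nonzero precisely when $w_j=1$ for all $j>t$, in which case it equals $2^s \prod_{i=1}^t x_i^{(w_i+1)+1} = 2^s \prod_{i=1}^t x_i^{w_i}$, using $w_i \in \FF_2$. When $t=s$ both vanishing conditions are vacuous, so both sums contribute and the formula collapses to $2^{s-1}\bigl(\prod_{i=1}^s x_i^{w_i} + \prod_{i=1}^s x_i^{w_i+1}\bigr)$, matching the first branch of the stated answer. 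When $t<s$, the normalization $w_s=0$ forces $(w+\mathbf{1})_s=1\neq 0$, so $S(w+\mathbf{1})$ vanishes automatically, and the answer reduces to $\tfrac{1}{2} S(w)$, giving either $2^{s-1}\prod_{i=1}^t x_i^{w_i+1}$ or $0$ according to whether $w_j=0$ for all $j>t$, again matching the remaining two branches exactly.

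The only mildly delicate point — not really an obstacle — is to respect the normalization $w_s=0$ built into the statement. Both $w$ and $w+\mathbf{1}$ restrict to the same character of $K$, so without fixing a representative the two products in the $t=s$ case could be swapped. Fixing $w_s=0$ resolves this ambiguity and, at the same time, is exactly what forces $S(w+\mathbf{1})$ to vanish whenever $t<s$, cleanly separating the two regimes of the formula.
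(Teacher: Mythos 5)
Your proof is correct, but it takes a genuinely different route from the paper's. The paper proves Lemma~\ref{even_lemma} directly: it expands $\prod_{i=1}^{t}(x_i+(-1)^{v_i})=\sum_{u\in\FF_2^t}\mathbf{x}^{u+\mathbf{1}}\phi_u(v)$, swaps the order of summation, and then invokes orthogonality of characters of $K$, which forces a hands-on analysis of exactly when $\phi_u$ and $\phi_w$ agree on $K$ (namely $u=w$ or $u=w+\mathbf{1}$, intersected with the constraint $u_j=0$ for $j>t$); Lemma~\ref{full_lemma} is then said to have "essentially the same (but simpler)" proof and is left unproved. You instead write the indicator of $K$ as $\tfrac{1}{2}(1+\phi_{\mathbf{1}}(v))$, lift the sum to all of $\FF_2^s$, and quote Lemma~\ref{full_lemma} twice, so that the whole case analysis reduces to checking the support conditions for $w$ and $w+\mathbf{1}$; in particular your argument makes transparent why two terms survive when $t=s$ and why the normalization $w_s=0$ automatically kills $S(w+\mathbf{1})$ when $t<s$. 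Your reduction is shorter and cleaner as a derivation of Lemma~\ref{even_lemma} from Lemma~\ref{full_lemma}, and since Lemma~\ref{full_lemma} is stated earlier in the paper you are entitled to cite it; the only trade-off is that your route does not stand alone — if one also had to prove Lemma~\ref{full_lemma}, one would essentially reproduce the paper's expand-and-use-orthogonality computation there, which is why the paper's self-contained argument and your reduction carry comparable total content. Your handling of the representative issue (fixing $w_s=0$, noting $w$ and $w+\mathbf{1}$ give the same character of $K$, and that the exponents are read modulo $2$ so $x_i^{(w_i+1)+1}=x_i^{w_i}$) is exactly the delicate point, and you got it right.
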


\begin{proof}
Embed $\FF_{\tiny 2}^t$ in $\FF_2^s$ via the first $t$ coordinates.  
%For $u =(u_1, \dots, u_r, 0, \dots, 0) \in \FF_{\tiny 2}^m$, 
%let $\bar{u}$ be the vector $(u_1+1, \dots, u_r+1, 0, \dots, 0)$. 
Then
\begin{eqnarray*}
\prod_{i=1}^{t} (x_i + (-1)^{v_i})  = \sum_{u \in \FF_2^{t}} \prod_{i=1}^{t} (x_i^{u_i+1} ((-1)^{v_i})^{ u_i}) =
\sum_{u \in \FF_2^{t}} (\prod_{i=1}^{t} x_i^{u_i+1} ) (-1)^{\sum v_i u_i}  & \\
 = \sum_{u \in \FF_2^{t}} \prod_{i=1}^{t} x_i^{u_i+1}  \cdot (-1)^{\langle v, u \rangle} = 
  \sum_{u \in \FF_2^{t}} (\prod_{i=1}^{t} x_i^{u_i+1}) \phi_{u} (v)
\end{eqnarray*}

\noindent Writing $\mathbf{x}^{u+\mathbf{1}}$ for $\prod_{i=1}^{t} x_i^{u_i+1}$ and using the identity above
and switching the order of summation gives
\begin{eqnarray*}
\sum_{v \in K}  \phi_w(v) \prod_{i=1}^{t} (x_i + (-1)^{v_i})  &  
%\sum \limits_{u \in \FF_2^{r}}  \mathbf{x}^u  \sum_{v \in K} \phi(v)  \prod_{i=1}^{m} (-1)^{(v_i)(u_i+1)}  \\
= & \sum \limits_{u \in \FF_2^{t}}   \mathbf{x}^{u+\mathbf{1}} \sum_{v \in K} \phi_w(v) \phi_{u}(v).
\end{eqnarray*}
%Interpreting $\phi_w$ and $\phi_{\bar{u}}$ as characters of the subgroup $K$,  
Character theory for $K$ implies that
the inner sum equals zero unless $\phi_w = \phi_{u}$ on $K$, in which case it equals $|K| = 2^{s-1}$.  
Now the equality $\phi_w = \phi_{u}$ holds on all of $K$ if and only 
$$w_1+w_2 = u_1+u_2, w_2+w_3 = u_2+u_3, \dots, w_{s-1}+w_s = u_{s-1}+u_s.$$
If $t=s$, this happens if and only if $u = w$ or $u = w + (1,1, \dots, 1)$, giving the first part of the result.

Next consider the case where $t<s$.  Since $u_{t+1}=u_{t+2}=...=u_{s}=0$, a necessary condition for $\phi_w = \phi_u$
is that $w_{t+1}+w_{t+2} = 0$, $w_{t+2}+w_{t+3} = 0, \dots, w_{s-1}+w_{s}=0$.
Since $w_s=0$ by hypothesis, this means that $w_{t+1} = \dots = w_s = 0$ for $\phi_w = \phi_u$ to hold, giving
the third part of the result. %If this fails to hold, the expression is zero.
Moreover, if $\phi_w = \phi_u$, then also 
$u_t =w_t$ since both $w_{t+1}=u_{t+1}=0$.  Continuing in this fashion
$u_{t-1} = w_{t-1}, \dots, u_{1}= w_{1}$.  So the unique solution for $u$ is $u=w$, which is the second part of the result.
\end{proof}

\subsection{Computing the summation.}\label{compute-summation}
Here again we abbreviate $a \equiv b \bmod{2}$ as "$a \equiv b$".
For $\epsilon \in \{ 0, 1\}$, let 
\begin{eqnarray*} 
S^+_{\epsilon}  &:=  & \{ j \in \NN \ | \ j  \equiv \epsilon, \countp_j \equiv 0, \countp_j \neq 0  \} 
%S^+_{\epsilon} & := \{ j \in \NN \ | \ j  \equiv \epsilon, \countp_j \text{ even and nonzero} \} 
\text{ and } \\
S^-_ {\epsilon}  &:=  & \{ j \in \NN \ | \ j \equiv \epsilon,  \countp_j \equiv 1 \} \text{ and } \\
%S^-_ {\epsilon} & := \{ j \in \NN \ | \ j \equiv \epsilon,  \countp_j \text{ odd} \}.
S_{\epsilon} &:=  & S^-_{\epsilon}   \cup S^+_{\epsilon} =  \{ j \in \NN \ | \ j \equiv \epsilon,  \countp_j > 0 \}.
\end{eqnarray*}
%Let $\epsilon =0$ in type $C$ and let $\epsilon= 1$ in types $B$ and $D$.  
For types $B,C,D$, we set $q$ to be a power of an odd prime.

\medskip
{\bf Type} $C$
\medskip

For $\lambda \in \Par_C(2n)$, pick $e \in \0_{\lambda}$.
%We continue to set $\countp_j:=\mu_j(\lambda)$.
%, so that $\ell(\lambda)=|\mu|$ is the number of parts of $\lambda$.
%Let 
%$S^-_0 = \{ j \in \NN \ | \ j \text{ even}, \countp_j \text{ even and nonzero} \}$ 
%and
%$S^-_1 = \{ j \in \NN \ | \ j \text{ even}, \countp_j \text{ odd} \}.$
%Let $q$ be an odd prime.
Then working in $G = \mbox{Sp}_{2n}(\aF_q)$,
a maximal reductive part of the centralizer $Z_G(e)$ is isomorphic to
%$$\prod_{j \equiv 0} \mbox{O}_{\countp_j} (\aF_q) \times \prod_{j \equiv 1} \mbox{Sp}_{\countp_j} (\aF_q).$$
\begin{equation} \label{red_decomp1}
%\prod_{j \equiv 1} \mbox{Sp}_{\countp_j} (\aF_q) \times \prod_{j \in S^-_0 \cup S^-_1} \mbox{O}_{\countp_j} (\aF_q).
\prod_{j \equiv 1} \mbox{Sp}_{\countp_j} (\aF_q) \times \prod_{j \in S_0} \mbox{O}_{\countp_j} (\aF_q).
\end{equation}

Let $A$ be the elementary abelian $2$-group with basis 
%$S^-_0 \cup S^-_1$.  
$S_0 = S^-_0 \cup S^+_0$.  
For $c \in A$, we write
$$c=(c_j)_{j \in S_0} \text{ with } c_j \in \FF_2.$$
We identify $A$ with the component group $A(e)$, where 
$(c_j) \in A$ corresponds to taking an element of determinant $(-1)^{c_j}$ in the orthogonal group in (\ref{red_decomp1}) indexed by $j$, 
for each $j \in S_0$.%$j \in S^-_0 \cup S^-_1$.

Now we assume that $e \in \ggg^F$ and has split centralizer.  For $c \in A(e)$, 
we twist $e$ by $c$ to get another 
rational element $e_c \in \orbit_e$.  In this way we obtain representatives from all 
the $G^F$-orbits on $\0^F_e$.
Under our identification of $A$ and $A(e)$, 
the group of rational points in a maximal reductive subgroup of $Z_G(e_c)$ is isomorphic to
$$\prod_{j \equiv 1} \mbox{Sp}_{\countp_j} (\FF_q) \times \prod_{j \in S^-_0} \mbox{O}_{\countp_j} (\FF_q) \times 
\prod_{j \in S^+_0} \mbox{O}^{c_j}_{\countp_j} (\FF_q),$$
where the groups in the last product are either split or twisted orthogonal groups of type $D$ %$D_{\frac{\mu_j}{2}}$ 
depending on whether $c_j$ is equal to $0$ or $1$, respectively;  see Shoji \cite[\S 1]{Shoji}.

At this stage another $q$-analogue notation is helpful: 
for a nonnegative integer $n$, let
$$
\begin{aligned}
\eta(N)&:= (q^2-1)(q^4-1) \cdots (q^{2\lfloor\frac{N}{2}\rfloor}-1) \\
&=\begin{cases}
(q^2-1)(q^4-1) \cdots (q^N-1) &\text{ if }N\text{ is even},\\
(q^2-1)(q^4-1) \cdots (q^{N-1}-1) &\text{ if }N\text{ is odd}.\\
\end{cases}
\end{aligned}
$$
or in other words,
$$
\eta(2m+1) = \eta(2m) = \prod_{i=1}^m (q^{2i} - 1).
$$
The cardinality of $Z_{G^F}(e_c)$ is therefore (see, e.g., Carter \cite[\S 2.9, p. 75]{Carter})

\begin{equation} \label{points_C}
q^{d^u} \cdot |A(e)|
\prod_{j \equiv 1} \eta(\countp_j)
\prod_{j \in S^-_0} \eta(\countp_j)
\prod_{j \in S^+_0} (q^{\frac{\countp_j}{2}} - (-1)^{c_j}) \cdot \eta(\countp_j -2). 
\end{equation}
%$$\prod_{\stackrel{j \equiv 0}{ \countp_j \equiv 1} } [\countp_j -1]_2! \prod_{j \equiv 1} [\countp_j ]_2!
%\prod_{\stackrel{j \equiv 0}{\countp_j \equiv 0}} [\countp_j -2]_2! (q^{\frac{\countp_j}{2}} - c_i)$$ equals
%Here, $c_j = (-1)^{x_j}$.
Getting a common denominator over all the conjugacy classes in $A(e)$, we obtain
\begin{eqnarray}  \label{c_sum1}
\sum_c \frac{ \phi(c)}{|Z_{G^F}(e_c)|} =  \frac{\sum_c \phi(c) \prod_{j \in S^+_0} (q^{ \frac{\countp_j}{2}} + (-1)^{c_j})}
{ q^{d^u} |A(e)| \prod_{j} \eta(\countp_j)}. % \cdot \frac{1}{}.
\end{eqnarray}
%now d_u instead of d_e,where $d_e$ equals the dimension of the unipotent radical of a Borel subgroup of $Z_G(e)$.
To evaluate this sum we use Lemma \ref{full_lemma} with $x_i = q^{\frac{\countp_i}{2}}$, $s = |S_0|$, 
and $t = |S^+_0|$.
Choose $w \in A$ to be the unique element so that $\phi = \phi_w$.
Then by the lemma the expression in \eqref{c_sum1}
%\begin{eqnarray}  \label{c_sum2}
%\sum_c \frac{ \phi(c)}{|Z_{G^F}(e_c)|}
%\end{eqnarray}
equals $0$ if $w_j \neq 0$ for any $j \in S^-_0$,
and 
\begin{eqnarray}  \label{c_sum3}
\sum_c \frac{ \phi(c)}{|Z_{G^F}(e_c)|} =  \frac{q^{ -d^u +  \sum_{j \in S^+_0, w_j = 0} \frac{\countp_j}{2}} }
{ \prod_{j} \eta(\countp_j)} % \cdot \frac{1}{ q^{d^u}}
\end{eqnarray}
if $w_j = 0$ for all $j \in S^-_0$. 
The value of $d^u$ is computed in Section~\ref{value-of-d-section}.

\medskip
{\bf Type $B$}
\medskip

An important feature is that $S^-_1$ is always non-empty, since $|\lambda|$ is odd.
In type $B_n$ we work with $G = \mbox{SO}_{2n+1}(\aF_q)$ and thus
a maximal reductive subgroup of $Z_G(e)$ is isomorphic to
the determinant one elements in 
\begin{equation} \label{red_decompB}
\prod_{j \equiv 0} \mbox{Sp}_{\countp_j} (\aF_q) \times \prod_{j \in S_1} \mbox{O}_{\countp_j} (\aF_q).
\end{equation}

Here, we define $A$ to be the elementary abelian $2$-group with basis %$S^+_0 \cup S^+_1$, with
$S_1$, with
elements written as $$c=(c_j)_{j \in S_1},$$ where $c_j \in \FF_2$.
Let $K$ be the subgroup of $A$ consisting of elements $(c_j)$ with $\sum c_j = 0$.
We identify $K$ with the component group $A(e)$, where 
$(c_j) \in K$ corresponds to taking an element of determinant $(-1)^{c_j}$ in the orthogonal group in (\ref{red_decompB}) indexed by $j$ for each 
%$j \in S^+_0 \cup S^+_1$.
$j \in S_1$.

Keeping the same notation as in type $C$, 
%We fix a split $e$ and $c \in A(e)$ and twist $e$ by $c$ to get another rational representative $e_c$ of the orbit $\orbit_e$.
%Under our identification of $K$ and $A(e)$, $c$ corresponds to $(c_j) \in K$.
the group of rational points in a maximal reductive subgroup of $Z_G(e_c)$ is isomorphic to
$$\prod_{j \equiv 0} \mbox{Sp}_{\countp_j} (\FF_q) \times \prod_{j \in S^-_1} \mbox{O}_{\countp_j} (\FF_q) \times 
\prod_{j \in S^+_1} \mbox{O}^{c_j}_{\countp_j} (\FF_q).$$
%where the last groups in the product are either split or twisted orthogonal groups of type $D$ depending on 
%whether $c_j$ is equal to $0$ or $1$.
The cardinality of $Z_{G^F}(e_c)$ is therefore
%The number of points in a maximal reductive part of $Z_{G^F}(e_c)$ is therefore
\begin{equation} \label{points_B}
q^{d^u} \cdot |A(e)|
\prod_{j \equiv 0} \eta(\countp_j)
\prod_{j \in S^-_1} \eta(\countp_j)
\prod_{j \in S^+_1} (q^{\frac{\countp_j}{2}} - (-1)^{c_j}) \cdot \eta(\countp_j -2) .
\end{equation}
%$$\prod_{\stackrel{j \equiv 0}{ \countp_j \equiv 1} } [\countp_j -1]_2! \prod_{j \equiv 1} [\countp_j ]_2!
%\prod_{\stackrel{j \equiv 0}{\countp_j \equiv 0}} [\countp_j -2]_2! (q^{\frac{\countp_j}{2}} - c_i)$$ equals
%%multiplied by an appropriate power of $q$.  
%Here, $c_j = (-1)^{x_j}$.
Getting a common denominator over all the conjugacy classes in $A(e)$, we obtain
\begin{eqnarray}  \label{b_sum1}
\sum_c \frac{ \phi(c)}{|Z_{G^F}(e_c)|} =  \frac{\sum_c \phi(c) \prod_{j \in S^+_1} (q^{ \frac{\countp_j}{2}} + (-1)^{c_j})}
{q^{d^u} |A(e)| \prod_{j} \eta(\countp_j)}.% \cdot \frac{1}{ }.
\end{eqnarray}

To evaluate the sum we use Lemma \ref{even_lemma} with $x_i = q^{\frac{\countp_i}{2}}$, $s = |S_1|$,
and $t = |S^+_1|$.  Note that $t<s$ since $S^-_1$ is non-empty, so the first scenario in the lemma never occurs.
Write $\phi$  as $\phi_w$ for $w \in A$ with $w_i=0$ for some $i \in S^-_1$;  such a $w$ is unique.
Now if $w_j \neq 0$ for any $j \in S^-_1$, then the third scenario in the lemma applies and the expression in \eqref{b_sum1} equals zero.
%\begin{eqnarray}  \label{b_sum2}
%\sum_c \frac{ \phi(c)}{|Z_{G^F}(e_c)|} =  0.
%\end{eqnarray}
On the other hand, if $w_j = 0$ for all $j \in S^-_1$, then since $t<s$, the second scenario of the lemma gives
\begin{eqnarray}  \label{b_sum3}
\sum_c \frac{ \phi(c)}{|Z_{G^F}(e_c)|} =  \frac{q^{ -d^u + \sum_{j \in S^+_1, w_j = 0} \frac{\countp_j}{2}} }
{ \prod_{j} \eta(\countp_j)}.% \cdot \frac{1}{ q^{d^u}}
\end{eqnarray}
The value of $d^u$ is computed in Section~\ref{value-of-d-section}.

\medskip
{\bf Type $D$}
\medskip

%*****s=0 is the very even case***
We proceed as in type $B$ and write $\phi = \phi_w$ for the unique 
%$$w \in \displaystyle \bigoplus_{j \in S^+_0} \FF_2 \oplus \displaystyle  \bigoplus_{j \in S^+_1} \FF_2$$ 
$w \in A$ with $w_i = 0$ for some $i \in S^-_1$ when $|S^-_1| > 0$.
%We retain the notation introduced in the type $B$ case.  
Now, $|\lambda|$ is even and so $|S^-_1|$ is even, but it could be zero. Therefore, 
in evaluating the sum $\sum_c \frac{ \phi(c)}{|Z_{G^F}(e_c)|}$ all three scenarios in Lemma \ref{even_lemma} can occur.  When $s >0$, we 
therefore have
\begin{align}  \label{d_sum3}
\sum_c \frac{ \phi(c)}{|Z_{G^F}(e_c)|} =  \frac{q^{-d^u}}{ \prod_{j} \eta(\countp_j) } \cdot
\begin{cases}
q^{ \sum_{j \in S^+_1, w_j = 1} \frac{\countp_j}{2}} + q^{ \sum_{j \in S^+_1, w_j = 0} \frac{\countp_j}{2} } & \text{if } |S^-_1|=0 \\
q^{ \sum_{j \in S^+_1, w_j = 0} \frac{\countp_j}{2}} & \text{if } |S^-_1| > 0, w_j = 0 \text{ for all } j \in S^-_1\\
0    & \text{otherwise}
\end{cases}
%\number
\end{align}

\begin{remark} \label{even_orbs}
The case of $s=0$ is equivalent to the partition $\lambda$ having only even parts.   In such cases, there are two $\mbox{SO}_{2n}(\aF_q)$-orbits corresponding to the same 
$\lambda$.  In each of these cases, $A(e)$ is trivial and the formula for $|S^-_1|=0$ above is correct if we interpret it as the sum over two elements, $e_1$ and $e_2$, one in each of the two  $\mbox{SO}_{2n}(\aF_q)$-orbits:  $\frac{1}{|Z_{G^F}(e_1)|} + \frac{1}{|Z_{G^F}(e_2)|}$.  
In other words, the value, when multiplied by $|G|$, gives the number of points in the $\mbox{O}_{2n}(\aF_q)$-orbit through either element.
\end{remark}

\subsection{Value of $d^u$}
\label{value-of-d-section}

Recall that  $d^u$ is the dimension of a maximal unipotent subgroup of $Z_G(e)$.
Let $d^u_1$ be the dimension of the unipotent radical of $Z_G(e)$ and $d^{u}_2$ 
the dimension of a maximal unipotent subgroup of the reductive part $Z_{red}$ of $Z_G(e)$.  
Then $d^u = d_1^u + d_2^u$.  
Since $d^u_2$ is the number of positive roots for $Z_{red}$, we can compute its value 
from the known type of $Z_{red}$ given previously.  The value of $d^{u}_1$ can be found in \cite[pp. 398-9]{Carter}.
Recall that $\dualp$ is the dual partition for $\lambda$
and that $c(\lambda):=\sum_j \lambda'_j \lambda'_{j+1}$.

\medskip
\noindent  {\bf Type} $C$.
\medskip

We have
$$d^{u}_1  = \frac{1}{2} \left( \sum (\dualp_j)^2 - \sum \countp_j^2 + \sum_{j \equiv 0} \countp_j \right) \text{ and }$$
\begin{eqnarray*}
%d^{u}_e  =   \sum_{j \equiv 1} (\frac{\mu_j}{2})^2 + \sum_{\stackrel{j \equiv 0}{ \countp_j \equiv 1}}  (\frac{\mu_j-1}{2})^2 +
%\sum_{\stackrel{j \equiv 0}{\countp_j \equiv 0}}  ((\frac{\mu_j}{2})^2  - \frac{\mu_j}{2} )
% & = & \\
%\frac{1}{4} \sum \mu_j^2  - \frac{1}{2} \sum_{j \equiv 0} \mu_j+  \frac{ L(\lambda) }{4},  \label{pos_roots_C}
d^{u}_2 =   
%\sum_{j \equiv 1} (\frac{\mu_j}{2})^2 + \sum_{\stackrel{j \equiv 0}{ \countp_j \equiv 1}}  (\frac{\mu_j-1}{2})^2 +
%\sum_{\stackrel{j \equiv 0}{\countp_j \equiv 0}}  ((\frac{\mu_j}{2})^2  - \frac{\mu_j}{2} )
\sum_{j \equiv 1} \frac{\mu^2_j}{4} + \sum_{\stackrel{j \equiv 0}{ \countp_j \equiv 1}}  \frac{(\mu_j-1)^2}{4} +
\sum_{\stackrel{j \equiv 0}{\countp_j \equiv 0}}  \left(\frac{\mu^2_j}{4}  - \frac{\mu_j}{2} \right)
  = \frac{1}{4} \sum \mu_j^2  - \frac{1}{2} \sum_{j \equiv 0} \mu_j+  \frac{ L(\lambda) }{4},  \label{pos_roots_C}
\end{eqnarray*}
where $L(\lambda)$ is the number of $\countp_j$ that are odd. 
Hence, since $\lambda'_1 = \ell(\lambda)$, 
\begin{eqnarray}
\nonumber
%d_e = d^{u}_e + d^{ra}_e & =  & 
d^u = d^{u}_1 + d^{u}_2 & =  & 
{ \frac{1}{2} \sum (\dualp_j)^2 - \frac{1}{4} \sum \mu_j^2 + \frac{ L(\lambda) }{4}} \\ \nonumber
& = &  { \frac{1}{2} \sum (\dualp_j)^2 - \frac{1}{4} \sum (\dualp_j  -\dualp_{j+1})^2 + \frac{ L(\lambda) }{4}} \\
& = &  \frac{\ell(\lambda)^2}{4} + \frac{c(\lambda)}{2} + \frac{ L(\lambda) }{4}.
\end{eqnarray}

\medskip
\noindent {\bf Type} $B$ and $D$.
\medskip

We have
$$d^{u}_1  =\frac{1}{2} \left( \sum (\dualp_j)^2 - \sum \countp_j^2 - \sum_{j \equiv 0} \countp_j \right) \text{  and }
d^{u}_2  = \frac{1}{4} \sum \countp_j^2 - \frac{1}{2} \sum_{j \equiv 1} \countp_j + \frac{ L(\lambda) }{4}.$$
 Hence, since $\sum  \countp_j = \ell(\lambda)$,
 \begin{eqnarray}
\nonumber
d^u = d^{u}_1 + d^{u}_2  & =  & 
 \frac{1}{2} \sum (\dualp_j)^2 - \frac{1}{4} \sum \mu_j^2 + \frac{ L(\lambda) }{4} - \frac{\ell(\lambda)}{2} \\
 %\nonumber
  & =  &  \frac{\ell(\lambda)^2}{4} - \frac{\ell(\lambda)}{2}  + \frac{c(\lambda)}{2} + \frac{ L(\lambda) }{4}.   \label{BD_d_u}
  %& = &  \binom{\ell(\lambda)}{2} + \frac{c(\lambda)}{2} + \frac{ L(\lambda) }{4}
 \end{eqnarray}
%where we have used the identity $\sum  \countp_j = \ell(\lambda)$ and the same simplication as in type $C$.

%%%%%%%%%%%%%%%%%%%%%%%%%%%%%%%%%%%%%%%%%%%%%%%%%%%%%%%%%%%%%%%%%%%
\subsection{Finishing the $f_{e, \phi}$ calculation in types $B, C, D$}   
\label{BCD_calcs}
%%%%%%%%%%%%%%%%%%%%%%%%%%%%%%%%%%%%%%%%%%%%%%%%%%%%%%%%%%%%%%%%%%%

We first handle types $B_n$ and $C_n$.
%Set $\epsilon = 1$ in type $B$ and $\epsilon =0$ in type $C$.
%As before,  $e \in \0_{\lambda}$, with
%$\lambda$ in $\Par_B(2n+1)$ or $\Par_C(2n)$ according to whether $\ggg$ is of type $B_n$ or type $C_n$, respectively.
By \cite{LeSh} and \cite{Spalt}, we have
\begin{equation}
\label{type-BC-kappa-formula}
\kappa = \left\lfloor \frac{\ell(\lambda)}{2} \right\rfloor \text{ and }
\end{equation}
%and
$$
(m_1, \dots, m_{\kappa})=(1, 3, \dots, 2\kappa-1).
$$
%then there are
%\begin{equation}
%\label{type-BC-kappa-formula}
%\kappa = \left\lfloor \frac{\ell(\lambda)}{2} \right\rfloor
%\end{equation}
%occurrences of the representation $V$ in the cohomology
%of the Springer fiber $\BBB_{e_\lambda}$, and
%$$
%(m_1, \dots, m_{\kappa})=(1, 3, \dots, 2\kappa-1).
%$$
Moreover, $\pi_{\kappa}$ is always trivial (see \cite{Sommers2}), so $f_{e, \phi}$ is computed by \eqref{formula:f_easy}, which becomes
\begin{equation} \label{BC_almost}
f_{e, \phi} = 
%q^{m(n - \kappa)+  \kappa^2} \prod_{j=1}^{\kappa} \big (q^{m-(2j-1)} - 1 \big)
q^{m(n - \sBC)+   \sBC^2} \prod_{j=1}^{\sBC} \big (q^{m-(2j-1)} - 1 \big)
\big(  \sum_c  \frac{ \phi(c)} {|Z_{G^F}(e_c)|} \big).
\end{equation}

%We introduce some notation.  
%Let $\epsilon =0$ in type $C$ and let $\epsilon= 1$ in types $B$ and $D$.  
We introduce the following notation 
%for the sum in the exponent
%of $q$ that appears in the previous formulas,
\begin{equation}  \label{beta}
\beta_{\epsilon}(\lambda, w) :=  \sum_{\stackrel{ j \equiv \epsilon, \countp_j \equiv 0}{w_j = 0}} \frac{\countp_j}{2}
\end{equation}
%and %define 
%$\delta(\lambda) := \sBC^2 - \frac{\ell(\lambda)^2}{4}$, which equals
%$0$ when $\ell(\lambda)$ is even and $\frac{1}{4}-\frac{\ell(\lambda)}{2}$ when $\ell(\lambda)$ is odd.
and recall from the introduction that 
$$
\delta(\lambda) := \sBC^2 - \frac{\ell(\lambda)^2}{4} =
\begin{cases}
0 & \text{ for }\ell(\lambda)\text{ even}, \\
\frac{1}{4}-\frac{\ell(\lambda)}{2} & \text{ for }\ell(\lambda)\text{ odd}.
\end{cases}
$$
%$$\beta_0(\lambda, w; \epsilon) = \sum_{\stackrel{ j \equiv \epsilon, \countp_j \equiv 0}{w_j = 0}} \frac{\countp_j}{2}
%\,\, \text{  and    } \,\,
%\beta_1(\lambda, w; \epsilon) = \sum_{\stackrel{ j \equiv \epsilon, \countp_j \equiv 0}{w_j = 1}} \frac{\countp_j}{2}.$$
and $\hat{N}:=\lfloor N/2 \rfloor$, and
for %a sequence
$\nu=(\nu_1,\nu_2,\ldots)$ that
$\hat{\nu}:=(\hat{\nu}_1, \hat{\nu}_2, \ldots)$.

\begin{proposition}  \label{BC_most_general}
Write $\phi=\phi_w$ as in Section~\ref{compute-summation}.  
For $\ggg$ of type $B_n$ or $C_n$,  then $f_{e,\phi}$ equals zero unless $w_j = 0$ for all $j \in S^{-}_{\epsilon}$, where $\epsilon = 0$ for type $C$ and 
$\epsilon = 1$ for type $B$, 
in which case  $f_{e,\phi}$ equals
$$
%q^{m\left( n-\sBC \right) 
q^{m\left( n-\hat{\ell}(\lambda) \right) + \frac{1}{4} -  \frac{c(\lambda)}{2} - \frac{ L(\lambda) }{4}  + \beta_{1}(\lambda, w)} \cdot \prod_{i=1}^{\hat{L}(\lambda)} (q^{m-2i+1}-1)
\qbin{\hat{m}-\hat{L}(\lambda)}{\hat{\mu}(\lambda)}{q^2}   \text{ (Type $B_n$)  }   %\text{ if } w_j = 0 \text{ for all } j \in S^{-}_{1}\\
$$
$$
 q^{m\left( n-\hat{\ell}(\lambda) \right) + \delta(\lambda) -  \frac{c(\lambda)}{2} - \frac{ L(\lambda) }{4}  + \beta_{0}(\lambda, w)} \cdot \prod_{i=1}^{\hat{L}(\lambda)} (q^{m-2i+1}-1)
\qbin{\hat{m}-\hat{L}(\lambda)}{\hat{\mu}(\lambda)}{q^2}  \mbox{ (Type $C_n$)  }
$$
\end{proposition}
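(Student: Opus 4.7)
The plan is to substitute everything into equation \eqref{BC_almost}. That formula already provides the overall prefactor $q^{m(n-\hat{\ell}(\lambda)) + \hat{\ell}(\lambda)^2}$ and the product $\prod_{j=1}^{\hat{\ell}(\lambda)}(q^{m-(2j-1)}-1)$, and it reduces the remaining work to evaluating the character sum $\sum_c \phi(c)/|Z_{G^F}(e_c)|$. For type $C$ this sum was already computed in \eqref{c_sum3} and for type $B$ in \eqref{b_sum3}. In both cases the right-hand side is zero unless $w_j = 0$ for every $j \in S^{-}_{\epsilon}$, which immediately yields the vanishing claim. When it is nonzero, the surviving summand is $q^{-d^u + \beta_\epsilon(\lambda,w)}/\prod_j \eta(\mu_j)$.

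Next I would plug in the explicit value of $d^u$ obtained in Section~\ref{value-of-d-section}. In type $C$ this is $d^u = \ell(\lambda)^2/4 + c(\lambda)/2 + L(\lambda)/4$, and since $\delta(\lambda) = \hat{\ell}(\lambda)^2 - \ell(\lambda)^2/4$ by definition, the difference $\hat{\ell}(\lambda)^2 - d^u$ simplifies to $\delta(\lambda) - c(\lambda)/2 - L(\lambda)/4$, which is exactly the non-$m$ part of the advertised exponent. In type $B$ one first observes that $|\lambda| = 2n+1$ together with the parity restrictions forces $L(\lambda)$ to be odd, hence $\ell(\lambda)$ odd, so $\ell(\lambda)^2/4 - \ell(\lambda)/2 = \hat{\ell}(\lambda)^2 - 1/4$; substituting the type-$B$ value of $d^u$ then produces the exponent $1/4 - c(\lambda)/2 - L(\lambda)/4$ as required.

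The last task is the combinatorial rewriting of
\[
\frac{\prod_{i=1}^{\hat{\ell}(\lambda)}(q^{m-2i+1}-1)}{\prod_j \eta(\mu_j)} \;=\; \prod_{i=1}^{\hat{L}(\lambda)}(q^{m-2i+1}-1)\cdot\qbin{\hat{m}-\hat{L}(\lambda)}{\hat{\mu}(\lambda)}{q^2}.
\]
I would split the product in the numerator at the index $\hat{L}(\lambda)$: the first $\hat{L}(\lambda)$ factors are kept verbatim as the stated product. For the remaining factors with $\hat{L}(\lambda)+1 \le i \le \hat{\ell}(\lambda)$, since $m$ is odd I write $m = 2\hat{m}+1$, so $q^{m-2i+1}-1 = q^{2(\hat{m}-i+1)}-1$; re-indexing by $j = \hat{m}-i+1$ turns this into $\prod_{j=\hat{m}-\hat{\ell}(\lambda)+1}^{\hat{m}-\hat{L}(\lambda)}(q^{2j}-1)$. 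Using the identity $\sum_j \hat{\mu}_j = \hat{\ell}(\lambda) - \hat{L}(\lambda)$ and $\eta(\mu_j) = \prod_{i=1}^{\hat{\mu}_j}(q^{2i}-1)$, one recognises the quotient as exactly $\qbin{\hat{m}-\hat{L}(\lambda)}{\hat{\mu}(\lambda)}{q^2}$ (the $(q^2-1)$ factors cancel because the top of the $q$-multinomial equals the sum of its bottom entries).

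The main obstacle is administrative rather than conceptual: one must track the exponent of $q$ through several parity-sensitive rewritings (the role of $\delta(\lambda)$ in type $C$ versus the $1/4$ in type $B$, and the fractional pieces coming from $L(\lambda)/4$) and verify that each fractional term cancels correctly. The combinatorial identity linking the $q$-product with $\prod_j \eta(\mu_j)$ and the $q^2$-multinomial is the only genuinely non-trivial manipulation, but as sketched above it reduces to a clean re-indexing once $m$ is written as $2\hat{m}+1$.
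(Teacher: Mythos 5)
Your proposal is correct and follows essentially the same route as the paper's proof: start from \eqref{BC_almost}, invoke \eqref{c_sum3} and \eqref{b_sum3} for the vanishing criterion and the surviving term $q^{-d^u+\beta_\epsilon(\lambda,w)}/\prod_j\eta(\mu_j)$, substitute the value of $d^u$ from Section~\ref{value-of-d-section}, and rewrite the quotient of $q$-products as $\prod_{i=1}^{\hat{L}(\lambda)}(q^{m-2i+1}-1)\qbin{\hat{m}-\hat{L}(\lambda)}{\hat{\mu}(\lambda)}{q^2}$ via $\hat{\ell}(\lambda)=\hat{L}(\lambda)+|\hat{\mu}(\lambda)|$. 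Your re-indexing argument for that last identity and your explicit parity bookkeeping in type $B$ (using $\ell(\lambda)$ odd) just spell out steps the paper leaves implicit.
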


\begin{proof}
Using \eqref{c_sum3} and \eqref{b_sum3}
and the fact that  
$$\prod_{j=1}^{\hat{\ell}(\lambda) } \big (q^{m-(2j-1)} - 1 \big) =  \frac{\eta(m)}
%      {\eta \left(2\hat{m}-2 \sBC \right) 
      {\eta \left(m- \ell(\lambda) \right)},$$
the expression in \eqref{BC_almost}, when nonzero, equals
\begin{equation} 
f_{e, \phi} = q^{m(n -  \hat{\ell}(\lambda) )+   \hat{\ell}(\lambda) ^2 - d^u + \beta_{\epsilon}(\lambda, w)} \cdot \frac{\eta(m)}
%      {\eta \left(2\hat{m}-2 \sBC \right) 
      {\eta \left(m- \ell(\lambda) \right) 
%          \cdot \eta(\mu_1(\lambda) ) \cdot \eta(\mu_2(\lambda)) \cdots }
          \cdot \eta(\mu_1 ) \cdot \eta(\mu_2) \cdots }.
\end{equation}
Next, we have
%$$\prod_{j=1}^{\sBC} \big (q^{m-(2j-1)} - 1 \big) =  \frac{\eta(m)}
%%      {\eta \left(2\hat{m}-2 \sBC \right) 
%      {\eta \left(m- \ell(\lambda) \right)} $$
%   and so 
\begin{equation} \nonumber
\label{type-BC-multinomial}
%f_{\lambda,m}(q):=\frac{\eta(2\hat{m})}
\frac{\eta(m)}
%      {\eta \left(2\hat{m}-2 \sBC \right) 
      {\eta \left(m- \ell(\lambda) \right) 
%          \cdot \eta(\mu_1(\lambda) ) \cdot \eta(\mu_2(\lambda)) \cdots }
          \cdot \eta(\mu_1 ) \cdot \eta(\mu_2) \cdots }
= \prod_{i=1}^{\hat{L}(\lambda)} (q^{m-2i+1}-1)
\qbin{\hat{m}-\hat{L}(\lambda)}{\hat{\mu}(\lambda)}{q^2},
\end{equation}
since $\hat{\ell}(\lambda) = \hat{L}(\lambda)  +  |\hat{\mu}(\lambda)|$.
The results follow after substituting in the appropriate value of $d^u$ from \S \ref{value-of-d-section}.

%By \eqref{c_sum3} and \eqref{b_sum3}, \eqref{BC_almost} becomes
%\begin{equation} 
%f_{e, \phi} = q^{m(n -  \sBC)+   \sBC^2 - d^u + \beta_{\epsilon}(\lambda, w)} \cdot h_{\lambda,m}(q)
%\end{equation}
%
%The power of $q$ in \eqref{BC_almost} equals 

\end{proof}

Recall from the introduction that 
$$\tau_{\epsilon}(\lambda) = %\frac{1}{2}
\sum_{\substack{j \equiv \epsilon, \countp_j \equiv 0}} \frac{\countp_j}{2},%\mu_j.
$$
which is the value of is $\beta_{\epsilon}(\lambda, 0)$ when $w=0$, which corresponds to the trivial character of $A(e)$.
Thus the results in the Proposition 
simplify to those in Theorem~\ref{q-Kreweras-formula-theorem} (Types $B_n$ and $C_n$).

\begin{comment}

\begin{corollary}
%For $\lambda \in \Par_B(2n+1)$ that
\begin{equation}
\label{type-B-q-Kreweras}
\Krew(B_n,\0_\lambda,m;q) = q^{m\left( n-\hat{\ell}(\lambda)  \right)  + 
\frac{1}{4} -  \frac{c(\lambda)}{2} - \frac{ L(\lambda) }{4} + \tau_1(\lambda)} \cdot \prod_{i=1}^{\hat{L}(\lambda)} (q^{m-2i+1}-1)
\qbin{\hat{m}-\hat{L}(\lambda)}{\hat{\mu}(\lambda)}{q^2} ,
\end{equation}
and %for $\lambda \in \Par_C(2n)$ that
\begin{equation}
\label{type-C-q-Kreweras}
\Krew(C_n,\0_\lambda,m;q) =q^{m\left( n-\hat{\ell}(\lambda)  \right)  + \delta(\lambda) - \frac{c(\lambda)}{2} - \frac{ L(\lambda) }{4}  + \tau_0(\lambda)} \cdot \prod_{i=1}^{\hat{L}(\lambda)} (q^{m-2i+1}-1)
\qbin{\hat{m}-\hat{L}(\lambda)}{\hat{\mu}(\lambda)}{q^2}.
\end{equation}
\end{corollary}

\end{comment}

\medskip

\noindent {\bf Type $D_n$}.

\medskip

\bigskip

We now turn to type $D_n$.  Here the values of $m_1, m_2, \dots, m_{\kappa}$ depend on both $\ell(\lambda)$ and the parity of $\mu_1$ \cite{Spalt}:
\begin{itemize}
\item
When $\mu_1$ is odd, $\kappa =  \frac{\ell(\lambda)}{2} -1$ and 
$$(m_1,m_2,\ldots,m_{\kappa})=(1, 3, \dots, 2\kappa -1).$$
\item
When $\mu_1$ is even, $\kappa =  \frac{\ell(\lambda)}{2}$ and 
$$
(m_1, m_2, \dots, m_{\kappa})=
\left(1, 3, \dots, 2\kappa -3,
\ell(\lambda)  - \frac{\mu_1}{2} -1\right).
$$
\end{itemize}

What complicates the type $D_n$ picture is that $\pi_{\kappa}$ may be non-trivial when $\mu_1$ is even.  
It is always trivial when $\mu_1$ is odd.   
Let us now describe when this happens and what $\pi_{\kappa}$ is.   
To that end, we define a subgroup $H \subset A(e)$.   Suppose that $\mu_1$ is even and nonzero.  Then
$e$ lies in a proper Levi subalgebra $\levi$ of $\ggg$ of type $D$ of semisimple rank $n - \frac{\mu_1}{2}$.   
Now if $\lambda$ contains an odd part different from $1$, then $A(e)$ will be nontrivial and moreover 
the component group of $e$ relative to $\levi$ defines an index two subgroup of $A(e)$, which we denote $H$.
We can now recall
%If $r$ is even and $k \neq r$, then $\lambda$ contains $k-r$ parts equal to $1$ and 
%$e$ lies in a proper Levi subalgebra $\levi$ of $\ggg$ of type $D_{l}$ with $l = n - \frac{k-r}{2}$.   
%If $\lambda$ contains an odd part different than $1$, then $A(e)$ will be nontrivial and moreover 
%the component group of $e$ in $\levi$ defines an index two subgroup of $A(e)$, which we denote $H$.
\begin{proposition}{\cite[Proposition 10]{Sommers2}} \label{me:pij}  
%For $j \in \{ 1, 2, \dots, s-1 \}$, $\pi_j$ is trivial.
If $\mu_1$ is odd or $\mu_1=0$ or $\mu_j = 0$ for all odd $j >1$ , then $\pi_{\kappa}$ is trivial.
%% could say lambda has no odd part different from one *or* lambda has only one odd part
Otherwise, $\pi_{\kappa}$ is the 
nontrivial representation of $A(e)$ which is trivial on the subgroup $H$.
%If $\mu_1$ is even and $\lambda$ has , then $\pi_s$ is trivial unless $k \neq r$, and $\lambda$ has an odd part different from $1$.
%If not, then $\pi_s$ is the 
%nontrivial representation of $A(e)$ which takes value $1$ on $H$.
\end{proposition}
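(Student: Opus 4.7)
The plan is to compute the Frobenius series $\sum_{j \geq 0} q^{j} \langle H^{2j}(\BBB_e), V \rangle_W$ as a (graded) virtual representation of $A(e)$ and isolate the single non-trivial character that can appear. First I recall that irreducible representations of $W(D_n)$ are indexed by unordered pairs of partitions $\{\alpha,\beta\}$ with $|\alpha|+|\beta|=n$ (with $\{\alpha,\alpha\}$ yielding two representations), and that the reflection representation $V$ corresponds to $\{(n-1),(1)\}$. Writing $\rho_{e,\phi}$ for the Springer representation indexed by the pair $(e,\phi)$, the multiplicity I need to track is $\langle Q_{e,\phi}, V \rangle_W$ for each $\phi \in \widehat{A(e)}$.

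The backbone of the argument is a Levi reduction. When $\mu_1$ is even and nonzero, let $\levi$ be the Levi subalgebra of type $D_{n-\mu_1/2}$ containing $e$, so that, inside $\levi$, the element $e$ has partition $\lambda \setminus 1^{\mu_1}$, in which no part equals $1$. The component group $A^{\levi}(e)$ is then exactly the index-two subgroup $H \subset A(e)$. The reflection representation of $W(D_n)$ restricts to the reflection representation of $W(\levi)$ plus a sum of $\mu_1/2$ trivial characters, and Lusztig--Spaltenstein induction (via the $j$-induction functor) relates $\rho_{e,\phi}$ in $W$ to $\rho^{\levi}_{e,\phi|_H}$. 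I would then use Frobenius reciprocity, together with Shoji's explicit form of the Lusztig--Shoji algorithm in type $D$, to translate the multiplicity of $V$ in $Q_{e,\phi}^{\ggg}$ into the multiplicity of the reflection representation of $\levi$ in $Q^{\levi}_{e,\phi|_H}$, plus contributions from trivial summands.

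With that translation in hand, the identification of $\pi_\kappa$ would proceed by induction on rank (the base being the distinguished case, where $\lambda$ has no parts of size $1$). In the distinguished case I would argue that $\pi^{\levi}_\kappa$ is trivial, so the only ``new'' character entering $A(e)$ upon induction to $\ggg$ is the unique non-trivial character of $A(e)/H$, and this accounts for the non-trivial $\pi_\kappa$. For the listed exceptional cases ($\mu_1$ odd, or $\mu_1 = 0$, or $\mu_j = 0$ for all odd $j>1$) I would treat each separately: when $\mu_1$ is odd, $H$ is not well-defined and the Levi reduction of the preceding paragraph is unavailable, but one sees directly from the formula for $\kappa$ and the list $(m_1,\ldots,m_\kappa)=(1,3,\ldots,2\kappa-1)$ that the Orlik--Solomon exponents already account for the whole Frobenius series at the trivial character; the case $\mu_1=0$ is similar; and if $\mu_j = 0$ for all odd $j>1$, then $A(e)/H$ has no ``room'' for a non-trivial character after imposing the determinant-one constraint of $\mbox{SO}$.

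The main obstacle, I expect, will be step two: carrying out the Lusztig--Spaltenstein comparison at the level of graded $W\times A(e)$-characters precisely enough to confirm that (i) exactly one of $\pi_1,\ldots,\pi_\kappa$ is non-trivial, and (ii) it is the character trivial on $H$ rather than some other character of $A(e)$. This requires keeping careful track of the Lusztig--Shoji algorithm for type $D$, in particular distinguishing the very-even partitions (where $A(e)$ is trivial but the orbit splits, cf.\ Remark~\ref{even_orbs}) from the generic case, and verifying that the ``extra'' non-trivial character produced by the induction does not cancel against any contribution coming from $H$.
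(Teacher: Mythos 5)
First, a point of order: the paper does not prove this proposition at all --- it is quoted verbatim from \cite[Proposition 10]{Sommers2}, so there is no internal proof to compare your proposal against; to be acceptable here your argument would have to stand on its own as a complete proof of that external result, and as written it does not.

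What you have written is a plan rather than a proof, and the decisive step is the one you yourself defer as ``the main obstacle.'' Concretely, the Levi comparison you propose cannot, by itself, deliver the conclusion: the induction theorem relating $H^*(\BBB_e)$ for $e\in\levi\subset\ggg$ to the Springer cohomology computed in $\levi$ is equivariant only for the subgroup $H=A^{\levi}(e)\subset A(e)$, so it can only tell you the restriction to $H$ of the characters occurring in $\sum_j q^j\langle H^{2j}(\BBB_e),V\rangle_W$. Both the trivial character of $A(e)$ and the nontrivial character trivial on $H$ restrict to the trivial character of $H$, so the Levi reduction alone cannot distinguish them; you need a second, independent input --- e.g.\ comparing the total multiplicity of $V$ in $H^*(\BBB_e)$ (Spaltenstein's computation) with the multiplicity of $V$ in the $A(e)$-invariants (Lehrer--Shoji, i.e.\ the Orlik--Solomon exponents) --- to detect that a nontrivial character of $A(e)$ occurs at all, and then the $H$-triviality pins down which one. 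Your sketch never sets up this two-sided count, nor does it control the grading through the (ungraded) induction isomorphism, nor does it establish the prerequisite that at most one $\pi_j$ is nontrivial. Finally, your treatment of the case $\mu_1$ odd is circular as stated: you invoke the list $(m_1,\dots,m_\kappa)=(1,3,\dots,2\kappa-1)$ with all $\pi_i$ trivial ``accounting for the whole Frobenius series,'' but the triviality of all the $\pi_i$ in that case is exactly what is to be proved. (The case $\mu_j=0$ for all odd $j>1$ is fine in spirit, since there $A(e)$ is trivial.) So the proposal identifies a reasonable strategy but has a genuine gap at its core.
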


%%don't need this with new format
\begin{comment}
Let 
$$g_{\lambda, m}:= \frac{ \displaystyle \prod_{j=1}^{  \hat{\ell}(\lambda) - 1} (q^{m-2j+1} - 1) }
{  \displaystyle \prod_{j} \eta(\countp_j)   } = 
\begin{cases}
\displaystyle \prod_{j=1}^{\hat{L}(\lambda)-1} (q^{m-2j+1}-1) \cdot
\qbin{\hat{m}+1 - \hat{L}(\lambda)}{\hat{\mu}(\lambda)}{q^2} & \text{ if } \hat{L}(\lambda) \geq 1  \\
%\frac{1}{q^{m+1}-1}\cdot
\tfrac{1}{[\hat{m} +1]_{q^2} }\cdot
\qbin{\hat{m} +1}{\hat{\mu}(\lambda)}{q^2} & \text{ if } \hat{L}(\lambda) = 0.
\end{cases}
$$
\end{comment}

We can now give the formula for $f_{e, \phi}$.

\begin{proposition}  
\label{D_most_general}
Write $\phi = \phi_w$ as in the Type D subsection of Section~\ref{compute-summation}.
%. for the unique 
%$$w \in \displaystyle \bigoplus_{j \in S^+_0} \FF_2 \oplus \displaystyle  \bigoplus_{j \in S^+_1} \FF_2$$ 
%with $w_j = 0$ for at least one $j \in S^+_1$ when $|S^+_1| > 0$.
Then $f_{e, \phi} = 0$ unless $w_j = 0$ for all $j \in S^-_1$.
Otherwise, 
\begin{comment}   %% skip over this part and put in proof
\begin{equation*} \nonumber
f_{e, \phi} = q^{
\BCDexponent
%m(n - \hat{\ell}(\lambda)) 
%- \frac{c(\lambda)}{2} - \frac{ L(\lambda) }{4}
+ \beta_1(\lambda, w)} \cdot g_{\lambda, m}
\text{ multiplied by } \\
\end{equation*} 

\begin{eqnarray} \label{Factor}
\begin{cases}
q^{
m  + 1 - \hat{\ell}(\lambda)
}
& \text{ if } \mu_1 \text{ is odd}  \\
%q^{
%m(n - \frac{\ell(\lambda)}{2} + 1)  + 1 
%- \frac{\ell(\lambda)}{2}
%- \frac{c(\lambda)}{2} - \frac{ L(\lambda) }{4}
%+ \beta_1(\lambda, w)}
%\text{ if } \mu_1 \text{ is odd} 
q^{
m  + 1 - \hat{\ell}(\lambda)
}
-
q^{
\hat{\ell}(\lambda)  +  \mu_1(w_1-1)
}
%q^{\hat{\ell}(\lambda) - \frac{\mu_1}{2} (q^{m-  \ell(\lambda) + \frac{\mu_1}{2}+1}    -   q^{\frac{\mu_1}{2} (-1)^{w_1+1}  })}
& \text{ if } \mu_1 \text{ is even and } \hat{L}(\lambda) \geq 1 \\  %  yes, should be 1, not 2-- that's without \hat 
%(q^{
%m  + 1 - \hat{\ell}(\lambda)
%}
%\left(
%1 + q^{\tau_1(\lambda) - 2\beta_1(\lambda,w)}
%\right)
%- \\
%\hspace{.7in} q^{
%\hat{\ell}(\lambda)
%}
%\left( q^{ \mu_1(w_1-1)} + q^{ -\mu_1 w_1 +  \tau_1(\lambda) - 2\beta_1(\lambda, w )} \right)  \\
\left(
q^{
m  + 1 - \hat{\ell}(\lambda)
}
-
q^{
\hat{\ell}(\lambda)  +  \mu_1(w_1-1)
}
\right)
+  \\
\hspace{.3in} 
q^{\tau_1(\lambda) - 2\beta_1(\lambda,w)}
\left(
q^{
m  + 1 - \hat{\ell}(\lambda)
}
-
q^{
\hat{\ell}(\lambda)  - \mu_1 w_1
}
\right)

%& \text{ if } \mu_1 \text{ is even and } \hat{L}(\lambda) = 0  \\
& \text{ if } \hat{L}(\lambda) = 0  
\end{cases}
\end{eqnarray} 

\end{comment}

%%%%%%%%%%%%%%
%%
%%  Full story with all cases
%%%%%%%

\begin{equation*} \nonumber
f_{e, \phi} = q^{
\BCDexponent
%m(n - \hat{\ell}(\lambda)) 
%- \frac{c(\lambda)}{2} - \frac{ L(\lambda) }{4}
+ \beta_1(\lambda, w)}  % \cdot g_{\lambda, m}
\text{ multiplied by } \\
\end{equation*} 

\begin{eqnarray*} \label{Factor}
\begin{cases}
q^{
m  + 1 - \hat{\ell}(\lambda)
} \cdot \displaystyle \prod_{j=1}^{\hat{L}(\lambda)-1} (q^{m-2j+1}-1) \cdot
\qbin{\hat{m}+1 - \hat{L}(\lambda)}{\hat{\mu}(\lambda)}{q^2} 
& \text{ if } \mu_1 \text{ is odd}  \\

%%% Next

q^{\hat{\ell}(\lambda) -\mu_1}
\displaystyle \prod_{j=1}^{\hat{L}(\lambda)} (q^{m-2j+1}-1) \cdot 
\qbin{\hat{m}-\hat{L}(\lambda)}{\hat{\mu}_{\geq 2}(\lambda)}{q^2} 
%%%\footnote{correction here: Vic should double-check}
\qbin{\hat{m}+ 1- \hat{L}(\lambda) -|\hat{\mu}_{\geq 2}(\lambda)|}
     {\hat{\mu}_1(\lambda)} {q^2} 
& \text{ if } \mu_1 \text{ is even, } w_1 = 0, \text{ and } \hat{L}(\lambda) \geq 1 \\  

%%%% Next

q^{\hat{\ell}(\lambda) } 
\displaystyle \prod_{j=1}^{\hat{L}(\lambda)} (q^{m-2j+1}-1)  \cdot
\qbin{\hat{m} - \hat{L}(\lambda)}{\hat{\mu} (\lambda)}{q^2}  
& \text{ if } \mu_1 \text{ is even, } w_1 = 1, \text{ and } \hat{L}(\lambda) \geq 1 \\  

%%%%% Next
q^{\hat{\ell}(\lambda) + \tau_1(\lambda) - 2\beta_1(\lambda,w)}
  \qbin{\hat{m}}{\hat{\mu}(\lambda)}{q^2} 
  +
q^{\hat{\ell}(\lambda) -  \mu_1}
  \qbin{\hat{m}}{\hat{\mu}_{\geq 2}(\lambda)}{q^2} 
\qbin{\hat{m}+1-|\hat{\mu}_{\geq 2}(\lambda)|}
     {\hat{\mu}_1(\lambda)} {q^2} 
%NOTE: $\tau(\lambda) \geq \mu_1$;
& \text{ if } w_1=0 \text{ and } \hat{L}(\lambda) = 0    \\

%%%%% Next
\nonumber q^{\hat{\ell}(\lambda) } 
\qbin{\hat{m} }{\hat{\mu}(\lambda)}{q^2}
+
q^{\hat{\ell}(\lambda) - \mu_1 + \tau_1(\lambda) -2 \beta_1(\lambda,w)}
\qbin{\hat{m} }{\hat{\mu}_{\geq 2} (\lambda)}{q^2}  
 \qbin{\hat{m}+1 -  |\hat{\mu}_{\geq 2}(\lambda)|}{\hat{\mu}_{1}(\lambda)}{q^2}
& \text{ if } w_1=1 \text{ and } \hat{L}(\lambda) = 0    \\
\end{cases}
\end{eqnarray*} 
In fact, since in the last case $w$ can always be taken to have $w_1=0$, we only need the first formula.
\end{proposition}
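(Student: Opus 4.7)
I will specialize the general formula \eqref{formula:f} to type $D$ using three ingredients: the Spaltenstein data $(\kappa, m_1,\ldots,m_\kappa)$ listed immediately before the proposition together with the description of $\pi_\kappa$ in Proposition~\ref{me:pij}; the value of $d^u$ computed in \S\ref{value-of-d-section}; and the evaluation \eqref{d_sum3} of $\sum_c \phi(c)/|Z_{G^F}(e_c)|$ established in \S\ref{compute-summation}. The vanishing condition on $w$ will follow directly from the third branch of \eqref{d_sum3}.

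The argument splits into three main cases according to the parity of $\mu_1$ and the triviality of $\pi_\kappa$. When $\mu_1$ is odd, $\pi_\kappa$ is trivial by Proposition~\ref{me:pij}, so the simpler \eqref{formula:f_easy} applies with $\kappa=\hat{\ell}(\lambda)-1$ and $m_j=2j-1$; collecting exponents gives the prefactor $q^{\BCDexponent+\beta_1(\lambda,w)+m+1-\hat{\ell}(\lambda)}$, and the remaining ratio $\prod_{j=1}^{\hat{\ell}(\lambda)-1}(q^{m-2j+1}-1)/\prod_j \eta(\mu_j)$ rearranges into $\prod_{j=1}^{\hat{L}(\lambda)-1}(q^{m-2j+1}-1)\cdot \qbin{\hat{m}+1-\hat{L}(\lambda)}{\hat{\mu}(\lambda)}{q^2}$ by the same $\eta$-identities used in the proof of Proposition~\ref{BC_most_general}, together with the observations $\ell(\lambda)=L(\lambda)+2|\hat{\mu}(\lambda)|$ and $\eta(\mu_j)=\eta(2\hat{\mu}_j)$. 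When $\mu_1$ is even but $\pi_\kappa$ is still trivial (the two Proposition~\ref{me:pij} scenarios $\mu_1=0$ or no odd $j>1$ has $\mu_j>0$), the same recipe applies with $\kappa=\hat{\ell}(\lambda)$ and final exponent $m_\kappa=\ell(\lambda)-\mu_1/2-1$; this shift in $m_\kappa$ produces the extra $q^{\hat{\ell}(\lambda)-\mu_1}$ factor. When $\mu_1$ is even and $\pi_\kappa$ is nontrivial, $\pi_\kappa$ is a character, so $d_\kappa=1$ and the inner sum in \eqref{formula:f} collapses to $q^{m-m_\kappa}\sum_c \phi(c)/|Z_{G^F}(e_c)|-\sum_c \phi(c)\pi_\kappa(c)/|Z_{G^F}(e_c)|$, and both pieces are handled by \eqref{d_sum3} (the second with $\phi$ replaced by $\phi\cdot\pi_\kappa$).

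The $\hat{L}(\lambda)=0$ subcase engages the first branch of \eqref{d_sum3}, which contributes two monomials proportional to $q^{\beta_1(\lambda,w)}+q^{\tau_1(\lambda)-\beta_1(\lambda,w)}$; this is where the two-term formulas in the last two lines of the proposition originate. The main technical obstacle will be the nontrivial $\pi_\kappa$ case: one must identify $\pi_\kappa=\phi_{w^\ast}$ for the specific $w^\ast$ encoding the sub-Levi $\levi$ (using that $\pi_\kappa$ is the unique nontrivial character of $A(e)/H$, and noting that the hypotheses force $1\in S_1^+$, so $w^\ast$ is supported at $j=1$), then combine the two sums with the correct signs and exponents so that the telescoping matches the stated formulas. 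The final observation, that $w_1=0$ may be assumed when $\hat{L}(\lambda)=0$, follows because $w$ and $w+(1,1,\ldots,1)$ define the same character on $K$, so the normalization in \S\ref{compute-summation} can be chosen with $w_1=0$.
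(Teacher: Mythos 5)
Your proposal is correct and follows essentially the same route as the paper's proof: specialize the general formula \eqref{formula:f} (or \eqref{formula:f_easy}) using Spaltenstein's exponents, the value of $d^u$ from \S\ref{value-of-d-section}, and the evaluation \eqref{d_sum3}, identify $\pi_\kappa$ as the character $\phi_{w^\ast}$ flipping the parity of $w_1$, and rearrange via the same $q$-binomial and $\eta$ identities, with the vanishing criterion coming from the third branch of \eqref{d_sum3}. The only difference is organizational: you split the $\mu_1$-even case according to whether $\pi_\kappa$ is trivial, whereas the paper treats both subcases uniformly using that $\pi_\kappa$ is one-dimensional, which changes nothing of substance.
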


\begin{proof}

When $\mu_1$ is odd, then $\pi_{\kappa}$ is trivial, so formula \eqref{formula:f} becomes  
\begin{eqnarray}  \label{formula:d, odd}
\nonumber 
f_{e, \phi} & =  & q^{m(n -  \frac{\ell(\lambda)}{2} + 1)+ ( \frac{\ell(\lambda)}{2} - 1)^2} \prod_{j=1}^{\hat{\ell}(\lambda) - 1} (q^{m-(2j-1)} - 1)
 \sum_c  \frac{ \phi(c)} {|Z_{G^F}(e_c)|}    \\
\nonumber   & = &  q^{m(n -  \frac{\ell(\lambda)}{2} + 1)+ ( \frac{\ell(\lambda)}{2} - 1)^2 }  
	\frac{\eta(m)}  {\eta \left(m+1- \ell(\lambda) \right) } \cdot
     \frac{ q^{- d^u + \beta_1(\lambda,w)}  }{\eta(\mu_1 ) \cdot \eta(\mu_2) \cdots }
 \end{eqnarray}
using the second part of \eqref{d_sum3} since $|S^-_1| > 0$.  
The result follows from the formula for $d^u$  in \eqref{BD_d_u} % and the definition of $\beta_1(\lambda,w)$ in \eqref{beta}.
and the identity
\begin{equation}   \label{D_version_multinomial}
\frac{\eta(m)}
      {\eta \left(m+1- \ell(\lambda) \right) 
          \cdot \eta(\mu_1 ) \cdot \eta(\mu_2) \cdots }
= 
\begin{cases}
\displaystyle \prod_{j=1}^{\hat{L}(\lambda)-1} (q^{m-2j+1}-1) \cdot
\qbin{\hat{m} - ( \hat{L}(\lambda) -1)}{\hat{\mu}(\lambda)}{q^2} & \text{ if } \hat{L}(\lambda) \geq 1  \\
\tfrac{1}{[\hat{m} +1]_{q^2} }\cdot
\qbin{\hat{m} +1}{\hat{\mu}(\lambda)}{q^2} & \text{ if } \hat{L}(\lambda) = 0,
\end{cases}
\end{equation}
which uses the equality $\hat{\ell}(\lambda) = \hat{L}(\lambda)  +  |\hat{\mu}(\lambda)|$.

Next when $\mu_1$ is even, formula \eqref{formula:f} becomes 
\begin{equation} \label{formula:d, even}
f_{e, \phi} = 
q^{m(n -  \frac{\ell(\lambda)}{2})+  \frac{\ell(\lambda)^2}{4} - \frac{\mu_1}{2}}
 \prod_{j=1}^{\hat{\ell}(\lambda) - 1} (q^{m-(2j-1)} - 1)
 \cdot \left(  q^{m- (\ell(\lambda) - \frac{\mu_1}{2} - 1)}  \sum_c  \frac{ \phi(c)} {|Z_{G^F}(e_c)|}  -  \sum_c   \frac{ \pi_{\kappa}(c) \phi(c)} {|Z_{G^F}(e_c)|}  \right),
\end{equation}
since $\pi_{\kappa}$ is always one-dimensional.

When $\mu_1$ is even and $\hat{L}(\lambda) \geq 1$, 
since $|S^-_1| > 0$, the second part of \eqref{d_sum3} is used to evaluate each sum in \eqref{formula:d, even}.
In this case, $\pi_k$ is nontrivial if and only if $\mu_1$ is nonzero.  
For $\phi = \phi_w$, the vector corresponding to $\pi_k \phi_w$ is the same as $w$ but with the parity of $w_1$ changed.   
Thus as in the previous case, \eqref{formula:d, even} becomes
\begin{eqnarray}   
\label{first_step1}
&\\
%\nonumber f_{e, \phi} =  q^{m(n -  \frac{\ell(\lambda)}{2})+  \frac{\ell(\lambda)^2}{4} - \frac{\mu_1}{2} -d^u +\beta_1(\lambda,w)}  
\nonumber
q^{m(n -  \frac{\ell(\lambda)}{2})+  \frac{\ell(\lambda)^2}{4} - \frac{\mu_1}{2}}  
& \displaystyle \prod_{j=1}^{\hat{L}(\lambda)-1} (q^{m-2j+1}-1)  \cdot  
 \qbin{\hat{m}+1 - \hat{L}(\lambda)}{\hat{\mu}(\lambda)}{q^2}     q^{-d^u +\beta_1(\lambda,w)} 
\begin{cases}
 q^{m +1 - \ell(\lambda) + \frac{\mu_1}{2} }  - q^{ - \frac{\mu_1}{2} }     & \text{ if } w_1 = 0 \\
 q^{m +1 - \ell(\lambda) + \frac{\mu_1}{2} }  - q^{ \frac{\mu_1}{2} }  & \text{ if } w_1 = 1.   \\
\end{cases}
\end{eqnarray}
%(the formula remains valid even if $\mu_1 = 0$.)
The following two identities are easy to verify, for $A \in \NN$ and $\nu$ a partition with $|\nu| \leq A$:
$$
\qbin{A}{\nu}{q} = \qbin{A}{\nu_{\geq 2}}{q} \qbin{A - |\nu_{\geq 2}|}{\nu_{1}}{q} 
\text{ and } 
%[A+1 - |\nu| ] \qbin{A+1}{\nu}{q}  = [A+1] \qbin{A}{\nu}{q}.
(q^{A+1 - |\nu|}-1) \qbin{A+1}{\nu}{q}  = (q^{A+1}-1) \qbin{A}{\nu}{q}.
$$
Using these identities and the identity
$\hat{L}(\lambda) +  |\hat{\mu}_{\geq 2}(\lambda)| = \hat{\ell}(\lambda) - \hat{\mu}_1(\lambda)$,
we have 
\begin{eqnarray}  \label{w1=0}
\nonumber
\qbin{\hat{m}+1 - \hat{L}(\lambda)}{\hat{\mu}(\lambda)}{q^2}
%\cdot
\left( q^{m +1 - \ell(\lambda) + \frac{\mu_1}{2} }  - q^{ - \frac{\mu_1}{2} }   \right) = 
q^{- \hat{\mu}_1 } 
%\displaystyle \prod_{j=1}^{\hat{L}(\lambda)-1} (q^{m-2j+1}-1) \cdot
%[\hat{m} + 1 - \hat{\ell}(\lambda) + \hat{\mu}_1]_{q^2} 
((q^2)^{\hat{m} + 1 - \hat{\ell}(\lambda) + \hat{\mu}_1}-1) 
\qbin{\hat{m}+1 - \hat{L}(\lambda)}{\hat{\mu}(\lambda)}{q^2}  =   \\
%\qbin{\hat{m}+1 - \hat{L}(\lambda)}{\hat{\mu}(\lambda)}{q^2}  =   \\
\nonumber 
q^{-  \hat{\mu}_1 } 
%\displaystyle \prod_{j=1}^{\hat{L}(\lambda)-1} (q^{m-2j+1}-1) 
%[\hat{m} + 1 - \hat{L}(\lambda) -  |\hat{\mu}_{\geq 2}(\lambda)| ]_{q^2}  
((q^2)^{\hat{m} + 1 - \hat{\ell}(\lambda) + \hat{\mu}_1}-1) 
\qbin{\hat{m}+1 - \hat{L}(\lambda)}{\hat{\mu}_{\geq 2} (\lambda)}{q^2}  
 \qbin{\hat{m}+1 - \hat{L}(\lambda) -  |\hat{\mu}_{\geq 2}(\lambda)|}{\hat{\mu}_{1}(\lambda)}{q^2} = \\
 \nonumber q^{-  \hat{\mu}_1 } 
%\displaystyle \prod_{j=1}^{\hat{L}(\lambda)-1} (q^{m-2j+1}-1) 
%[\hat{m} + 1 - \hat{L}(\lambda) ]_{q^2}  
((q^2)^{\hat{m} + 1 - \hat{L}(\lambda)}-1)
\qbin{\hat{m} - \hat{L}(\lambda)}{\hat{\mu}_{\geq 2} (\lambda)}{q^2}  
 \qbin{\hat{m}+1 - \hat{L}(\lambda) -  |\hat{\mu}_{\geq 2}(\lambda)|}{\hat{\mu}_{1}(\lambda)}{q^2} 
 % = \\
%  \nonumber q^{\hat{\ell}(\lambda) - \mu_1 } 
%\displaystyle \prod_{j=1}^{\hat{L}(\lambda)} (q^{m-2j+1}-1) 
%\qbin{\hat{m} - \hat{L}(\lambda)}{\hat{\mu}_{\geq 2} (\lambda)}{q^2}  
% \qbin{\hat{m}+1 - \hat{L}(\lambda) -  |\hat{\mu}_{\geq 2}(\lambda)|}{\hat{\mu}_{1}(\lambda)}{q^2}.
\end{eqnarray} 
and  %in the $w_1 = 1$ case:
\begin{eqnarray}  \label{w1=1}
\qbin{\hat{m}+1 - \hat{L}(\lambda)}{\hat{\mu}(\lambda)}{q^2}
%\cdot
\left( q^{m +1 - \ell(\lambda) + \frac{\mu_1}{2} }  - q^{ \frac{\mu_1}{2} }   \right)   = 
\nonumber q^{ \hat{\mu}_1 } 
%\displaystyle \prod_{j=1}^{\hat{L}(\lambda)-1} (q^{m-2j+1}-1) \cdot
%[\hat{m} + 1 - \hat{\ell}(\lambda) ]_{q^2} 
((q^2)^{\hat{m} + 1 - \hat{\ell}(\lambda)}-1)
\qbin{\hat{m}+1 - \hat{L}(\lambda)}{\hat{\mu}(\lambda)}{q^2}  =   \\
%\qbin{\hat{m}+1 - \hat{L}(\lambda)}{\hat{\mu}(\lambda)}{q^2}  =   \\
\nonumber q^{ \hat{\mu}_1 } 
%\displaystyle \prod_{j=1}^{\hat{L}(\lambda)-1} (q^{m-2j+1}-1) 
%[\hat{m} + 1 - \hat{L}(\lambda) -  |\hat{\mu}(\lambda)|]_{q^2}  
((q^2)^{\hat{m} + 1 - \hat{L}(\lambda) -  |\hat{\mu}(\lambda)|}-1)  
\qbin{\hat{m}+1 - \hat{L}(\lambda)}{\hat{\mu}(\lambda)}{q^2}  =   
% \nonumber q^{\hat{\ell}(\lambda)} 
%\displaystyle \prod_{j=1}^{\hat{L}(\lambda)-1} (q^{m-2j+1}-1) 
% [\hat{m} + 1 - \hat{L}(\lambda) ]_{q^2}  
%\qbin{\hat{m} - \hat{L}(\lambda)}{\hat{\mu}_{\geq 2} (\lambda)}{q^2}  
% \qbin{\hat{m}+1 - \hat{L}(\lambda) -  |\hat{\mu}_{\geq 2}(\lambda)|}{\hat{\mu}_{1}(\lambda)}{q^2}  = \\
  \nonumber q^{ \hat{\mu}_1  } 
%\displaystyle \prod_{j=1}^{\hat{L}(\lambda)} (q^{m-2j+1}-1)
((q^2)^{\hat{m} + 1 - \hat{L}(\lambda)}-1) 
\qbin{\hat{m} - \hat{L}(\lambda)}{\hat{\mu} (\lambda)}{q^2}  
% \qbin{\hat{m}+1 - \hat{L}(\lambda) -  |\hat{\mu}_{\geq 2}(\lambda)|}{\hat{\mu}_{1}(\lambda)}{q^2}.
\end{eqnarray}  
and the result follows for this case by inserting these values into \eqref{first_step1}.

Finally when $\mu_1$ is even and $\hat{L}(\lambda) = 0$, we have $S^-_1 = \emptyset$.  Thus there are two possible choices for $w$:  $w$ and $w + (1,1, \dots, 1)$ will both give the same $\phi_w$.  The formulas will not depend on the choice. 
The two summations in \eqref{formula:d, even} will both make use of the first part of  \eqref{d_sum3}.
The calculation is similar to the previous case after noting that 
$\tau_1(\lambda) - \beta_1(\lambda,w) = \sum_{j \in S^+_1, w_j = 1} \frac{\countp_j}{2}.\qedhere$ 

\end{proof}

\noindent
At $w=0$, Proposition~\ref{D_most_general} is 
Theorem~\ref{q-Kreweras-formula-theorem} (Type $D_n$),
since $\beta_1(\lambda,w) = \tau_1(\lambda)$ and 
$\tau_1(\lambda) - 2\beta_1(\lambda,w) = -\tau_1(\lambda)$.

\begin{remark}
When $\lambda$ has only even parts, then in particular $\mu_1 = 0$ and $\hat{L}(\lambda) =0$ and $\tau_1(\lambda)=0$.
We are in the last case.  Then  the expression in the Corollary simplifies to
$
2 q^{
\BCDexponent
%m(n-\hat{\ell}(\lambda))-\frac{c(\lambda)}{2}-\frac{L(\lambda)}{4} 
+ \hat{\ell}(\lambda)} \qbin{\hat{m}}{\hat{\mu}(\lambda)}{q^2}.
$
This value is actually twice the value of $f_{e,1}$ for $e$ in either nilpotent orbit associated to $\lambda$.  See Remark \ref{even_orbs}.
% , relative to the connected adjoint group, associated to $\lambda$.
\end{remark}

%%%%%%%%%%%%%%%%%%%%%%%%%%%%%%%%%%%%%%%%%%%%%%%%%%%%%%%%%%%%%%%%%
\subsection{The $f_{e, \phi}$ for the exceptional groups}  
\label{Exceptional_calcs}
%%%%%%%%%%%%%%%%%%%%%%%%%%%%%%%%%%%%%%%%%%%%%%%%%%%%%%%%%%%%%%%%%

The polynomials $f_{e, \phi}$ are listed in the third column of the following tables.  The first column is the Bala-Carter notation
for the nilpotent orbit $\0_e$ together with $\phi$, if non-trivial.  All $A(e)$ are symmetric groups and 
we denote $\phi$ by the corresponding partition for an irreducible representation of a symmetric group, where $[1^k]$ is the sign representation.
Recall that an orbit is principal in a Levi subalgebra when there are no parentheses in the Bala-Carter notation.  The letters in the notation denote the semisimple part of the Levi subalgebra.  In the second column are the representation exponents 
$m_i$.  
Exponents are listed according to the value of $\pi_i$, so that if $V$ occurs in the $\phi$-isotypic component, it is listed in the row of $(e, \phi)$.  We abbreviate $[a]_q$ by $[a]$ in the last column of the tables.

\begin{center}
\begin{tabular}{|c|c|c|} \hline
\multicolumn{3}{|c|}{$G_{2}$} \\ \hline
\multicolumn{1}{|c|}{$(e, \phi)$}
%&  \multicolumn{1}{|c|}{$e_i$}
&  \multicolumn{1}{|c|}{$m_i$}
& \multicolumn{1}{|c|}{$f_{e, \phi}$}
\\ \hline   \hline

$0$  & $1,5$ &  $\frac{[m-1] [m-5]}{ [2] [6] } $  \\\hline
$A_1$ & $1$ &  $q^{m - 5}\frac{ [m-1]}{ [2] } $  \\ \hline
$\Tilde{A}_{1}$  & $1$ & $q^{m - 3}\frac{ [m-1] }{ [2] } $    \\ \hline
$G_{2}(a_{1})$  &  $1$ &  $(q^{m-1}-1) \cdot q^{m - 3}$  \\ \hline
$G_{2}(a_{1}), [2,1]$  &  $-$ &  $0$  \\ \hline
$G_{2}(a_{1}), [1^3]$  &  $-$ &  $0$  \\ \hline
$G_{2}$   & $-$ &  $q^{2m - 2}$  \\ \hline
\end{tabular}

\vskip.2in

\begin{tabular}{|c|c|c|} \hline
\multicolumn{3}{|c|}{$F_{4}$} \\ \hline
\multicolumn{1}{|c|}{$(e, \phi)$}
&  \multicolumn{1}{|c|}{$m_i$}
& \multicolumn{1}{|c|}{$f_{e, \phi}$}
\\ \hline   \hline

$0$  & $1,5,7,11$ &$\frac{[m-1][m-5][m-7][m-11]}{ [2][6][8][12]} $  \\\hline
$A_1$ & $1,5,7$ & $q^{m - 11}\frac{ [m-1][m-5][m-7]}{ [2][4][6]} $  \\ \hline
$\Tilde{A}_{1}$ & $1,5,7$  & $q^{m - 5}\frac{ [m-1][m-5][m-7]}{ [2][4][6]} $    \\ \hline
$\Tilde{A}_{1}$, $[1^2]$ &  $-$ & $q^{m - 8}\frac{[m-1][m-5][m-7]}{[2][4][6]} $    \\ \hline
$A_{1} + \Tilde{A_{1}}$  & $1,5$  &  $q^{2m - 14}\frac{ [m-1][m-5]}{[2][2]} $   \\ \hline  
$A_2$  & $1,5$   &$q^{2m -8}\frac{ [m-1][m-5]}{ [2][6]} $   \\ \hline  
$A_2$, $[1^2]$ &  $-$   &$q^{2m -11}\frac{ [m-1][m-5]}{[2][6]} $   \\ \hline  
$\Tilde{A}_{2}$  & $1,5$  & $q^{2m -8 }\frac{ [m-1][m-5]}{[2][6]} $   \\ \hline  
$A_{2} + \Tilde{A}_{1}$  & $1$ & $q^{3m - 15} \frac{[m-1]}{[2]} $  \\ \hline
$B_2$   & $1,3$  & $q^{2m -6 }\frac{ [m-1][m-3]}{[2][4]} $   \\ \hline
$B_2$, $[1^2]$  &  $-$  & $q^{2m -8 }\frac{ [m-1][m-3]}{[2][4]} $   \\ \hline
$\Tilde{A}_{2} + A_{1}$ & $1$  &   $q^{3m - 13} \frac{[m-1]}{[2]} $  \\ \hline
$C_{3}(a_{1})$  & $1,3$ & $(q^{m-1} - 1) \cdot q^{2m - 8}\frac{[m-3]}{[2]} $  \\ \hline
$C_{3}(a_{1})$, $[1^2]$ &   $-$  & $0$  \\ \hline
%$F_{4}(a_{3})$ &     q^{m(4-3)+7 - 12} (q^{m-1} - 1) ( 1*0    - q^{m-3}*0   + q^{2(m-3)})   ( q^{2(m-3)}) \\ \hline
$F_{4}(a_{3})$ & $1$   &     $(q^{m-1} - 1) \cdot q^{3m- 11}$ \\ \hline
$F_{4}(a_{3}), [3,1]$ &  $-$  &     0 \\ \hline
$F_{4}(a_{3}), [2,2]$ &  $3$  &      $(q^{m-1} - 1) \cdot(-q^{2m- 8})$ \\ \hline
$F_{4}(a_{3}), [2,1^2]$ &  $-$ &     0 \\ \hline
%$F_{4}(a_{3}), [1^4]$ &  &     \text{not in Springer correspondence}  \\ \hline
$B_3$ & $1$ &  $q^{3m -7} \frac{[m-1]}{[2]}$ \\ \hline
$C_3$ & $1$ &  $q^{3m-7} \frac{[m-1]}{[2]}$\\ \hline
$F_{4}(a_{2})$ &  $1$ & $(q^{m-1} -1) \cdot q^{3m - 7}$ \\ \hline
$F_{4}(a_{2})$, $[1^2]$ &  $-$ & $0$ \\ \hline
$F_{4}(a_{1})$ & $1$ & $(q^{m-1} -1) \cdot q^{3m - 5}$ \\ \hline
$F_{4}(a_{1})$, $[1^2]$   &  $-$  & $0$ \\ \hline
$F_{4}$ &  $-$ & $q^{4m-4}$ \\ \hline

\end{tabular}
\end{center}

%\begin{table}[htdp]
%\caption{}
\begin{center}
\begin{tabular}{|c|c|c|} \hline
\multicolumn{3}{|c|}{$E_{6}$} \\ \hline
\multicolumn{1}{|c|}{$(e, \phi)$}
&  \multicolumn{1}{|c|}{$m_i$}
& \multicolumn{1}{|c|}{$f_{e, \phi}$}
\\ \hline \hline
$0$  & $1,4,5,7,8,11$ &  $\frac{ [m-1] [m-4][m-5][m-7][m-8][m-11]}
{ [2][5][6][8] [9][12] } $  \\\hline
$A_1$ & $1,4,5,7,8$ &  $q^{m-11}\frac{ [m-1] [m-4][m-5][m-7][m-8]}
{ [2][3][4][5] [6]} $  \\ \hline
$2A_{1}$  & $1,4,5,7$ &  $q^{2m - 16}\frac{ [m-1] [m-4][m-5][m-7]}
{ [1][2][4][6]} $   \\ \hline  
$3A_{1}$  & $1,4,5$ &  $q^{3m - 21}\frac{ [m-1] [m-4][m-5]}
{ [2][2][3]} $   \\ \hline  
%$A_2$  & $1,4,5$ &$q^{2m -9}\frac{ [m-1] [m-4][m-5]([m-2]+q[m-8])}
$A_2$  & $1,4,5$ &$q^{2m -9}\frac{ [m-1] [m-4][m-5]([m-7]+q[m-3])}
                                      { [2][3][4][6]}$ \\ \hline 
%\cdot (q^{m-2} \!+\! q^{m-7}\! -\!q\!-\!1) $   \\ \hline  
$A_2$, $[1^2]$   & $5$ &$q^{2m -11}\frac{ [m-1] [m-4][m-5]([m-3]+q^5[m-7])}{ [2][3][4][6] }$ \\ \hline 
%\cdot (q^{m-2} \!+ \!q^{m-3} \!-\! q^{5}\!-\! 1) $   \\ \hline  
$A_{2} + A_{1}$ & $1,4,5$& $q^{3m -16}\frac{ [m-1] [m-4][m-5]}{ [1][2][3]}  $  \\ \hline
$2A_{2}$& $1,5$ & $q^{4m -16}\frac{ [m-1][m-5]}{ [2][6]}  $  \\ \hline
$A_{2} + 2A_{1}$ & $1,4$ & $q^{4m-20}\frac{ [m-1] [m-4]}{[1][2]}  $  \\ \hline
$A_{3}$ & $1,3,4$& $q^{3m-11}\frac{ [m-1] [m-3][m-4]}{[1][2][4]}  $  \\ \hline
$2A_{2} + A_{1}$ & $1$ & $q^{5m-21}\frac{[m-1]}{[2]}  $  \\ \hline

$A_{3}+A_1$ & $1,3$& $q^{4m-15}\frac{ [m-1] [m-3]}{[1][2]}  $  \\ \hline
$D_{4}(a_1)$ & $1$ &  $(q^{m-1}-1) \cdot q^{3m-11 }\frac{[m-1][m-2]}{[2][3]}  $  \\ \hline
 $D_{4}(a_1), [2,1]$ &  $3$ &   $(q^{m-1}-1) \cdot q^{3m-10 }\frac{[m-2][m-4]}{[1][3]}  $   \\ \hline
 $D_{4}(a_1), [1^3]$ & $-$  &    $(q^{m-1}-1) \cdot q^{3m-8}\frac{[m-4][m-5]}{[2][3]}  $   \\ \hline 

$A_4$  & $1,3$&  $q^{4m-11} \frac{[m-1][m-3]}{[1][2]}$\\ \hline
$D_4$ & $1,2$ &  $q^{4m-10} \frac{[m-1][m-2]}{[2][3]}$\\ \hline
$A_4+A_1$  & $1$ &  $q^{5m-14} \frac{[m-1]}{[1]}$\\ \hline

$A_5$ & $1$ &  $q^{5m-11} \frac{[m-1]}{[2]}$\\ \hline
$D_5(a_1)$ & $1,2$ &  $(q^{m-1} - 1) \cdot q^{4m-10} \frac{[m-2]}{[1]}$\\ \hline
$E_{6}(a_{3})$  & $1$& $(q^{m-1} -1) \cdot q^{5m - 11}$ \\ \hline
$E_{6}(a_{3})$, $[1^2]$ &  $2$  & $(q^{m-1} -1) \cdot (-q^{4m - 9})$ \\ \hline
$D_{5}$  & $1$& $q^{5m - 8} \frac{[m-1]}{[1]}$ \\ \hline
$E_{6}(a_{1})$ & $1$ & $(q^{m-1} -1) \cdot q^{5m - 7}$ \\ \hline
$E_{6}$ & $-$ & $q^{6m-6}$ \\ \hline

\hline

\end{tabular}
\end{center}
%\label{}
%\end{table}

%\begin{table}[htdp]
%\caption{}
\begin{center}
\begin{tabular}{|c|c|c|} \hline
\multicolumn{3}{|c|}{$E_{7}$} \\ \hline
\multicolumn{1}{|c|}{$(e, \phi)$}
&  \multicolumn{1}{|c|}{$m_i$}
& \multicolumn{1}{|c|}{$f_{e, \phi}$}
\\ \hline \hline

$0$  & $1,5,7,9,11,13,17$ &  
%$\frac{ [m-1] [m-5][m-7][m-9][m-11][m-13][m-17]}
%{ (q^{2} - 1)[6][8][10] [12][14][18] } $  \\\hline
$\prod_{i=1}^7 \frac{[m-m_i]}{[m_i+1]}$  \\\hline
%\frac{q^{m-m_i} - 1}{q^{m_i+1} - 1}$  \\\hline
%{ (q^{2} - 1)[6][8][10] [12][14][18] } $  \\\hline
%{ (q^{2} - 1)[6][8][10] [12][14][18] } $ 

$A_1$ & $1,5,7,9,11,13$  &  $q^{m-17}\frac{ [m-1] [m-5][m-7][m-9][m-11][m-13]}
{ [2][4][6][6] [8][10]} $  \\ \hline
$2A_{1}$   & $1,5,7,9,11$ &  $q^{2m-26  }\frac{ [m-1] [m-5][m-7][m-9][m-11]}
{ [2][2][4][6] [8]} $  \\ \hline

$(3A_{1})''$  & $1,5,7,11$ & $q^{3m-27  }\frac{ [m-1] [m-5][m-7][m-11]}
{ [2][6][8][12]} $  \\ \hline
$(3A_{1})'$   & $1,5,7,9$ & $q^{3m-33  }\frac{ [m-1] [m-5][m-7][m-9]}
{ [2][2][4][6]} $  \\ \hline
%$A_2$   &$q^{2m }\frac{ [m-1] [m-5][m-7][m-9]}{ (q^2-1)[4][6][6][10]} 
%\cdot (q^{m-8}*[(q^3+1)(q^5+1) + (q^3-1)(q^5-1)]  -1*[(q^3+1)(q^5+1) - (q^3-1)(q^5-1)]) $   \\ \hline  
%
%$A_2$   &$q^{2m }\frac{ [m-1] [m-5][m-7][m-9]}{ (q^2-1)[4][6][6][10]} 
% \cdot (q^{m-8}*[q^8+1]  -1*[q^3+q^5]) $   \\ \hline  
%
$A_2$   & $1,5,7,9$ & $q^{2m - 14}\frac{ [m-1] [m-5][m-7][m-9]([m-3]+q^2[m-13])}{ [2][4][6][6][10] }$ \\ \hline
% \cdot (q^{m-3} \!+ \!q^{m-11} \! -\!q^2 \!- \!1) $   \\ \hline  
%$A_2$, $[1^2]$   &$q^{2m - 17}\frac{ [m-1] [m-5][m-7][m-9]}{ (q^2-1)[4][6][6][10]} 
%\cdot (q^{m-8}*[(q^3+1)(q^5+1) - (q^3-1)(q^5-1)]  -1*[(q^3+1)(q^5+1) + (q^3-1)(q^5-1)]) $   \\ \hline  
%$A_2$, $[1^2]$   &$q^{2m - 17}\frac{ [m-1] [m-5][m-7][m-9]}{ (q^2-1)[4][6][6][10]} 
%\cdot (q^{m-8}*[q^3+q^5]  -1*[q^8+1]) $   \\ \hline  
$A_2$, $[1^2]$   & $8$ &$q^{2m - 17}\frac{ [m-1] [m-5][m-7][m-9] ([m-3]+q^8[m-13]) }{ [2][4][6][6][10] }$ \\ \hline 
% \cdot (q^{m-3}\!+ \!q^{m-5} \! -\! q^8 \!-\! 1) $   \\ \hline  
$4A_{1}$   & $1,5,7$ & $q^{4m-  38}\frac{ [m-1] [m-5][m-7]}{ [2][4][6]} $  \\ \hline
%$A_{2} + A_{1}$ & 
%$q^{3m + 21 -  47}\frac{ [m-1] [m-5][m-7]}{ (q^2-1)(q^2-1)[4][6]} 
%\cdot (q^{m-8}*[(q^3+1)(q+1) + (q^3-1)(q-1)]  -1*[(q^3+1)(q+1) - (q^3-1)(q-1)]) $   \\ \hline  
$A_{2} + A_{1}$ & $1,5,7$ & 
$q^{3m - 25 }\frac{ [m-1] [m-5][m-7](q^2[m-7]+ [m-9])}{ [2][2][4][6] }$ \\ \hline
% \cdot (q^{m-5} + q^{m-9} - q^2 - 1) $   \\ \hline  
$A_{2} + A_{1}$, $[1^2]$ & $8$  & 
$q^{3m - 26 }\frac{ [m-1] [m-5][m-7]([m-7]+ q^4[m-9])}{ [2][2][4][6] }$ \\ \hline 
% \cdot (q^{m-5} + q^{m-7} - q^4 - 1) $   \\ \hline  
$A_{2} + 2A_{1}$ & $1,5,7$ & $q^{4m-32} \frac{ [m-1] [m-5][m-7]}{ [2]^3}  $  \\ \hline%%#49
$2A_{2}$ & $1,5,7$ & $q^{4m-26} \frac{ [m-1] [m-5][m-7]}{ [2]^2[6]}  $  \\ \hline%%#48
$A_{2} + 3A_{1}$ & $1,5$  & $q^{5m-35}\frac{ [m-1] [m-5]}{ [2][6]}  $  \\ \hline%%#47
$A_{3}$ & $1,5,5,7$ &  $q^{3m-17}\frac{ [m-1] [m-5][m-5][m-7]}{ [2]^2[4][6]}  $  \\ \hline%%#46
$(A_{3}+A_1)''$ & $1,5,7$ & $q^{4m-22}\frac{ [m-1] [m-5][m-7]}{ [2][4][6]}  $  \\ \hline%%#45
$2A_{2} + A_{1}$ & $1,5$ & $q^{5m-33}\frac{ [m-1] [m-5]}{ [2]^2}  $  \\ \hline%%#44
$(A_{3}+A_1)'$ & $1,5,5$ & $q^{4m-24}\frac{ [m-1] [m-5][m-5]}{ [2]^3}  $  \\ \hline%%#43
$D_{4}(a_1)$ &  $1,5$ &
%$q^{3m+16- 33}(q^{m-1}-1)[m-5]\frac{ (q^{2(m-5)}
%$ replace q by q^2 in the counting of points
%(1/6(q+1)(q^2+q+1) + 1/3((q^2-1)(q-1) + 1/2(q^3-1) = q^3)
 %-q^{m-3}(1/6*2*(q+1)(q^2+q+1) + -1/3(q^2-1)(q-1) +0(q^3-1) = q^2+q)     +1*
  %(1/6*1*(q+1)(q^2+q+1) + 1/3(q^2-1)(q-1) -1/2(q^3-1) = 1)       )}{(q^3-1)(q^2-1)}  $  \\ \hline

%$q^{3m-17 }(q^{m-1}-1)[m-5]\frac{ (q^{2(m-5)}(q^6)
 %-q^{m-5}(q^4+q^2)     +1*(1)       )}{(q^3-1)(q^2-1)}  $  \\ \hline
$(q^{m-1}-1) \cdot q^{3m-17 }\frac{ [m-1][m-3][m-5]}{[2][4][6]}  $  \\ \hline
 $D_{4}(a_1)$, $[2,1]$ & $5^{}$ &  $(q^{m-1}-1) \cdot q^{3m-15 }\frac{[m-3][m-5][m-7]}{[2]^2[6]}  $  \\ \hline
 $D_{4}(a_1)$, $[1^3]$  & $-$ &  $(q^{m-1}-1) \cdot q^{3m-11 }\frac{[m-5][m-7][m-9]}{[2][4][6]}  $  \\ \hline
$A_{3}+2A_1$ & $1,5$ & $q^{5m-29}\frac{ [m-1] [m-5]}{ [2]^2}  $  \\ \hline  %%#39
% $D_{4}(a_1)+A_1$   &  $q^{3m+11-33} \frac{[m-1][m-5]}{[2][4]}(q^{m-5}(q^2) -(1)) \\ \hline %% #37
$D_{4}(a_1)+A_1$  & $1,5$ &  $(q^{m-1}-1) \cdot q^{4m-22} \frac{[m-3][m-5]}{[2][4]}$ \\ \hline %% #37
$D_{4}(a_1)+A_1$, $[1^2]$  & $5$  &  $(q^{m-1}-1) \cdot q^{4m-20} \frac{[m-5][m-7]}{[2][4]}$ \\ \hline %% #37
$D_4$  & $1,3,5$  &  $q^{4m-16} \frac{[m-1][m-3][m-5]}{[2][4][6]}$\\ \hline  %%#36
%$A_3+A_2$  &  $q^{5m + 6 - 32} \frac{[m-1][m-5]}{[2]^2)}
%(q)$\\ \hline   %% #34,35
$A_3+A_2$  & $1,5$  &  $q^{5m -25} \frac{[m-1][m-5]}{[2]^2}$\\ \hline   %% #34,35
$A_3+A_2$, $[1^2]$  & $-$ &  $q^{5m -26} \frac{[m-1][m-5]}{[2]^2}$\\ \hline   %% #34,35
%$A_4$  &  $q^{4m + 10 - 27} \frac{[m-1][m-5]}{[2]^2 [6]}
%(q^{m-4}( (q+1)(q^3+1) + (q-1)(q^3-1)) - 1((q+1)(q^3+1) - (q-1)(q^3-1))$\\ \hline   %% #32,33
%$A_4$  &  $q^{4m + 10 - 27} \frac{[m-1][m-5]}{[2]^2 [6]}
%(q^{m-4}(q^4 +1)- q-q^3)$\\ \hline   %% #32,33
$A_4$  &  $1,5$ &  $q^{4m -16} \frac{[m-1][m-5](q^2 [m-3]+[m-5])}{[2]^2 [6]}$ \\ \hline
% (q^{m-1}+q^{m-5}- q^2-1)$\\ \hline   %% #32,33
%$A_4$, $[1^2]$  &  $q^{4m -17} \frac{[m-1][m-5]}{[2]^2 [6]}
%(q^{m-4}(q^3 +q)- 1-q^4)$\\ \hline   %% #32,33
$A_4$, $[1^2]$  & $4$ &  $q^{4m -17} \frac{[m-1][m-5]([m-3]+q^4[m-5])}{[2]^2 [6]}$ \\ \hline
%(q^{m-1}+q^{m-3}-q^4-1)$\\ \hline   %% #32,33
$A_3+A_2+A_1$ &  $1$ &  $q^{6m-30}  \frac{[m-1]}{[2]} $\\ \hline  %% #31
$A_5''$  & $1,5$ &  $q^{5m-17}  \frac{[m-1][m-5]}{[2][6]} $\\ \hline  %% #30
$D_4+A_1$  & $1,3$ &  $q^{5m-21}  \frac{[m-1][m-3]}{[2][4]} $\\ \hline

\hline
\end{tabular}
\end{center}
%\label{}
%\end{table}

%\begin{table}
\begin{center}
\begin{tabular}{|c|c|c|} \hline
\multicolumn{3}{|c|}{$E_{7}$, part 2} \\ \hline
\multicolumn{1}{|c|}{$(e, \phi)$}
&  \multicolumn{1}{|c|}{$m_i$}
& \multicolumn{1}{|c|}{$f_{e, \phi}$}
\\ \hline \hline
$A_4+A_1$  & $1$ &  $q^{5m-21} \frac{[m-1]}{[2]} \cdot \frac{[m-3] + [m-5]}{[2]}$\\ \hline
$A_4+A_1$, $[1^2]$  & $4$ &  $q^{5m-22} \frac{[m-1]}{[2]} \cdot \frac{[m-3]+q^2[m-5]}{[2]}$\\ \hline
$D_5(a_1)$  & $1,3$ &  $(q^{m-1}-1) \cdot q^{4m-16} \frac{[m-3]^2}{[2]^2}$\\ \hline
$D_5(a_1)$, $[1^2]$  & $4$ &  $(q^{m-1}-1) \cdot q^{4m-15} \frac{[m-3][m-5]}{[2]^2} $\\ \hline
$A_4+A_2$  & $1$ &   $q^{6m-24} \frac{[m-1]}{[2]}$\\ \hline
$A_5'$  & $1,3$  &   $q^{5m-17} \frac{[m-1][m-3]}{[2]^2}$\\ \hline
$D_5(a_1)+A_1$  & $1,3$ &  $(q^{m-1}-1) \cdot q^{5m-19} \frac{[m-3]}{[2]}$\\ \hline
$A_5+A_1$  & $1$ &  $q^{6m-22} \frac{[m-1]}{[2]}$\\ \hline
$D_6(a_2)$  & $1,3$ &  $(q^{m-1}-1) \cdot q^{5m-17}  \frac{[m-3]}{[2]}$\\ \hline
%$E_{6}(a_{3})$  & $q^{4m +7 - 21}  \frac{[m-1][m-3]}{[2]}( q^{m-3} )$ \\ \hline
$E_{6}(a_{3})$  & $1,3$ & $(q^{m-1}-1) \cdot q^{5m - 17}  \frac{[m-3]}{[2]}$ \\ \hline
%$E_{6}(a_{3})$, $[1^2]$  & $q^{4m +7 - 20}  \frac{[m-1][m-3]}{[2]}( q^{m-3} )$ \\ \hline
$E_{6}(a_{3})$, $[1^2]$  & $3$ & $(q^{m-1}-1) (-q^{4m -14})  \frac{[m-3]}{[2]}$ \\ \hline
%$E_{7}(a_{5})$  & 
%$q^{4m+7-21 }(q^{m-1}-1)\frac{ (q^{2(m-3)}(1) -q^{m-3}(0) +1*0)}{1}  $  \\ \hline
$E_{7}(a_{5})$  & $1$ & 
$(q^{m-1}-1) \cdot q^{6m-20 }$  \\ \hline

%$E_{7}(a_{5})$, $[2,1]$  & 
%$q^{4m+7-21 }(q^{m-1}-1)\frac{ (q^{2(m-3)}(0) -q^{m-3}(1) +1*0)}{1}  $  \\ \hline
$E_{7}(a_{5})$, $[2,1]$  & $3$ & 
$(q^{m-1}-1) \cdot (-q^{5m-17 })$  \\ \hline
%$E_{7}(a_{5})$, sgn & 
%$q^{4m+7-21 }(q^{m-1}-1)\frac{ (q^{2(m-3)}(0) -q^{m-3}(0) +1*1)}{1}  $  \\ \hline
 $E_{7}(a_{5})$, $[1^3]$ & $-$ & 
 $(q^{m-1}-1) \cdot q^{4m-14 }$  \\ \hline
  
$D_{5}$  & $1,3$ & $q^{5m - 13} \frac{[m-1][m-3]}{[2][2]}$ \\ \hline
$A_{6}$  & $1$ & $q^{6m - 16} \frac{[m-1]}{[2]}$ \\ \hline
$D_{5}+A_1$  & $1$ & $q^{6m-16} \frac{[m-1]}{[2]}$ \\ \hline
$D_6(a_1)$  & $1,3$ &  $(q^{m-1}-1) \cdot q^{5m -13} \frac{[m-3]}{[2]}$\\ \hline
$E_{7}(a_{4})$  & $1$ & $(q^{m-1} -1) \cdot q^{6m -16}$ \\ \hline
$E_{7}(a_{4})$, $[1^2]$  & $-$ & 0 \\ \hline
%$E_{6}(a_{1})$  & $q^{5m +3 -14} \frac{[m-1]}{[2]}[  q^{m-2} ((q+1) + (q-1) )    -((q+1) - (q-1))] $ \\ \hline
$E_{6}(a_{1})$  & $1$ & $(q^{m-1} -1) \cdot q^{5m  - 11} \frac{[m-1]}{[2]} $ \\ \hline
$E_{6}(a_{1})$, $[1^2]$  & $2$ & $(q^{m-1} -1) \cdot q^{5m - 10} \frac{[m-3]}{[2]}$ \\ \hline
$D_{6}$  & $1$ & $q^{6m  - 12} \frac{[m-1]}{[2]}$\\ \hline

%$E_{7}(a_{3})$  & $q^{5m - 10} (q^{m-1} -1) [  q^{m-2} (1 )    -(0)] $ \\ \hline
$E_{7}(a_{3})$  & $1$ & $(q^{m-1} -1) \cdot q^{6m - 12}$ \\ \hline
%$E_{7}(a_{3})$, $[1^2]$  & $q^{5m - 10} (q^{m-1} -1) [  q^{m-2} (0 )    -(1)] $ \\ \hline
$E_{7}(a_{3})$, $[1^2]$  & $2$ & $(q^{m-1} -1) \cdot (-q^{5m - 10})$ \\ \hline

$E_{6}$  & $1$ & $q^{6m  - 10} \frac{[m-1]}{[2]}$\\ \hline
$E_{7}(a_{2})$  & $1$ & $(q^{m-1} -1) \cdot q^{6m - 10}$ \\ \hline
$E_{7}(a_{1})$ & $1$  & $(q^{m-1} -1) \cdot q^{6m - 8}$ \\ \hline
$E_{7}$ & $-$ & $q^{7m-7}$ \\ \hline
\hline

\end{tabular}
\end{center}
%\label{}
%\end{table}

%\begin{table}[htdp]
%\caption{}
\begin{center}
\begin{tabular}{|c|c|c|} \hline
\multicolumn{3}{|c|}{$E_{8}$} \\ \hline
\multicolumn{1}{|c|}{$(e, \phi)$}
&  \multicolumn{1}{|c|}{$m_i$}
& \multicolumn{1}{|c|}{$f_{e, \phi}$}
\\ \hline \hline

$0$  &  $1,7,11,13,17,19, 23,29$ &  
%$\frac{ [m-1] [m-7][m-11][m-13][m-17][m-19][m-23](q^{m-29} - 1)}
%$\frac{ \prod  q^{m-m_i} - 1}{ \prod  q^{m_i + 1} - 1}$ \\ \hline
$\prod_{i=1}^8 \frac{ [m-m_i]}{[m_i+ 1]}$ \\ \hline
%[2][8][12][14] [18](q^{20} - 1)(q^{24} - 1)(q^{30} - 1) } $  \\\hline
$A_1$ & $1,7,11,13,17, 19,23$  &  $q^{m-29}
%\frac{ \prod  q^{m-m_i} - 1}
\frac{ [m-1] [m-7][m-11][m-13][m-17][m-19][m-23]}
{ [2][6][8][10] [12][14][18]}$
  \\ \hline
$2A_{1}$   & $1,7,11,13, 17,19$  &  $q^{2m-46  }
\frac{ [m-1] [m-7][m-11][m-13][m-17][m-19]}
{ [2][4][6][8] [10][12] } $  \\ \hline
$3A_{1}$   & $1,7,11,13,17$  & $q^{3m-57  }
\frac{ [m-1] [m-7][m-11][m-13][m-17]}
{ [2]^2[6][8] [12]} $  \\ \hline
%$A_2$   &$q^{2m }\frac{ [m-1] [m-5][m-7][m-9]}{ [2][4][6][6][10]} 
%\cdot (q^{m-14}*[(q^5+1)(q^9+1) + (q^5-1)(q^9-1)]  -1*[(q^5+1)(q^9+1) - (q^5-1)(q^9-1)]) $   \\ \hline  
%
%$A_2$   &$q^{2m }\frac{ [m-1] [m-5][m-7][m-9]}{ [2][4][6][6][10]} 
% \cdot (q^{m-8}*[q^8+1]  -1*[q^3+q^5]) $   \\ \hline  
%
$A_2$   & $1,7,11,13,17$  & $q^{2m - 24 }\frac{ [m-1][m-7][m-11][m-13][m-17]
([m-5]+q^4 [m-23])}
%(q^{m-5} + q^{m-19} - q^4 - 1)}
{ [2][6][8][10][12][18]}$   \\ \hline  
$A_2$, $[1^2]$    & $14$  & $q^{2m - 29 }\frac{ [m-1][m-7][m-11][m-13][m-17]
([m-5] + q^{14} [m-23])}
%(q^{m-5} + q^{m-9} - q^{14} - 1)}
{ [2][6][8][10][12][18]}$   \\ \hline
$4A_{1}$   & $1,7,11,13$ & $q^{4m-  68} \frac{ [m-1] [m-7][m-11][m-13]}
{ [2][4][6] [8]} $  \\ \hline
%$A_{2} + A_{1}$ & 
%$q^{3m + 21 -  47}\frac{ [m-1] [m-5][m-7]}{ [2][2][4][6]} 
%\cdot (q^{m-8}*[(q^3+1)(q+1) + (q^3-1)(q-1)]  -1*[(q^3+1)(q+1) - (q^3-1)(q-1)]) $   \\ \hline  
$A_{2} + A_{1}$  & $1,7,11,13$ & 
$q^{3m  - 43}
\frac{ [m-1][m-7][m-11][m-13] (q^2 [m-11] +[m-17])}
%(q^{m-9} + q^{m-17} - q^2 - 1)}
{ [2][4][6]^2[10]}$   \\ \hline  
$A_{2} + A_{1}$, $[1^2]$ & $14$ & 
$q^{3m  - 46}
\frac{ [m-1][m-7][m-11][m-13]([m-11] + q^8 [m-17])} 
% (q^{m-9} + q^{m-11} - q^8 - 1)}
{ [2][4][6]^2[10]}$
\\ \hline  
$A_{2} + 2A_{1}$ & $1,7,11,13$ & $q^{4m-56} \frac{ [m-1] [m-7][m-11][m-13]}
{ [2]^2[4][6] } $  \\ \hline
$A_{3}$ & $1,7,9,11,13$  &  $q^{3m-29}\frac{ [m-1] [m-7][m-9][m-11][m-13]}
{ [2][4][6][8][10]}  $  \\ \hline
$A_{2} + 3A_{1}$ & $1,7,11$ & $q^{5m-65}\frac{ [m-1] [m-7][m-11]}{ [2]^2[6]}  $  \\ \hline

$2A_{2}$ & $1,7,11$ & $q^{4m-44} \frac{ [m-1] [m-7][m-11]([m-5]+ q^4 [m-17])}
%(q^{m-5}+q^{m-13} - q^4-1)}
{ [2][4][6][12]}  $  \\ \hline
$2A_{2}$, $[1^2]$ & $11$ & $q^{4m- 46} \frac{ [m-1] [m-7][m-11]([m-5]+ q^8 [m-17])}
%(q^{m-5}+q^{m-9} - q^8-1)}
{ [2][4][6][12]}  $  \\ \hline

$2A_{2} + A_{1}$ & $1,7,11$ & $q^{5m-57}\frac{ [m-1] [m-7][m-11]}{ [2]^2[6]}  $  \\ \hline
$A_{3}+A_1$ & $1,7,9,11$  & $q^{4m-42}\frac{ [m-1] [m-7][m-9][m-11]}
{ [2]^2[4][6]}  $  \\ \hline

$D_{4}(a_1)$ & $1,7,11$  &  %%#40
%$q^{3m+16- 33}(q^{m-1}-1)(q^{m-5}-1)]\frac{ (q^{2(m-5)}
%$ replace q by q^2 in the counting of points
%(1/6(q+1)(q^2+q+1) + 1/3(q^2-1)(q-1) + 1/2(q^3-1) = q^3)
 %-q^{m-3}(1/6*2*(q+1)(q^2+q+1) + -1/3(q^2-1)(q-1) +0(q^3-1) = q^2+q)     +1*
  %(1/6*1*(q+1)(q^2+q+1) + 1/3(q^2-1)(q-1) -1/2(q^3-1) = 1)       )}{(q^3-1)(q^2-1)}  $  \\ \hline

%$q^{3m-17 }(q^{m-1}-1)(q^{m-5}-1)\frac{ (q^{2(m-5)}(q^6)
 %-q^{m-5}(q^4+q^2)     +1*(1)       )}{(q^3-1)(q^2-1)}  $  \\ \hline
$(q^{m-1}-1) \cdot q^{3m-29 }\frac{[m-1][m-5][m-7][m-11]}{[2][6][8][12]}  $  \\ \hline
 $D_{4}(a_1)$, $[2,1]$ & $9^{}$ &  $(q^{m-1}-1) \cdot q^{3m-25 }\frac{[m-5][m-7][m-11][m-13]}{[2][4][6][12]}  $  \\ \hline
 $D_{4}(a_1)$, $[1^3]$ & $-$ &  $(q^{m-1}-1) \cdot q^{3m-17}\frac{[m-7][m-11][m-13][m-17]}{[2][6][8][12]}  $  \\ \hline

 $D_4$  & $1,5,7,11$ &  $q^{4m-28} \frac{[m-1][m-5][m-7][m-11]}{[2][6][8][12]}$\\ \hline  
 $2A_{2} + 2A_{1}$ & $1,7$ & $q^{6m-66}\frac{ [m-1][m-7]}{[2] [4]} $  \\ \hline  %80

$A_{3}+2A_1$ & $1,7,9$ & $q^{5m-51}\frac{ [m-1][m-7][m-9]}{[2]^2 [4]} $  \\ \hline  %76

$D_{4}(a_1)+A_1$   & $1,7$ &  $(q^{m-1}-1) \cdot q^{4m-40} \frac{[m-5][m-7]^2}{[2][4][6]}$ \\ \hline 
$D_{4}(a_1)+A_1, [2,1]$  & $9^{}$ &   $(q^{m-1}-1) \cdot q^{4m-38} \frac{[m-5][m-7][m-11]}{[2]^2[6]}$\\ \hline
$D_{4}(a_1)+A_1, [1^3]$   & $-$ &   $(q^{m-1}-1) \cdot q^{4m-34} \frac{[m-7][m-11][m-13]}{[2][4][6]}$ \\ \hline

$A_3+A_2$  & $1,7,9$ &  $q^{5m -45} \frac{[m-1][m-7][m-9]}{[2]^2 [4]}$\\ \hline 
$A_3+A_2$, $[1^2]$  & $-$ &  $q^{5m -46} \frac{[m-1][m-7][m-9]}{[2]^2 [4]}$ \\ \hline  
$A_4$  & $1,7,9$ &  $q^{4m -26} \frac{[m-1][m-7][m-9](q^2[m-5] + [m-11])}
%(q^{m-3} + q^{m-11}-q^2-1)}
{[2][4][6][10]}$ \\ \hline 
%$A_4$, $[1^2]$  &  $q^{4m -17} \frac{[m-1][m-5]}{[2]^2 [6]}
%(q^{m-4}(q^3 +q)- 1-q^4)$\\ \hline   %% #32,33
$A_4$, $[1^2]$  & $8$ &  $q^{4m -29} \frac{[m-1][m-7][m-9]([m-5] + q^8[m-11])}
%(q^{m-3} + q^{m-5}-q^8-1)}
{[2][4][6][10]}$ \\ \hline   
$A_3+A_2+A_1$  & $1,7$ &  $q^{6m-54}  \frac{[m-1][m-7]}{[2]^2} $\\ \hline  
$D_4+A_1$  & $1,5,7$ &  $q^{5m-39}  \frac{[m-1][m-5][m-7]}{[2][4][6]} $ \\ \hline
$D_{4}(a_1)+A_2$  & $1,7$ &  $(q^{m-1} -1) \cdot q^{5m- 43} \frac{[m-5][m-7]}{[2][6]} $  \\ \hline
$D_{4}(a_1)+A_2$, $[1^2]$  & $8$ &   $(q^{m-1} -1) \cdot q^{5m-40}\frac{[m-7][m-11]}{[2][6]} $  \\ \hline
\hline
\end{tabular}
\end{center}
%\label{}
%\end{table}

%\begin{table}
\begin{center}
\begin{tabular}{|c|c|c|} \hline
\multicolumn{3}{|c|}{$E_{8}$, part 2} \\ \hline
\multicolumn{1}{|c|}{$(e, \phi)$}
&  \multicolumn{1}{|c|}{$m_i$}
& \multicolumn{1}{|c|}{$f_{e, \phi}$}
\\ \hline \hline
$A_4+A_1$  & $1,7$ &  $q^{5m-37} \frac{[m-1][m-7] ([m-5] + q^2 [m-11])}
%(q^{m-5} + q^{m-9}-q^2-1)}
{[2]^2[6]}$ \\ \hline
$A_4+A_1$, $[1^2]$  & $8$ &  $q^{5m-38} \frac{[m-1][m-7]([m-5] + q^4 [m-11])}
%(q^{m-5}+q^{m-7}-q^4-1)}
{[2]^2 [6]}$ \\ \hline
$2A_3$  & $1,7$ &  $q^{6m-46} \frac{[m-1][m-7]}{[2][4]}$\\ \hline

$D_5(a_1)$  & $1,5,7$ &  $(q^{m-1} -1) \cdot q^{4m-28} \frac{[m-5]^2[m-7]}{[2][4][6]} $\\ \hline  %%C=58
$D_5(a_1)$, $[1^2]$  & $8$ &  $(q^{m-1} -1) \cdot q^{4m-25} \frac{[m-5][m-7][m-11]}{[2][4][6]}$\\ \hline

$A_4+2A_1$  & $1,7$ &   $q^{6m-44} \frac{[m-1][m-7]}{[2]^2}$ \\ \hline
$A_4+2A_1$, $[1^2]$  & $-$ &  $q^{6m-45} \frac{[m-1][m-7]}{[2]^2}$ \\ \hline

$A_4+A_2$  & $1,7$ &   $q^{6m-42} \frac{[m-1][m-7]}{[2]^2}$\\ \hline  %%54

$A_5$  & $1,5,7$ &  $q^{5m-29} \frac{[m-1][m-5][m-7]}{[2]^2[6]}$\\ \hline
$D_5(a_1)+A_1$  & $1,5,7$ & $(q^{m-1} -1) \cdot q^{5m-35} \frac{[m-5][m-7]}{[2]^2}$ \\ \hline
$A_4 + A_2+A_1$  & $1$ &  $q^{7m-49} \frac{[m-1]}{[2]}$\\ \hline

$D_4 + A_2$  & $1,5$ &  $q^{6m-36} \frac{[m-1][m-5]}{[2][6]}$ \\ \hline
$D_4 + A_2$, $[1^2]$ & $-$ &  $q^{6m-39} \frac{[m-1][m-5]}{[2][6]}$ \\ \hline
$E_{6}(a_{3})$  & $1,5,7$ & $(q^{m-1} -1) \cdot q^{5m - 29}  \frac{[m-5][m-7]}{[2][6]}$ \\ \hline
$E_{6}(a_{3})$, $[1^2]$  & $5$ & $(q^{m-1} -1)\cdot (-q^{4m -24})  \frac{[m-5][m-7]}{[2][6]}$ \\ \hline

$D_{5}$  & $1,5,7$  & $q^{5m - 23} \frac{[m-1][m-5][m-7]}{[2][4][6]}$ \\ \hline
$A_4+A_3$  & $1$ &   $q^{7m-45} \frac{[m-1]}{[2]}$\\ \hline

$A_5+A_1$  & $1,5$ &   $q^{6m-36} \frac{[m-1][m-5]}{[2]^2}$\\ \hline
$D_5(a_1)+A_2$  & $1,5$ & $(q^{m-1}-1) \cdot q^{6m-38} \frac{[m-5]}{[2]}$\\ \hline

$D_6(a_2)$  & $1,5$  &  $(q^{m-1}-1) \cdot q^{5m-29}  \frac{[m-3][m-5]}{[2][4]}$\\ \hline
$D_6(a_2)$, $[1^2]$  & $5$  &  $(q^{m-1}-1) \cdot q^{5m-27}  \frac{[m-5][m-7]}{[2][4]}$\\ \hline
$E_{6}(a_{3})+A_1$  & $1,5$ & $(q^{m-1}-1) \cdot q^{6m - 36}  \frac{[m-5]}{[2]}$ \\ \hline
$E_{6}(a_{3})+A_1$, $[1^2]$ & $5$  & $(q^{m-1}-1) \cdot (-q^{5m -31})  \frac{[m-5]}{[2]}$ \\ \hline
$E_{7}(a_{5})$  & $1,5$ &  $(q^{m-1}-1) \cdot q^{6m-34} \frac{[m-5]}{[2]}$  \\ \hline
$E_{7}(a_{5})$, $[2,1]$  & $5$ & 
$(q^{m-1} - 1)\cdot (-q^{5m-29 })\frac{[m-5]}{[2]}$  \\ \hline
 $E_{7}(a_{5})$, $[1^3]$ & $-$  &
 $(q^{m-1} - 1) \cdot q^{4m-24 }\frac{[m-5]}{[2]}$  \\ \hline
$D_{5}+A_1$  & $1,5$  & $q^{6m-30} \frac{[m-1][m-5]}{[2]^2}$ \\ \hline

$E_{8}(a_{7})$  &  $1$  & $(q^{m-1} -1) \cdot q^{7m - 39}$ \\ \hline
$E_{8}(a_{7}), [4,1]$  &  $5$ &  $(q^{m-1} -1) \cdot (-q^{6m - 34})$ \\ \hline
$E_{8}(a_{7}), [3,2]$  &  $-$ &  $0$ \\ \hline
$E_{8}(a_{7}), [3,1^2]$  &  $-$ &  $(q^{m-1} -1) \cdot q^{5m - 29}$ \\ \hline
$E_{8}(a_{7}), [2^2,1]$  &  $-$ &  $0$ \\ \hline
$E_{8}(a_{7}), [2,1^3]$  &  $-$ &  $(q^{m-1} -1) \cdot (-q^{4m - 24})$ \\ \hline
%%%$E_{8}(a_{7}), [1^5]$  &  $-$ & \text{not in Springer corresp.}  \\ \hline
$A_{6}$  &  $1,5$  & $q^{6m - 28} \frac{[m-1][m-5]}{[2]^2}$ \\ \hline  %% 7m in earlier version
\hline
\end{tabular}
\end{center}
%\label{}
%\end{table}

%\begin{table}
\begin{center}
\begin{tabular}{|c|c|c|} \hline
\multicolumn{3}{|c|}{$E_{8}$, part 3} \\ \hline
\multicolumn{1}{|c|}{$(e, \phi)$}
& \multicolumn{1}{|c|}{$m_i$}
& \multicolumn{1}{|c|}{$f_{e, \phi}$}
\\ \hline \hline
$D_6(a_1)$  &  $1,5$  &  $(q^{m-1} - 1) \cdot q^{5m -23} \frac{[m-3][m-5]}{[2][4]}$\\ \hline
$D_6(a_1)$, $[1^2]$  &  $5$ &  $(q^{m-1} - 1) \cdot q^{5m -21} \frac{[m-5][m-7]}{[2][4]}$\\ \hline
$A_{6}+A_1$  &  $1$ & $q^{7m - 33} \frac{[m-1]}{[2]}$ \\ \hline
$E_{7}(a_{4})$  &  $1,5$ & $(q^{m-1} -1) \cdot q^{6m  - 28} \frac{[m-5]}{[2]} $ \\ \hline
$E_{7}(a_{4})$, $[1^2]$  &  $-$ & 0 \\ \hline
$E_{6}(a_{1})$  &  $1,5$  & $(q^{m-1} -1) \cdot q^{5m  - 19} \frac{[m-1][m-5]}{[2][6]} $ \\ \hline
$E_{6}(a_{1})$, $[1^2]$  &  $4$ & $(q^{m-1} -1) \cdot q^{5m -16} \frac{[m-5][m-7]}{[2][6]}$ \\ \hline
$D_{5}+A_2$  &  $1$ & $q^{7m-31} \frac{[m-1]}{[2]}$ \\ \hline  %T_1
$D_{5}+A_2$, $[1^2]$  &  $-$ & $q^{7m-32} \frac{[m-1]}{[2]}$ \\ \hline  %T_1
$D_{6}$  &  $1,3$ & $q^{6m  -22} \frac{[m-1][m-3]}{[2][4]}$\\ \hline  %B_2
$E_{6}$  &  $1,5$ & $q^{6m  - 18} \frac{[m-1][m-5]}{[2][6]}$\\ \hline    %G_2
$D_{7}(a_{2})$  &  $1$ & $(q^{m-1} -1) \cdot q^{6m-26} \frac{[m-3]}{[2]}$ \\ \hline  %T_1
$D_{7}(a_{2})$, $[1^2]$  &  $4$ & $(q^{m-1} -1) \cdot q^{6m-25} \frac{[m-5]}{[2]}$ \\ \hline
$A_{7}$  &  $1$ & $q^{7m - 27} \frac{[m-1]}{[2]}$\\ \hline %A_1
$E_{6}(a_{1})+A_1$  &  $1$ & $(q^{m-1} -1) \cdot q^{6m-24} \frac{[m-3]}{[2]} $ \\ \hline %T_1
$E_{6}(a_{1})+A_1$, $[1^2]$ &  $4$  & $(q^{m-1} -1) \cdot q^{6m-23} \frac{[m-5]}{[2]} $ \\ \hline %T_1
$E_{7}(a_{3})$  &  $1,3$   & $(q^{m-1} - 1) \cdot q^{6m-22} \frac{[m-3]}{[2]}$\\ \hline %A_1
$E_{7}(a_{3})$, $[1^2]$ &  $4$  & $(q^{m-1} - 1) \cdot (-q^{5m-18}) \frac{[m-3]}{[2]}$\\ \hline %A_1
$E_{8}(b_{6})$  &  $1$ & $(q^{m-1} - 1) \cdot q^{7m-27}$\\ \hline  %#24, 25, 26   degs 1
$E_{8}(b_{6})$, $[2,1]$ & $-$  & $0$ \\ \hline
$E_{8}(b_{6})$, $[1^3]$ & $-$  & $0$  \\ \hline
$D_7(a_1)$ &  $1,3$ & $(q^{m-1} - 1) \cdot q^{6m - 20} \frac{[m-3]}{[2]}$   \\ \hline %#22, 23   degs 1, 3   T_1
$D_7(a_1)$, $[1^2]$ &  $-$ & $(q^{m-1} - 1) \cdot  q^{6m - 21} \frac{[m-3]}{[2]}$   \\ \hline  %T_1
$E_{6}+A_1$  &  $1$ & $q^{7m-23} \frac{[m-1]}{[2]}$ \\ \hline  %#21   degs 1  A_1 
$E_{7}(a_{2})$  &  $1,3$ & $(q^{m-1} - 1) \cdot q^{6m - 18} \frac{[m-3]}{[2]}$ \\ \hline  %#20   degs 1, 3  A_1
$E_{8}(a_{6})$  &  $1$ & $(q^{m-1} - 1) \cdot q^{7m-23}$ \\ \hline  %#17,18,19   degs 1, 3( 2-d)  
$E_{8}(a_{6})$, $[2,1]$  &  $3$ & $(q^{m-1} - 1) \cdot (-q^{6m-20})$ \\ \hline
$E_{8}(a_{6})$, $[1^3]$ & $-$  &$(q^{m-1} - 1) \cdot q^{5m-17}$ \\ \hline
$D_7$ &  $1$ & $q^{7m -19 } \frac{[m-1]}{[2]}$   \\ \hline  %#16   degs 1  A_1
%$E_{8}(b_{5})$  & $q^{6m+4-22} (q^{m-1} - 1)q^{m-3}$ \\ \hline %#13,14,15   degs 1, 3( 2-d)  
$E_{8}(b_{5})$  &  $1$ & $(q^{m-1} - 1) \cdot q^{7m-21}$ \\ \hline %#13,14,15   degs 1, 3( 2-d)  
$E_{8}(b_{5})$, $[2,1]$ &  $3$  & $(q^{m-1} - 1) \cdot (-q^{6m-18})$ \\ \hline
$E_{8}(b_{5})$, $[1^3]$  & $-$ & $ (q^{m-1} - 1) \cdot q^{5m-15}$ \\ \hline
$E_{7}(a_{1})$  &  $1,3$ & $(q^{m-1} - 1) \cdot q^{6m-14} \frac{[m-3]}{[2]}$ \\ \hline %#12   degs 1,3  A_1
$E_{8}(a_{5})$ &  $1$  & $(q^{m-1} -1) \cdot q^{7m -19}$ \\ \hline   %#10,11   degs 1
$E_{8}(a_{5})$, $[1^2]$ & $-$ & $0$\\ \hline
$E_{8}(b_{4})$  &  $1$ & $(q^{m-1} -1) q^{7m -17}$ \\ \hline   %#8,9   degs 1
$E_{8}(b_{4})$, $[1^2]$ & $-$  & $0$\\ \hline
%$E_{7}$ & $q^{7m -13} \frac{q^{m-1} - 1}{[2]}$ \\ \hline   %#7   degs 1  A_1
%$E_{8}(a_{4})$  & $q^{7m -15} (q^{m-1} -1)$   \\ \hline    % degs 1, 2 (sgn)   
%$E_{8}(a_{4})$, sgn  & $-q^{6m -13} (q^{m-1} -1)$ \\ \hline
%$E_{8}(a_{3})$  & $q^{7m -13} (q^{m-1} -1)$ \\ \hline   % degs 1, 2 (sgn)
%$E_{8}(a_{3}), sgn$  & $-q^{6m -11} (q^{m-1} -1)$ \\ \hline
%$E_{8}(a_{2})$  & $q^{7m - 11} (q^{m-1} -1)$ \\ \hline  % degs 1
%$E_{8}(a_{1})$  & $q^{7m - 9} (q^{m-1} -1)$ \\ \hline   % degs 1
%$E_{8}$ & $q^{8m-8}$ \\ \hline
\hline

\end{tabular}
\end{center}
%\label{}
%\end{table}

%\begin{table}
\begin{center}
\begin{tabular}{|c|c|c|} \hline
\multicolumn{3}{|c|}{$E_{8}$, part 4} \\ \hline
\multicolumn{1}{|c|}{$(e, \phi)$}
& \multicolumn{1}{|c|}{$m_i$}
& \multicolumn{1}{|c|}{$f_{e, \phi}$}
\\ \hline \hline
$E_{7}$ &  $1$ & $q^{7m -13} \frac{[m-1]}{[2]}$ \\ \hline   %#7   degs 1  A_1
$E_{8}(a_{4})$  &  $1$ & $(q^{m-1} -1) \cdot q^{7m -15}$   \\ \hline    % degs 1, 2 (sgn)   
$E_{8}(a_{4})$, $[1^2]$ &  $2$  & $ (q^{m-1} -1) \cdot (-q^{6m -13})$ \\ \hline
$E_{8}(a_{3})$  &  $1$ & $(q^{m-1} -1) \cdot q^{7m -13}$ \\ \hline   % degs 1, 2 (sgn)
$E_{8}(a_{3})$, $[1^2]$ & $2$  & $(q^{m-1} -1) \cdot (-q^{6m -11})$ \\ \hline
$E_{8}(a_{2})$  &  $1$ & $(q^{m-1} -1) \cdot q^{7m - 11}$ \\ \hline  % degs 1
$E_{8}(a_{1})$  &  $1$ & $ (q^{m-1} -1) \cdot q^{7m - 9}$ \\ \hline   % degs 1
$E_{8}$ & $-$ & $q^{8m-8}$ \\ \hline
\hline

\end{tabular}
\end{center}

%%%%%%%%%%%%%%%%%%%%%%%%%%%%%%%%%%%%%%%%%%%%%%%%%%%%%%%%%%%%%%%
\section{Proof of Theorem~\ref{divisibility-and-nonnegativity-theorem_all}}
\label{proof-of-divisibility-and-nonnegativity-section}
%%%%%%%%%%%%%%%%%%%%%%%%%%%%%%%%%%%%%%%%%%%%%%%%%%%%%%%%%%%%%%%

Before recalling here the statement of the theorem, and giving
its proof, let us review some of the terminology.
We denote $R = \rank(Z_G(e))$, while
$H^*(\BBB_e)$ denotes the cohomology of the Springer fiber 
for $e \in \0$, regarded as a $W$-representation.
Lastly, the ill-behaved nilpotent
orbits from \eqref{ill-behaved-orbits} are
$$
F_4(a_3), \,
E_6(a_3), \,
E_6(a_3)+A_1, \, 
E_7(a_5), \,
E_7(a_3), \,
E_8(a_7), \,
E_8(a_6), \,
E_8(b_5), \, 
E_8(a_4), \,
E_8(a_3).
$$
\vskip.1in
\noindent
{\bf Theorem~\ref{divisibility-and-nonnegativity-theorem_all}.}
{\it 
Let $e$ be a nilpotent element {\bf not} among the 
ill-behaved orbits from \eqref{ill-behaved-orbits},
and assume that $f_{e,\phi}$ is not identically zero.
Then there exists $L, c \in \NN$, independent of $\phi$, such that
$$f_{e,\phi}(m ;q) = \prod^{L}_{j=1} (q^{m+1-2j} -1) \cdot q^{cm} \cdot g_{\phi}(m; q),$$
where $g_{\phi}(m;q)$ is the sum of at most two products of the form $q^{-z} \displaystyle \prod^{R}_{i=1} \tfrac{[m-a_i]_q}{[b_i]_q}$ for some $a_i, b_i, z \in \NN$.
Moreover,
\begin{enumerate}
\item[(i)] For each very good $m$, the polynomial $q^{cm} \cdot g_{\phi}(m; q)$ 
lies in $\NN[q]$.
\item[(ii)] The rank $r$ of $\ggg$ equals $L + c + R$.
\item[(iii)] The multiplicity of $V$ in the $W$-representation $H^*(\BBB_e)$ 
is $r-c$. 
\item[(iv)] If $e$ is principal-in-a-Levi, then $L=0$. 
In particular, $f_{e,\phi}(m;q) \in \NN[q]$ for each very good $m$. 
\item[(v)] If $e$ is not principal-in-a-Levi, then $L \geq 1$.  In the exceptional types it always happens that $L=1$.
\end{enumerate}

Even when $e$ is one of the ill-behaved orbits from
\eqref{ill-behaved-orbits}, at least
for the case when $\phi=1$, the polynomials $f_{e,1}(m;q)$ 
are always nonzero,
and still have properties (i),(ii),(iv),(v) listed above.
}

\vskip.1in
Let us embark on the proof.
The fact that $f_{e,\phi}$ takes the form asserted in the theorem follows 
from inspection of the formulas for the $f_{e, \phi}$.   
The formula in part (ii) is a consequence of \eqref{formula:f}
and the fact that 
$$\sum_x  \frac{\phi(x)} {|Z_{G^F}(e_x)|} \neq 0,$$ 
whenever $\phi=1$ or $e$ does not belong to one of the orbits in \eqref{ill-behaved-orbits}.
This also explains why $f_{e,1}$ is always nonzero.
The formula in part (iii) is a consequence of \eqref{formula:f}
and the fact that 
$$\sum_x  \frac{\left( \wedge^{d_{\kappa}} \pi_{\kappa} \right) (x) \cdot \phi(x)} {|Z_{G^F}(e_x)|} \neq 0,$$ 
whenever $e$ does not belong to one of the orbits in \eqref{ill-behaved-orbits}, so that 
$c =  r - (\kappa -1 + d_\kappa)$.

For (i), (iv), (v), it remains to show that $L = 0$ if and only if $e$ is principal-in-a-Levi and that $g_{\phi}(m;q)$ has the desired positivity property.  We do this case-by-case.

\subsection{Type $A$}
Since every nilpotent orbit in type $A_n$ is principal-in-a-Levi,
we need to show that for every $\lambda \in \Par(n)$ that 
when $\gcd(m,n)=1$ one has
$$
\frac{1}{[m]_q}\qbin{m}{\mu(\lambda)}{q} \in \NN[q].
$$
%lying in $\NN[q]$.  
According to \cite[Corollary 10.4]{RSW},
this follows if all the $\mu_j(\lambda)$'s together
with $m$ have trivial greatest common divisor.
But this is true since a common divisor of
all the $\mu_j(\lambda)$'s would also be a divisor of 
$
n=|\lambda|=\sum_j j \mu_j(\lambda),
$
and we assumed that $\gcd(m,n)=1$.

\subsection{Types $B, C$}
%As noted in Example~\ref{principal-in-a-Levi-example},
For $\lambda$ in $\Par_B(2n+1)$ or $\Par_C(2n)$, 
the orbit $\0_{\lambda}$ is principal-in-a-Levi if and only if $\hat{L}(\lambda) = 0$.
Thus whenever $\0_\lambda$ is not principal-in-a-Levi,
so that $\hat{L}(\lambda) > 0$,
%the formula for the $q$-Kreweras numbers in Theorem~\ref{q-Kreweras-formula-theorem}(types $B_n,C_n$)
the formula for $f_{e,\phi}$ in Proposition \ref{BC_most_general}
contains as a factor the 
product $\prod_{i=1}^{\hat{L}(\lambda)} (q^{m-2i+1}-1)$.
%,which has a factor of $q^{m-1}-1$.
On the other hand, if $\hat{L}(\lambda) = 0$, this product
is empty, and $f_{e,\phi} \in \NN[q]$
because it is a power of $q$ times a $q$-multinomial.
For the same reason in all cases, aside from this product, the remaining factor lies in $\NN[q]$.

\subsection{Type $D$}
%As noted in Example~\ref{principal-in-a-Levi-example},
For $\lambda \in \Par_D(2n)$,
the orbit $\0_\lambda$ is principal-in-a-Levi if and only if 
$L(\lambda)=0$ or $L(\lambda) =2$ with $\mu_1$ odd.
Note that $|L(\lambda)|$ is always even since $\lambda$ is partition of $2n$. 
We examine separately the three conditions on $\lambda$ in 
Proposition~\ref{D_most_general}.
%Theorem~\ref{q-Kreweras-formula-theorem}(type $D_n$).
\begin{itemize}
\item If $\mu_1$ is odd, then $L(\lambda) \geq 2$,
or equivalently ${\hat{L}(\lambda}) \geq 1$.
Thus in this case, $\0_\lambda$ is principal-in-a-Levi if and only if $\hat{L}(\lambda) = 1$.
Thus the product $\prod_{i=1}^{\hat{L}(\lambda)-1} (q^{m-2i+1}-1)$ is non-empty 
exactly when $\0_\lambda$ is not principal-in-a-Levi.  
The remaining factors in $f_{e,\phi}$ all lie in $\NN[q]$.
%contains a factor of $q^{m-1}-1$ when $\hat{L}(\lambda) > 1$,
%and otherwise $f_{e,\phi} \in \NN[q]$ because it is a power of $q$ times a multinomial.
\item When $L(\lambda) \geq 2$ and $\mu_1$ is even, 
then $\0_\lambda$ is never principal-in-a-Levi.
Since $\hat{L}(\lambda) \geq 1$,
the product $\prod_{i=1}^{\hat{L}(\lambda)} (q^{m-2i+1}-1)$
is always non-empty.
The other terms in $f_{e, \phi}$ lie in $\NN[q]$.
\item When $\hat{L}(\lambda)=0$,
$\0_\lambda$ is always principal-in-a-Levi, and 
 $f_{e,\phi} \in \NN[q]$ because it a sum
of a $q$-multinomial and a product of two $q$-multinomials, shifted by powers of $q$.
\end{itemize}

\begin{remark}
In types $A, B, C,$ as well as in the first case of type $D$,
those $\0_\lambda$ which are principal-in-a-Levi not only have
$\Krew(\Phi,\0_\lambda,m;q)$ in $\NN[q]$, but also have
their coefficient sequence symmetric-- this follows in type $A$
from the same result  \cite[Corollary 10.4]{RSW} quoted earlier,
and follows in the other types because $q$-multinomials have this
property.  

However, this is {\it not} in general true for the third
case in type $D$, even though they are always
principal-in-a-Levi.  For example, when $\lambda=(3,3,1,1)$ in $\Par_D(8)$ 
one has 
$$
%\begin{aligned}
\Krew(D_4,\0_{(3,3,1,1)},m;q)
%&=q^{\exp(\lambda,m)}
%     \left( q^{\frac{\ell(\lambda)}{2}-\tau(\lambda)}
%     \qbin{\hat{m}}{\hat{\mu}(\lambda)}{q^2}  + 
%            q^{\frac{\ell(\lambda)}{2}-\mu_1(\lambda)}
%               \qbin{\hat{m}}{\hat{\mu}_{\geq 2}(\lambda)}{q^2} 
%                \qbin{\hat{m}+1-(\hat{\ell}(\lambda)-\hat{\mu}_1(\lambda))}
%                 {\hat{\mu}_1(\lambda)} {q^2}
%      \right)\\
=
  q^{14} \left( \qbin{\hat{m}}{1,1}{q^2}  + 
  \qbin{\hat{m}}{1}{q^2}^2 \right)
%\end{aligned}
$$
which equals 
$
2 q^{14} + 4q^{16} + 6q^{18} + 7q^{20} + 5q^{22} + 3q^{24} + q^{26}
$ 
when $m=9$.
\end{remark}

\subsection{Exceptional Types}

In the exceptional types many $f_{e,\phi}(m;q)$ can be related to $\Cat(W',m; q)$
for some Weyl group $W'$, which has the desired positivity property.  

Most of the remaining cases 
can be handled by writing  
\begin{equation}  \label{basic_form}
\prod^{R}_{i=1} \frac{[m-a_i]_q}{[b_i]_q}
\end{equation}
as a product of polynomials in $q$ with positive coefficients 
as in the paper of Krattenthaler-M\"uller \cite{KrattenthalerMuller2}.  This is accomplished by restricting $m$ to a fixed congruence class modulo the least common multiple of the $b_i$'s (with $m$ also relatively prime to $h$).   
We wrote a program in Sage \cite{sage}, posted on the second author's webpage, that accomplishes this task, except for a handful of cases, making use of \cite[Corollary 6]{KrattenthalerMuller2}, which states that 
\begin{equation} \label{KM}
\frac{[\gamma]_q [ab]_q} { [a]_q [b]_q }
\end{equation}
is a polynomial in $q$ with positive coefficients when $\gcd(a,b)=1$ and $\gamma \geq (a-1)(b-1)$. 

\begin{example}
Let $e$ be of type $A_1$ in $E_8$.  When $m \equiv 17$ modulo $2520$, we find that 
$$
\frac{ [m-1] [m-7][m-11][m-13][m-17][m-19][m-23]}
{ [2][6][8][10] [12][14][18]}
$$
is equal to 
$$\left [ \frac{m- 17 }{504} \right ]_ {q^{504}} 
\left( \frac{ [ \frac{m- 11}{6}]_ {q^6} [ 84 ]_ {q^6}  }{ [ 3 ]_ {q^6} [ 28 ]_{q^6}    }  \right)
\left( \frac{ [ \frac{m- 19}{2} ]_ {q^2} [ 84 ]_{q^2} }{  [ 7 ]_{q^2}  [ 12 ]_{q^2}  }  \right)
\left( \frac{ [  \frac{m- 13}{4} ]_ {q^4} [ 6 ]_ {q^4}  }{ [ 3 ]_{q^4}  [ 2 ]_{q^4} } \right)
\left[  \frac{m- 7}{10} \right ]_ {q^{10}}
\left[   \frac{m- 23}{6} \right ]_ {q^6}
\left[    \frac{m- 1}{2} \right ]_ {q^2}.$$
Each term is a polynomial with positive coefficients, using \eqref{KM} for the expressions in parentheses. 
%There are multiple ways to write and this one isn't even the most elementary one, since m-1 is divisible by 8
\end{example}

The remaining cases are a few of those where $g_{\phi}(m;q)$ is a sum of two terms of the form in \eqref{basic_form}.   For some congruence classes, each expression of the form  \eqref{basic_form} alone will not even  be polynomial, let alone positive.  We give an example that illustrates how these cases are handled.

\begin{example}
Let $e$ be of type $A_2$ in $E_8$.   The expression for $f_{e,1}(m;q)$, up to a power of $q$, is
$$
\frac{ [m-1] [m-7][m-11][m-13][m-17]\left( [m-5] +q^4[m-23] \right)}
{ [2][6][8][10] [12][18]}.
$$
As long as $\gcd(m, 30)=1$ and $m \not\equiv 29$ modulo $30$, the program returns $f_{e,1}$ as a sum of polynomials
with positive coefficients, possibly after rewriting $[m-5] +q^4[m-23]$ as $[m-19]+q^4[m-9]$.
Otherwise, neither summand as in \eqref{basic_form} is polynomial and this also holds even if we rewrite
$[m-5] +q^4[m-23]$ as $[m-19]+q^4[m-9]$.  In such cases we need to deal with the full expression $[m-5] +q^4[m-23]$. 
For example, when $m \equiv 29$ modulo $360$, we can write 
$ \left[ m-1 \right] \left( \frac{[m-5] +q^4[m-23]}{[6][10]} \right)$ as 
$$\left[ \frac{m-1}{2}  \right]_{q^2} \left( (q^{10}+q^{24}) 
\cdot \left[ \frac{m- 29}{30} \right]_ {q^{30}} \cdot 
%\left( \frac{[ 15 ]_{q^2} }{  [ 3 ]_{q^2}  [ 5]_{q^2}  }  \right) 
\frac{[ 15 ]_{q^2} }{  [ 3 ]_{q^2}  [ 5]_{q^2}  }  
%+  \left( \frac{[12]_{q^2}+q^4 [3]_{q^2} }{ [3]_{q^2} [5]_{q^2} } \right) \right).$$
+   \frac{[12]_{q^2}+q^4 [3]_{q^2} }{ [3]_{q^2} [5]_{q^2} }  \right).$$
Then $ \frac{[12]_{q^2}+q^4 [3]_{q^2} }{ [3]_{q^2} [5]_{q^2}} = q^{10} - q^{8} + q^4 - q^2+1$, so the product of this polynomial
with $\left[ \frac{m-1}{2}  \right]_{q^2}$ has positive coefficients as in the proof of Corollary 6 in \cite{KrattenthalerMuller2}. The remaining terms in $f_{e,1}$
are
$$
\left[    \frac{m- 7}{2} \right ]_ {q^2}
\left[ \frac{m- 11}{18}  \right ]_ {q^{18}} 
\left[ \frac{m- 13}{8}  \right ]_ {q^{8}} 
\left[ \frac{m- 17}{12}  \right ]_ {q^{12}},
$$
and so $f_{e,1}(m;q)$ has positive coefficients when $m \equiv 29$ modulo $360$.   
\end{example}

All cases in the exceptional groups can be handled by reducing to the Catalan case or the case of one of these two examples.  It would be nice to have a uniform proof, or at least one that makes use of the fact that $f_{e,\phi}(m;q)$ is polynomial for all very good $m$.

This completes the proof of 
Theorem~\ref{divisibility-and-nonnegativity-theorem_all}.

%%%%%%%%%%%%%%%%%%%%%%%%%%%%%%%%%%%%%%%%%%%%%%%%%%%%%%%%%%%%%%%

\section{$q$-Narayana formulas}
\label{proof-of-Narayana-formulas-section}

In this section we prove, in types $A,B,C$, that 
the $q$-Kreweras numbers,
%as given by the formulas in Theorem~\ref{q-Kreweras-formula-theorem},
when summed over nilpotent orbits $\0$ with a fixed
value of the statistic $d(\0)$ as in \eqref{q-Narayana-definition},
give the $q$-Narayana formulas in  Theorem~\ref{q-Narayana-formula-theorem}.

In type $A_{n-1}$ we have $d(\0_\lambda) = \ell(\lambda)-1$ from the formula 
for $\Krew(A_{n-1},\0_{\lambda},m;q)$ since $r = n-1$.
Thus we want to show that for $k$ in the range $0 \leq k \leq n-1$ that 
\begin{equation}
\label{desired-q-Narayana-summation-in-type-A}
\sum_{\substack{\lambda \in \Par(n):\\ \ell(\lambda)=k+1}} \Krew(A_{n-1},\0_{\lambda},m;q)
=
\frac{q^{(n-1-k)(m-1-k)}}{[k+1]_q} \qbin{n-1}{k}{q} \qbin{m-1}{k}{q}.
\end{equation}
In types $B_n$ and $C_n$, 
%in light of \eqref{type-BC-kappa-formula} and the identity 
we have $d(\0) = \hat{\ell}(\lambda)$, so we wish to show
for $k$ in the range $0 \leq k \leq n$ that
\begin{equation}
\label{desired-q-Narayana-summation-in-types-BC}
%\sum_{\substack{\lambda \in \Par_B(2n+1):\\ \sBC=k}} \Krew(B_n,e_{\lambda},m;q)
%=\sum_{\substack{\lambda \in \Par_C(2n):\\ \sBC=k}} \Krew(C_n,e_{\lambda},m;q)
\sum_{\substack{\lambda \in \Par_B(2n+1):\\ \hat{\ell}(\lambda) = k}} \Krew(B_n, \0_{\lambda},m;q)
=\sum_{\substack{\lambda \in \Par_C(2n):\\ \hat{\ell}(\lambda) = k}} \Krew(C_n, \0_{\lambda},m;q)
=
(q^{2})^{(n-k)(\hat{m}-k)} \qbin{n}{k}{q^2} \qbin{\hat{m}}{k}{q^2}.
\end{equation}

We now give a proof that relies on counting the number of nilpotent elements (over a finite field) of certain prescribed rank.
In a sequel paper we give some alternative proofs.

%There are essentially three different kinds of 
%proofs of this available for us in type $A$, and two of them
%also work in types $B, C$.  We discuss these in the next three
%sections.
%\subsection{Proof 1:  Via formulas counting nilpotents by rank}

\subsection{Type A}

The sum on the left in \eqref{desired-q-Narayana-summation-in-type-A}
is over nilpotent orbits $\0_\lambda$ with $\ell(\lambda)=k+1$.
In the formula \eqref{formula:typeA} for $f_{e,1}$, all the terms depend only on $\ell(\lambda)$
except for $|Z_G^F(e)|$, where $e=e_\lambda \in \0_\lambda$.  Now $\ell(\lambda)=k+1$ means that 
$e_\lambda$ is of rank $n-k-1$ when viewed as an $n \times n$-matrix.  
It follows that the number of nilpotent $n \times n$-matrices of rank $n-k-1$ over $\FF_q$ is given by
$$
%\begin{aligned}
\sum_{\substack{\lambda \in \Par(n):\\ \ell(\lambda)=k+1}}
 \frac{|G^F|}{|Z_{G^F}(e_{\lambda})|},
%\end{aligned}
$$
and thus 
by \eqref{formula:typeA}
the sum in \eqref{desired-q-Narayana-summation-in-type-A} becomes
\begin{equation}
\label{type-A-nilpotents-by-rank-equation}
%\begin{aligned}
%\sum_{\substack{\lambda \in \Par(n):\\ \ell(\lambda)=k}} \Krew(A_{n-1},e_{\lambda},m;q) = % &\\
% & \qquad \qquad 
q^{m(n- k-1) + \binom{k+1}{2}}\frac{(q-1)^{k+1} [m-1]!_q}{[m-k-1]!_q} 
%\cdot \frac{\#\{\text{nilpotent $n \times n$ matrices of rank }n-k  \}}{|G^F|}
\cdot \frac{\#\{\text{nilpotent $n \times n$ matrices of rank }n-k-1  \}}{|G^F|}.
%\end{aligned}
\end{equation}
The number of nilpotent matrices of rank $n-k-1$ equals
(see \cite{Crabb, Lus1}) 
$$q^{\binom{n-k-1}{2}} \frac{(q-1)^{n-k-1}[n]!_q}{[k+1]!_q} \qbin{n-1}{k}{q}$$
and $|G^F| = q^{\binom{n}{2}} (q-1)^n [n]!_q$.  Substituting these into  
\eqref{type-A-nilpotents-by-rank-equation} gives
\eqref{desired-q-Narayana-summation-in-type-A}.

%$$q^{m(n-k) + \binom{k}{2} + \binom{n-k}{2} - \binom{n}{2}}  = q^{mn - mk +k^2/2 - k/2 + (n-k)^2/2 -(n-k)/2 - n^2/2 + n/2} = q^{mn - mk +k^2-nk}$$

%\medskip

\subsection{Types B, C}
The sums in \eqref{desired-q-Narayana-summation-in-types-BC}
are over orbits $\0_\lambda$ 
where $\hat{\ell}(\lambda)$ is fixed.  
As in type $A_{n-1}$, the formula for $f_{e,1}$ in \eqref{BC_almost} depends only on 
$\hat{\ell}(\lambda)$ except for $|Z_G^F(e)|$, where $e=e_\lambda \in \0_\lambda$.  
Now  $\ell(\lambda)$ is the dimension of the kernel of $e$ in the standard representation of $\ggg$.
Thus the rank of $e_\lambda$ is $2n+1- \ell(\lambda)$ in type $B_n$ and $2n-\ell(\lambda)$ in type $C_n$.
%In type $B_n$, $\ell(\lambda)$ is always odd (i.e., there are no elements of odd rank).
Therefore the condition that $\hat{\ell}(\lambda) = k$ means 
that the rank of $e_\lambda$ is either $2n - 2k$
or $2n-2k-1$.  Now in type $B_n$, $\ell(\lambda)$ is always odd and so in particular there are no elements of odd rank.
Using this interpretation and \eqref{BC_almost}, 
the sums in \eqref{desired-q-Narayana-summation-in-types-BC} become

\begin{equation}
\label{type-BC-nilpotents-by-rank-equations}
\begin{array}{rcl}
\displaystyle\sum_{\substack{\lambda \in \Par_B(2n+1):\\ \hat{\ell}(\lambda) = k}} \Krew(B_n,e_{\lambda},m;q) 
&=& q^{m(n-k)+k^2} \displaystyle \prod_{j=1}^k (q^{m-(2j-1)}-1) \cdot
 \frac{
   \# \left\{ \begin{matrix}
              \text{nilpotent elts in $\ggg$ of rank } \\
               2(n\!-\!k)
              \end{matrix}  \right\}
  }{|G^F|}  \\
\displaystyle\sum_{\substack{\lambda \in \Par_C(2n):\\  
                 \hat{\ell}(\lambda) = k}} \Krew(C_n,e_{\lambda},m;q) 
&=& q^{m(n-k)+k^2} \displaystyle\prod_{j=1}^k (q^{m-(2j-1)}-1) \cdot 
  \frac{
    \#\left\{ \begin{matrix}
             \text{nilpotent elts in $\ggg$ of rank } \\
             2(n\!-\!k) \text{ or } 2(n\!-\!k)\!-\!1  
              \end{matrix} \right\}}{|G^F|} 
\end{array}
\end{equation}

\begin{lemma}
The number of nilpotent  elements of type $B$ of rank $2s$ is equal to the
number of nilpotent  elements of type $C$ of rank $2s$ or $2s-1$.   Both are
equal to 
$$q^{s^2-s} \frac{\eta(2n)}{\eta(2n-2s)} \qbin{n}{s}{q^2}.$$
\end{lemma}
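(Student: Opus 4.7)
The plan is to compute each side of the claimed identity directly, using the centralizer formulas \eqref{points_B} and \eqref{points_C} already established in the paper. Since the number of nilpotent elements of given rank is $\sum_\lambda |\orbit_\lambda(\FF_q)|$ over the appropriate partitions, and $|\orbit_\lambda(\FF_q)| = |G^F|\sum_c 1/|Z_{G^F}(e_c)|$, this reduces to a sum over $\lambda \in \Par_B(2n+1)$ (resp.\ $\Par_C(2n)$) with $\hat{\ell}(\lambda) = n-s$ of the quantities produced by applying Lemmas \ref{full_lemma} and \ref{even_lemma} at the trivial character ($w=0$).

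The first step is to specialize Lemma \ref{full_lemma} to $w=0$, which yields $\sum_c 1/|Z_{G^F}(e_c)| = q^{\tau_0(\lambda)-d^u}/\prod_j \eta(\mu_j)$ in type $C$; analogously Lemma \ref{even_lemma} (applicable since $|\lambda|$ odd forces $|S_1^-|>0$) gives $q^{\tau_1(\lambda)-d^u}/\prod_j \eta(\mu_j)$ in type $B$. Inserting the explicit $d^u$ from Section \ref{value-of-d-section} together with $|G^F| = q^{n^2}\prod_{i=1}^n(q^{2i}-1)$, the second step is to reduce each sum to a weighted enumeration over partitions of $2n$ (or $2n+1$) with fixed $\hat\ell(\lambda)$ and prescribed parities on the multiplicities $\mu_j(\lambda)$.

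The main obstacle is then the combinatorial identity showing that both sums evaluate to $q^{s^2-s}\eta(2n)/\eta(2n-2s) \cdot \qbin{n}{s}{q^2}$. I would handle this by induction on $n$ (or $s$), tracking how $\lambda$ changes as $\mu_1$ is incremented or decremented and matching the resulting recursion against the Pascal-type recurrence for $\qbin{n}{s}{q^2}$. The striking coincidence of the $B$- and $C$-counts (even though the partitions lie in different index sets) begs for a bijective explanation: rewriting the target as $\qbin{n}{s}{q^2}^2 \cdot |GL_s(\FF_{q^2})|$ shows it equals the number of rank-$s$ endomorphisms of $\FF_{q^2}^n$, hinting at an $\FF_{q^2}$-realization of the rank stratum which would make the Langlands-dual $B \leftrightarrow C$ symmetry manifest.

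An alternative geometric route would parametrize any nilpotent $e$ by $U = \im(e)$ together with the induced isomorphism $\bar{e}\colon V/U^\perp \to U$ (using $\ker(e) = \im(e)^\perp$ for $e \in \mathfrak{so}$ or $\mathfrak{sp}$). The Lie-algebra condition $\langle ev,w\rangle + \langle v,ew\rangle = 0$ translates to a non-degenerate alternating (type $B$) or symmetric (type $C$) bilinear form on the $r$-dimensional space $V/U^\perp$, while nilpotency of $e$ is equivalent to nilpotency of the composite $f = \bar{e} \circ \pi_U\colon U \to U$, where $\pi_U\colon U \hookrightarrow V \twoheadrightarrow V/U^\perp$. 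Stratifying by $\dim(U \cap U^\perp)$ and counting inductively would give another route to the formula and would also expose directly why the type $B$ and type $C$ counts agree.
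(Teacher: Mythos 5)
There is a genuine gap: the proposal sets up the count correctly (summing $|G^F|/|Z_{G^F}(e_c)|$ over the orbits $\0_\lambda$ with the appropriate $\ell(\lambda)$, using \eqref{c_sum3} and \eqref{b_sum3} at the trivial character together with the value of $d^u$), but the actual content of the lemma --- that these partition-indexed sums collapse to the closed form $q^{s^2-s}\,\eta(2n)/\eta(2n-2s)\,\qbin{n}{s}{q^2}$ --- is never established. The induction ``tracking how $\lambda$ changes as $\mu_1$ is incremented'' and the matching against a Pascal-type recurrence are only announced, and this is precisely the nontrivial part: in type $C$ two lengths $\ell(\lambda)\in\{2n-2s,\,2n-2s+1\}$ contribute, the multiplicities carry parity constraints, and the exponents $d^u$, $\tau_\epsilon(\lambda)$, $L(\lambda)$ must all conspire; no argument is given that the recursion closes. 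The alternative geometric route (parametrizing $e$ by $U=\im(e)$ and the induced nondegenerate form on $V/U^\perp$, stratifying by $\dim(U\cap U^\perp)$) is likewise only a sketch, though it is close in spirit to how such counts are actually proved; the nice observation that the target equals $\qbin{n}{s}{q^2}^2\,|GL_s(\FF_{q^2})|$, the number of rank-$s$ endomorphisms of $\FF_{q^2}^n$, is correct but no bijection is supplied.

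For comparison, the paper's proof does none of this work: it simply quotes Lusztig's closed formulas \cite[Theorems 3.1, 3.2]{Lus1} for the number of nilpotent elements of fixed rank in types $B_n$ and $C_n$ --- the type $B$ rank-$2s$ count is the stated expression verbatim --- and then adds the rank-$2s$ and rank-$(2s-1)$ counts in type $C$, a two-line computation using $q^{2s}(q^{2n-2s}-1)+(q^{2s}-1)=q^{2n}-1$. So the lemma is an external input plus elementary algebra, whereas your route would amount to re-deriving Lusztig's theorem from the centralizer orders; that would be a legitimate (and self-contained) alternative if completed, and would in fact prove \eqref{type-BC-nilpotents-by-rank-equations} directly, but as it stands the key identity is assumed rather than proved.
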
 

\begin{proof}
We use the formulas in \cite[Theorems 3.1, 3.2]{Lus1}.
The stated formula is exactly \cite[Theorems 3.1]{Lus1} for the number of nilpotent elements of rank $2s$ in type $B_n$.
The number of rank $2s$ and rank $2s-1$ elements in type $C_n$ are, respectively,
%$$q^{s^2+s}  \frac{\eta(n)\eta(n-1)}{\eta(s)\eta(n-s)\eta(n-s-1)} \text{ and } q^{s^2-s} \frac{\eta(n)\eta(n-1)}{\eta(s-1)\eta(n-s)\eta(n-s)}.$$
$$q^{s^2+s}  \qbin{n}{s}{q^2} \frac{\eta(2n-2)) (q^{2n-2s} - 1)}{\eta(2n-2s)} 
\text{ and } q^{s^2-s} \qbin{n}{s}{q^2} \frac{\eta(2n-2))(q^{2s}-1)}{\eta(2n-2s)}.$$
Adding these together gives
\begin{eqnarray*}
%q^{s^2-s}  \qbin{n}{s}{q^2} \frac{\eta(2(n-1))}{ \eta(2(n-s-1))}
%\left( q^{2s}+ \frac{q^{2s}-1}{q^{2(n-s)}-1} \right) = & \\
%q^{s^2-s}  \qbin{n}{s}{q^2} \frac{\eta(2(n-1))}{ \eta(2(n-s-1))}
%\left( \frac{ q^{2s}(q^{2(n-s)}-1)}{  q^{2(n-s)}-1    } + \frac{q^{2s}-1}{q^{2(n-s)}-1} \right) = 
%q^{s^2-s}  \qbin{n}{s}{q^2} \frac{\eta(2n)}{ \eta(2(n-s))}.
q^{s^2-s}  \qbin{n}{s}{q^2} \frac{\eta(2n-2)}{ \eta(2n-2s)}
\left( q^{2s}(q^{2n-2s)}-1) + q^{2s}-1 \right) = 
q^{s^2-s}  \qbin{n}{s}{q^2} \frac{\eta(2n)}{ \eta(2n-2s))}.
\end{eqnarray*}
\end{proof}

From the lemma, it is immediate that the two expressions 
in \eqref{type-BC-nilpotents-by-rank-equations}
are equal.  To evaluate them, we write
$$\displaystyle \prod_{j=1}^k (q^{m - (2j-1)}-1) =  \frac{\eta(m)}{\eta(m - 2k)}.$$ 
With $s=n-k$,
the expressions in \eqref{type-BC-nilpotents-by-rank-equations}
evaluate to 
$$q^{m(n-k)+k^2}  \frac{\eta(m)}{\eta(m - 2k)} \cdot 
q^{(n-k)^2-(n-k)}\frac{\eta(2n)}{\eta(2k)} \qbin{n}{k}{q^2} \cdot \frac{1}{|G^F|}.$$
Since $|G^F| = q^{n^2} \eta(2n)$ in both types, this becomes
$$q^{m(n-k)+k^2 + (n-k)^2-(n-k) - n^2}  \qbin{\hat{m}}{k}{q^2} \qbin{n}{k}{q^2}$$ and
the exponent of $q$ simplifies to $2(n-k)(\hat{m}-k)$ as desired, where as before $\hat{m} = \frac{m-1}{2}$.

\begin{remark}
From the description of special nilpotent pieces in \cite{Lus2},
special nilpotent pieces in type $B$ and $C$ 
consist of some nilpotent orbits whose elements are of rank $2s$ or $2s-1$ for some fixed $s$.
It follows that the set of nilpotent elements of rank $2s$ and $2s-1$ is a union of special nilpotent pieces.
Moreover, a special nilpotent piece in type $B$ corresponds to a special nilpotent piece in type $C$ for the same value of $s$. Therefore the equality between the number of elements of rank $2s$ and $2s-1$ in type $B$ and in type $C$ also follows from Lusztig's work that corresponding special pieces  in $B$ and $C$ have the same cardinality \cite[\S 6.9]{Lus2}.
\end{remark}

%%%%%%%%%%%%%%%%%%%%%%%%%%%%%%%%%%%%%%%%%%%%%%%%%%%%%%%%%%%%%%%
\section{Proof of Theorem~\ref{CSP-theorem}}
\label{proof-of-CSP-theorem-section}
%%%%%%%%%%%%%%%%%%%%%%%%%%%%%%%%%%%%%%%%%%%%%%%%%%%%%%%%%%%%%%%

Recall the statement of the theorem:

\vskip.1in
\noindent
{\bf Theorem~\ref{CSP-theorem}.}
{\it  
In types $A,B,C,D$, for $m \equiv 1$ mod $h$, say $m = sh+1$,
and for each $W$-orbit $[X]$ of intersection subspaces 
of reflecting hyperplanes, $\Krew(\Phi,\0_X,m;q=\omega_{d})$ counts
those $w_1 \leq \cdots \leq w_s$ in $NC^{(s)}(W)$
which are both 
\begin{enumerate}
\item[$\bullet$] fixed under the action
an element of order $d$ in the $\ZZ/sh\ZZ$-action, and
\item[$\bullet$]
 have the subspace $V^{w_1}$ lying in the $W$-orbit $[X]$.
\end{enumerate}
}

\begin{remark}
We mention here how recent work has generalized the type $A_{n-1}$ special case of
Theorem~\ref{CSP-theorem} from the special case $m=sn+1$ 
to the case of {\it all} very good $m$ in type $A_{n-1}$, that is, where $\gcd(m,n)=1$.
In \cite{ArmstrongRhoadesWilliams},  
Armstrong, Rhoades and Williams introduced for all $m > n$ with 
$\gcd(m,n)=1$ the set $NC(n,m)$ of {\it rational} 
or {\it $(n,m)$-noncrossing partitions}.  These $(n,m)$-noncrossing partitions
are a subset of $NC(m-1)$, specializing to $NC^{(s)}(W)$ when $m=sn+1$.
One might ask whether the subset $NC(n,m) \subset NC(m-1)$ 
is closed under the natural dihedral symmetry group of 
order $2(m-1)$ acting on $NC(m-1)$; this was left open in \cite{ArmstrongRhoadesWilliams},
but later resolved affirmatively in work of 
Bodnar and Rhoades \cite{BodnarRhoades}. In fact,
recent work of Bodnar\footnote{B. Rhoades, personal communication, 2016.} 
has shown how to define $NC(n,m)$ when $m < n$, again with a dihedral group of
order $2(m-1)$ acting.

 In particular,
considering the cyclic $\ZZ/(m-1)\ZZ$-action via rotations,
the Bodnar and Rhoades also proved a cyclic sieving phenomenon
\cite[Thm. 5.1]{BodnarRhoades} whose $m=sn+1$ special case is
equivalent to the type $A_{n-1}$ special
case of Theorem~\ref{CSP-theorem}.  These results
involve a $q$-Kreweras number that differs
slightly from the one in Theorem~\ref{q-Kreweras-formula-theorem},
in that it is
missing the factor of  $q^{m(n-\ell(\lambda))-c(\lambda)}$.
However, this power of $q$ makes no difference in the proof of the cyclic sieving phenomenon,
as it happens to equal $1$ whenever the $q$-Kreweras number
evaluated at an $(m-1)^{st}$ root-of-unity is nonvanishing-- see
Lemma \ref{c(lambda)-congruences-lemma} below.  

The aforementioned work of Bodnar also extends this cyclic sieving phenomenon 
to the case $m < n$.
\end{remark}

\vskip.1in
In proving Theorem~\ref{CSP-theorem},
our plan will be to first compute the
evaluations $\Krew(\Phi,\0_X,m;q=\omega_{d})$, and
then compare them with the combinatorial models for
$NC^{(s)}(W)$ in the classical types $A,B,C,D$.

\subsection{Root-of-unity evaluation lemmas}

We collect here a few well-known
observations on evaluating certain
polynomials in $q$ at a primitive $d^{th}$ root of unity $\omega_d$.
The proofs are straightforward and omitted.

\vskip.1in
\noindent
{\bf Warning:}
For the remainder of the paper, we abandon the convention
that ``$a \equiv b$'' means $a \equiv b \bmod{2}$, as we will
now need to often consider equivalence modulo $d$ for
other moduli $d \neq 2$.

\begin{lemma}
\label{root-of-unity-facts}
Let $\omega_d:=e^{{2 \pi i}{d}}$ or any other primitive $d^{th}$ root-of-unity.
\begin{enumerate}
\item[(i)] The polynomial 
$$
[m]_q:=\frac{1-q^m}{1-q}
$$
has $\omega_d$ as a root with multiplicity $1$ or $0$, 
depending on whether $d$ divides $m$ or not.
\item[(ii)]
For any positive integer $m$, the $q$-factorial 
$$
[m]!_q:= [1]_q [2]_q \cdots [m]_q 
$$ 
has $\omega_d$ as a root of multiplicity 
$\lfloor \frac{m}{d} \rfloor$.
\item[(iii)]
For $d$ dividing $N$, the product
$$
[N]_q [N-1]_q \cdots [N-k+1]
$$ 
has $\omega_d$ as a root of multiplicity $\lceil \frac{k}{d} \rceil$.
\item[(iv)] If $a,b$ are positive integers with $a \equiv b \mod d$, then
$$\lim_{q \rightarrow \omega_d} \frac{[a]_q}{[b]_q} = 
\begin{cases} 
 \frac{a}{b} & \text{ if } a \equiv b \equiv 0 \mod d \\
 1 & \text{ if } a \equiv b \not\equiv 0 \mod d.
\end{cases}
$$
\item[(v)]
For nonnegative $n,k$ expressed uniquely as
$
n=d \cdot \hat{n} + \hat{\hat{n}} 
$
and 
$
k=d \cdot \hat{k} + \hat{\hat{k}}
$
with $0 \leq \hat{\hat{k}}, \hat{\hat{n}} < d$, one has
$$
\lim_{q \rightarrow \omega_d} \qbin{n}{k}{q} = 
\binom{\hat{n}}{\hat{k}} \cdot 
\lim_{q \rightarrow \omega_d} \qbin{\hat{\hat{n}}}{\hat{\hat{k}}}{q} 
$$
\end{enumerate}
\end{lemma}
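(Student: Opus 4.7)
The plan is to handle each of the five parts through the factorization $1 - q^m = \prod_{k=0}^{m-1}(1 - \omega_m^k q)$ over $\CC$, combined with elementary multiplicity counting and a single L'H\^opital computation.

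Part (i) follows from writing $[m]_q = (1-q^m)/(1-q)$ and noting that $\omega_d$ is a root of $1-q^m$ precisely when $\omega_d^m = 1$, equivalently $d \mid m$; the roots of $1-q^m$ are simple, and $\omega_d \neq 1$ (the lemma is implicitly applied with $d > 1$), so the multiplicity is $1$ or $0$ accordingly.  Part (ii) is obtained by summing multiplicities over the factors of $[m]!_q = \prod_{j=1}^{m}[j]_q$: by (i), the total multiplicity equals the number of $j \in \{1, \dots, m\}$ with $d \mid j$, namely $\lfloor m/d \rfloor$.  The same count applied to the window $\{N-k+1, \dots, N\}$---which under the hypothesis $d \mid N$ is a block of $k$ consecutive integers ending at a multiple of $d$---yields $\lceil k/d \rceil$ multiples of $d$, proving (iii).

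For (iv), I write $[a]_q/[b]_q = (1-q^a)/(1-q^b)$.  When $a \equiv b \not\equiv 0 \pmod{d}$, evaluation at $q = \omega_d$ gives a ratio of equal, nonzero complex numbers, so the limit is $1$.  When $a \equiv b \equiv 0 \pmod{d}$, both $1-q^a$ and $1-q^b$ vanish at $\omega_d$, and a single application of L'H\^opital yields $\lim_{q \to \omega_d}\frac{-aq^{a-1}}{-bq^{b-1}} = (a/b)\,\omega_d^{a-b} = a/b$, since $d \mid a-b$.

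Part (v) is the classical $q$-Lucas theorem, and my plan is to derive it by multiplicity bookkeeping.  Expanding $\qbin{n}{k}{q} = [n]!_q / \bigl([k]!_q\,[n-k]!_q\bigr)$ and applying (ii), the multiplicity of $\omega_d$ in the numerator is $\hat{n}$ and in the denominator is $\hat{k} + \lfloor (n-k)/d \rfloor$.  A case split on whether $\hat{\hat{n}} \geq \hat{\hat{k}}$ (no borrow in the base-$d$ expansion) or $\hat{\hat{n}} < \hat{\hat{k}}$ (borrow) identifies $\lfloor (n-k)/d \rfloor$ as $\hat{n} - \hat{k}$ or $\hat{n} - \hat{k} - 1$, respectively.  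In the borrow case the numerator acquires one extra factor of $(q - \omega_d)$, forcing the limit to be $0$, which agrees with the right-hand side since $\qbin{\hat{\hat{n}}}{\hat{\hat{k}}}{\omega_d} = 0$ whenever $\hat{\hat{n}} < \hat{\hat{k}}$.  In the non-borrow case the multiplicities cancel exactly, and applying (iv) factor by factor to the surviving quotient collapses to $\binom{\hat{n}}{\hat{k}} \cdot \qbin{\hat{\hat{n}}}{\hat{\hat{k}}}{\omega_d}$.  The only real obstacle is keeping the bookkeeping clean through the borrow case; otherwise the argument is formulaic.
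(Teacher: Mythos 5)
Your proof is correct; note that the paper offers no proof to compare against — it states before the lemma that ``the proofs are straightforward and omitted,'' and your argument is exactly the standard one the authors have in mind (cyclotomic multiplicity counting for (i)--(iii), L'H\^opital for (iv), and the $q$-Lucas borrow/no-borrow analysis for (v)). Two small points worth tightening. First, as you observe for (i), parts (i)--(iii) require $d\geq 2$ (for $d=1$ the multiplicity of $\omega_1=1$ in $[m]!_q$ is $0$, not $m$); this is harmless since the paper always treats $d=1$ by direct substitution $q=1$. Second, in the no-borrow case of (v) the phrase ``applying (iv) factor by factor to the surviving quotient'' is slightly loose: there is no pairing of numerator factors $[n-k+j]_q$ with denominator factors $[j]_q$ in which all residues match, so (iv) only handles the factors whose index is a multiple of $d$ (numerator multiples $d(\hat{n}-\hat{k}+1),\ldots,d\hat{n}$ against denominator multiples $d,2d,\ldots,d\hat{k}$, yielding $\binom{\hat{n}}{\hat{k}}$); the factors with nonzero residue are instead evaluated directly at $q=\omega_d$, using that $[a]_{\omega_d}$ depends only on $a \bmod d$, so the complete residue blocks cancel and the leftover residues assemble into $\lim_{q\to\omega_d}\qbin{\hat{\hat{n}}}{\hat{\hat{k}}}{q}$. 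With that wording fixed, the bookkeeping closes and the proof is complete.
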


In types $B, C, D$, we will need to evaluate certain
polynomials in $q^2$ at $q=\omega_d$.
As notation, let
$$
d^-:=\frac{d}{\gcd(2,d)} = 
\left\{
\begin{aligned}
      d & \text{ for }d\text{ odd},\\
     \frac{d}{2}& \text{ for }d\text{ even}.
\end{aligned}
\right\}
\qquad \text{ and } \qquad 
d^+:=\lcm(2,d) =
\left\{
\begin{aligned}
       d & \text{ for }d\text{ even},\\
      2d& \text{ for }d\text{ odd}
\end{aligned}
\right\}
=2 d^-.
$$
Some facts that will be used frequently without mention are that, for an even integer $2N$, 
$$
d\text{ divides }2N \quad \Leftrightarrow \quad
d^+\text{ divides }2N \quad \Leftrightarrow \quad
d^-\text{ divides }N,
$$
and in this situation,
$$
\frac{2N}{d^+} = \frac{N}{d^-}.
$$

The proof of the following assertions are then straightforward.

\begin{lemma}
\label{BCD-root-of-unity-facts}
Assume throughout that {\bf $d$ divides $2N$}.
\begin{enumerate}
\item[(i)] For a sequence of nonnegative integers
$(\alpha_1,\ldots,\alpha_\ell)$, one has 
$$
\lim_{q \rightarrow \omega_d}
\qbin{N}{\alpha_1,\ldots,\alpha_\ell}{q^2}=
\begin{cases}
\binomial{\frac{2N}{d^+}}{\frac{2\alpha_1}{d^+},\ldots,\frac{2\alpha_\ell}{d^+}}
=\binomial{\frac{N}{d^-}}{\frac{\alpha_1}{d^-},\ldots,\frac{\alpha_\ell}{d^-}}
&\text{ if }d\text{ divides }2\alpha_i\text{ for each }i,\\
0&\text{ otherwise.}
\end{cases}
$$
\item[(ii)]
Given a nonnegative integer $k$, for $d=1,2$ one has
$$
\lim_{q \rightarrow \omega_d}
\qbin{N+1}{k}{q^2}=
\binomial{N+1}{k}
$$
but for $d \geq 3$ one has
$$
\lim_{q \rightarrow \omega_d}
\qbin{N+1}{k}{q^2}=
\begin{cases}
\binomial{\frac{2N}{d^+}}{\lfloor\frac{2k}{d^+}\rfloor}
=\binomial{\frac{N}{d^-}}{\lfloor\frac{k}{d^-}\rfloor}
&\text{ if }k \equiv 0,1 \bmod{d^-} \\
0&\text{ otherwise.}
\end{cases}
$$
\end{enumerate}
\end{lemma}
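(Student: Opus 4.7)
The plan is to reduce both parts to the root-of-unity evaluations recorded in Lemma~\ref{root-of-unity-facts}, together with its straightforward $q$-multinomial extension. The key preliminary observation is that the substitution $u=q^2$ transforms evaluation at $q=\omega_d$ into evaluation at $u=\omega_d^2$, and this latter element is a primitive $d^-$-th root of unity (this being precisely why $d^-=d/\gcd(2,d)$ and $d^+=2d^-$ are defined as they are). Hence both assertions reduce to the $q$-multinomial form of the $q$-Lucas theorem at $u=\omega_e$ with $e=d^-$: writing $n=Qe+R$ and $k_i=Q_ie+R_i$ with $0\le R,R_i<e$, one has
$$\lim_{u\to\omega_e}\qbin{n}{k_1,\dots,k_\ell}{u}=\binom{Q}{Q_1,\dots,Q_\ell}\cdot \lim_{u\to\omega_e}\qbin{R}{R_1,\dots,R_\ell}{u},$$
which extends part (v) of Lemma~\ref{root-of-unity-facts} in the obvious way.

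For part (i), the hypothesis $d\mid 2N$ gives $d^-\mid N$, so $R=N\bmod e=0$. The residual $q$-multinomial $\qbin{0}{R_1,\dots,R_\ell}{u}$ equals $1$ when every $R_i=0$ (equivalently, $d\mid 2\alpha_i$ for all $i$) and equals $0$ otherwise. When nonzero, the limit is $\binom{N/e}{\alpha_1/e,\dots,\alpha_\ell/e}$, which matches the asserted formula once one notes $N/d^-=2N/d^+$ and $\alpha_i/d^-=2\alpha_i/d^+$.

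For part (ii), when $d\in\{1,2\}$ we have $d^-=1$, so $\omega_d^2=1$ and the evaluation is the ordinary binomial $\binom{N+1}{k}$. For $d\ge 3$ one has $e=d^-\ge 2$, and combining $d^-\mid N$ with $e\ge 2$ yields $(N+1)\bmod e=1$. The $q$-Lucas factorization then reads $\binom{N/e}{\lfloor k/e\rfloor}\cdot\lim_{u\to\omega_e}\qbin{1}{k\bmod e}{u}$; the residual $\qbin{1}{k\bmod e}{u}$ evaluates to $1$ if $k\bmod e\in\{0,1\}$ and to $0$ otherwise, matching the stated case split.

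The proof is essentially bookkeeping once the $q$-Lucas theorem and the passage from $\omega_d$ to $\omega_d^2=\omega_{d^-}$ are set up. The only mildly subtle point is that the split $d=1,2$ versus $d\ge 3$ in part (ii) corresponds exactly to whether $(N+1)\bmod d^-$ equals $0$ or $1$, which in turn reflects that $d^-=1$ precisely in the two exceptional cases.
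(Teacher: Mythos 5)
Your proposal is correct, and it follows exactly the route the paper has in mind when it declares these assertions ``straightforward'' and omits the proof: substitute $u=q^2$, observe that $\omega_d^2$ is a primitive $d^-$-th root of unity, and apply the $q$-Lucas evaluation of Lemma~\ref{root-of-unity-facts}(v) with $d^-\mid N$ forcing residue $0$ (part (i)) or $1$ (part (ii), $d\geq 3$). The only step you leave implicit, the multinomial form of $q$-Lucas, is indeed routine by iterating the binomial case through the factorization $\qbin{n}{k_1,\ldots,k_\ell}{u}=\qbin{n}{k_1}{u}\qbin{n-k_1}{k_2}{u}\cdots$, with the zero cases on both sides matching whenever the residues over-sum.
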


\subsection{The $q$-Kreweras numbers evaluated at roots of unity}
We next use Lemmas~\ref{root-of-unity-facts} and
\ref{BCD-root-of-unity-facts} together with our formulas for
$\Krew(\Phi,\0_\lambda,m,q)$ to evaluate them at $q=\omega_d$ 
whenever $\0_\lambda$ is principal-in-a-Levi, and $d$ divides 
$m-1=sh$ for some $s \geq 1$.  
We first compute only
their complex modulus, ignoring
multiplicative factors of powers of $q$ (Proposition~\ref{evaluations-prop}).
Then we check they are correct on the nose, 
not just up to modulus (Proposition~\ref{q-powers-become-plus-1}).

\begin{proposition}
\label{evaluations-prop}
Let $s$ be a positive integer, and assume that $d$ divides $m-1=sh$.
\begin{itemize}
\item[(i)]
In type $A_{n-1}$ one has $h=n$, so that $m-1=sn$.
Every $\lambda$ in $\Par(n)$ has $\0_\lambda$ principal-in-a-Levi.
Then 
$
\Krew(A_{n-1},\0_\lambda,m;q=\omega_d)
$
is nonvanishing if and only if at most one $\mu_{j_0}(\lambda)$
is not divisible by $d$, and if such a $j_0$ exists, 
then $\mu_{j_0}(\lambda) \equiv 1 \bmod{d}$.  Furthermore, in this situation
$$
\left\lVert
\Krew(A_{n-1},\0_\lambda,m;q=\omega_d)
\right\rVert
=
\left\lVert
\lim_{q \rightarrow \omega_d}
\frac{1}{[m]_q}\qbin{m}{\mu(\lambda)}{q}
\right\rVert
=
\begin{cases}
\displaystyle \frac{1}{m}\binomial{m}{\mu(\lambda)} & \text{ if }d=1,\\
\binomial{ \frac{sn}{d} }
         { \lfloor \frac{\mu(\lambda)}{d}\rfloor } & \text{ if } d \geq 2.
\end{cases}
$$
\item[(ii)] 
In type $B_n, C_n$ one has $h=2n$, so that $m-1=2sn$.
Then $\lambda$ in $\Par_B(2n+1)$ or $\Par_C(2n)$ 
has $\0_\lambda$ principal-in-a-Levi if and only if 
at most one part $j_0$ has $\mu_{j_0}(\lambda)$ odd,
that is $\hat{L}(\lambda)=0$.
Then 
$
\Krew(\Phi,\0_\lambda,m;q=\omega_d)
$
is nonvanishing if and only if $d$ divides $2\hat{\mu}_j(\lambda)$ for each $j$,
in which case
$$
\left\lVert
\Krew(\Phi,\0_\lambda,m;q=\omega_d)
\right\rVert
=
\left\lVert
\lim_{q \rightarrow \omega_d}
\qbin{sn}{\hat{\mu}(\lambda)}{q^2}
\right\rVert
=\binomial{\frac{sn}{d^-}}
          {  \frac{\hat{\mu}(\lambda)}{d^-} }
$$

\item[(iii)] 
In type $D_n$ one has $h=2(n-1)$, so that $m-1=2s(n-1)$.
Then $\lambda$ in $\Par_D(2n)$, 
has $\0_\lambda$ principal-in-a-Levi if and only if either
\begin{itemize}
\item[$\bullet$] there are two part sizes, namely $1$ and
some odd $j_0 \geq 3$, with odd multiplicity, so  $\hat{L}(\lambda)=1$, or
\item[$\bullet$] there are no parts with odd multiplicity, that is, $\hat{L}(\lambda)=0$.
\end{itemize}
In the former $\hat{L}(\lambda)=1$ case,
$
\Krew(D_n,\0_\lambda,m;q=\omega_d)
$
is nonvanishing if and only if $d$ divides $2\hat{\mu}_j(\lambda)$ for all $j$, 
in which case 
$$
\left\lVert
\Krew(D_n,\0_\lambda,m;q=\omega_d)
\right\rVert
=
\left\lVert
\lim_{q \rightarrow \omega_d}
\qbin{s(n-1)}{\hat{\mu}(\lambda)}{q^2}
\right\rVert
=
\binomial{\frac{s(n-1)}{d^-}}
         { \frac{\hat{\mu}(\lambda)}{d^-} }.
$$
In the latter $\hat{L}(\lambda)=0$ case,
$
\Krew(D_n,\0_\lambda,m;q=\omega_d)
$
is nonvanishing if and only if 
both 
\begin{itemize}
\item[(a)] $\mu_j(\lambda) \equiv 0 \bmod{d^+}$ for all $j \geq 2$, and
\item[(b)] $\mu_1(\lambda) \equiv 0 \text{ or } 2 \bmod{d^+}$,
\end{itemize}
in which case 
$$
\begin{aligned}
&\left\lVert
\Krew(D_n,\0_\lambda,m;q=\omega_d)
\right\rVert \\
&=\left\lVert
\lim_{q \rightarrow \omega_d}
\left(
  \qbin{s(n-1)}{\hat{\mu}_{\geq 2}(\lambda)}{q^2} 
\qbin{s(n-1)+1-|\hat{\mu}_{\geq 2}(\lambda)|)}
     {\hat{\mu}_1(\lambda)} {q^2} 
+
q^{\mu_1(\lambda)-\tau_1(\lambda)}  
\qbin{s(n-1)}{\hat{\mu}(\lambda)}{q^2}  
\right)
\right\rVert \\
&=
\begin{cases}
  \binomial{s(n-1)}{\hat{\mu}_{\geq 2}(\lambda)}
\binomial{s(n-1)+1-|\hat{\mu}_{\geq 2}(\lambda)|}
     {\hat{\mu}_1(\lambda)} 
+  \binomial{s(n-1)}{\hat{\mu}(\lambda)} 
    &\text{ if }d=1, \\
   \binomial{s(n-1)}{\hat{\mu}_{\geq 2}(\lambda)}
\binomial{s(n-1)+1-|\hat{\mu}_{\geq 2}(\lambda)|}
     {\hat{\mu}_1(\lambda)} 
+  (-1)^n \binomial{s(n-1)}{\hat{\mu}(\lambda)} 
    &\text{ if }d=2, \\
  \left( 1+(-1)^{\frac{2n}{d}} \right) 
%    \underbrace{ 
       \binomial{\frac{2s(n-1)}{d^+}}
                          { \frac{\mu(\lambda)}{d^+} } 
%}_{\binomial{\frac{s(n-1)}{d^-}}{ \frac{\mu(\lambda)}{d^-} }}  
    & \text{ if }d\geq 3\text{ and }\mu_1(\lambda) \equiv 0 \bmod{d^+},\\

 \binomial{\frac{2s(n-1)}{d^+}}
         { \frac{\mu_1(\lambda)-2}{d^+}, \frac{\mu_{\geq 2}(\lambda)}{d^+} }
%  =\binomial{\frac{2s(n-1)}{d^-}}
%         {  \frac{\hat{\mu}_1(\lambda)-1}{d^-}, \frac{\hat{\mu}_{\geq 2}(\lambda)}{d^-} } 
    & \text{ if }d\geq 3\text{ and }\mu_1(\lambda) \equiv 2 \bmod{d^+}. \\
\end{cases}
\end{aligned}
$$
\end{itemize}
\end{proposition}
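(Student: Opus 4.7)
The plan is to substitute the explicit formulas for $\Krew(\Phi,\0_\lambda,m;q)$ from Theorem~\ref{q-Kreweras-formula-theorem} into $q=\omega_d$, discard the overall power-of-$q$ prefactors (which have modulus $1$), and evaluate the remaining $q$-multinomial factors via Lemmas~\ref{root-of-unity-facts} and~\ref{BCD-root-of-unity-facts}.  First I will verify the stated characterization of the principal-in-a-Levi orbits case-by-case from the formulas: everything in type $A$, the condition $\hat L(\lambda)=0$ in types $B,C$, and the dichotomy $\hat L(\lambda)=0$ or $(\hat L(\lambda)=1$ with $\mu_1$ odd$)$ in type $D$, as recorded in Section~\ref{proof-of-divisibility-and-nonnegativity-section}.

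For type $A$, I will use that $[m]_q\big|_{q=\omega_d}=1$ whenever $d\geq 2$ and $d\mid m-1$ (since then $\omega_d^m=\omega_d$), which reduces the problem to evaluating $\qbin{m}{\mu(\lambda)}{q}$ at $q=\omega_d$; Lemma~\ref{root-of-unity-facts}(v) applied to the decomposition $m=d\cdot(sn/d)+1$ then yields both the vanishing criterion and the asserted modulus, while the $d=1$ case is the trivial specialization.  For types $B,C$ and the $\hat L(\lambda)=1$ subcase of type $D$, the product $\prod_{i=1}^{\hat L(\lambda)-\epsilon}(q^{m-2i+1}-1)$ is empty (with $\epsilon=0$ or $1$ respectively), so the formula collapses to a power of $q$ times the single $q^2$-multinomial $\qbin{\hat m}{\hat\mu(\lambda)}{q^2}$, and Lemma~\ref{BCD-root-of-unity-facts}(i) produces the asserted binomial on the nose.

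The main obstacle is the $\hat L(\lambda)=0$ case in type $D$, where $\Krew$ is (up to an overall power of $q$) the two-term sum displayed in the statement.  For $d\in\{1,2\}$, I will evaluate each summand to an ordinary (multi)nomial via Lemma~\ref{BCD-root-of-unity-facts}(i) and its $d=1,2$ branch of (ii) (which preserves the ``$+1$'' in the upper argument), so the two summands do not combine further; the sign $(-1)^n$ in the $d=2$ answer will follow from the parity identity $\mu_1-\tau_1\equiv n\pmod 2$, itself an immediate consequence of $n=\sum_j j\hat\mu_j\equiv\sum_{j\textrm{ odd}}\hat\mu_j\pmod 2$ when all $\mu_j$ are even (which is automatic in this subcase).

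For $d\geq 3$, Lemma~\ref{BCD-root-of-unity-facts} forces $d^+\mid\mu_j$ for all $j\geq 2$ and $\mu_1\equiv 0$ or $2\pmod{d^+}$.  When $\mu_1\equiv 2\pmod{d^+}$, the first summand vanishes (since $\mu_1\not\equiv 0\pmod d$ given $d\geq 3$), and the classical multinomial identity $\binomial{M}{a_1,a_2,\ldots}=\binomial{M}{a_2,\ldots}\binomial{M-|a_{\geq 2}|}{a_1}$ converts the second summand into the asserted $\binomial{2s(n-1)/d^+}{(\mu_1-2)/d^+,\mu_{\geq 2}(\lambda)/d^+}$.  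When $\mu_1\equiv 0\pmod{d^+}$, both summands evaluate to the same multinomial $\binomial{2s(n-1)/d^+}{\mu(\lambda)/d^+}$ up to the scalar $\omega_d^{\mu_1-\tau_1}$, and the final bookkeeping step will be to show this scalar equals $(-1)^{2n/d}$: the hypotheses $d^+\mid\mu_j$ for all $j$ force $n\equiv 0\pmod{d^-}$ and $\tau_1/d^-\equiv n/d^-\pmod 2$, and separating the cases $d$ odd (where $\omega_d^{d^-\cdot\mathrm{integer}}=1$ and $2n/d$ is even automatically) and $d$ even (where $\omega_d^{d/2}=-1$) yields the uniform prefactor $1+(-1)^{2n/d}$.
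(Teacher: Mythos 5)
Most of your plan coincides with the paper's own proof: substitute the formulas of Theorem~\ref{q-Kreweras-formula-theorem}, discard the unit-modulus powers of $q$, and evaluate with Lemmas~\ref{root-of-unity-facts} and~\ref{BCD-root-of-unity-facts}. Your type $A$ route via $[m]_{q=\omega_d}=1$ and Lemma~\ref{root-of-unity-facts}(v) is a harmless variant of the paper's multiplicity count, and your treatment of types $B,C$, of the $\hat{L}(\lambda)=1$ case in type $D$, and of the subcases $d\in\{1,2\}$ and ($d\geq 3$ with $\mu_1(\lambda)\equiv 0\bmod d^+$) is correct; in particular your sign bookkeeping $\omega_d^{\mu_1(\lambda)-\tau_1(\lambda)}=(-1)^{2n/d}$ reproduces the paper's lemma.

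The genuine gap is the subcase $d\geq 3$, $\hat{L}(\lambda)=0$, $\mu_1(\lambda)\equiv 2\bmod d^+$: you have the roles of the two summands reversed. The first summand does \emph{not} vanish: its $\mu_1$-dependence sits in $\qbin{s(n-1)+1-|\hat{\mu}_{\geq 2}(\lambda)|}{\hat{\mu}_1(\lambda)}{q^2}$, whose upper argument has the ``$N+1$'' form, so Lemma~\ref{BCD-root-of-unity-facts}(ii) governs it; since $\hat{\mu}_1(\lambda)\equiv 1 \bmod d^-$ it is nonzero, with value $\binomial{2N/d^+}{\lfloor \mu_1(\lambda)/d^+\rfloor}$, and the floor is exactly what produces the entry $(\mu_1(\lambda)-2)/d^+$; combined with the first factor via Lemma~(i), the first summand alone yields the asserted multinomial. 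It is the \emph{second} summand $q^{\mu_1(\lambda)-\tau_1(\lambda)}\qbin{s(n-1)}{\hat{\mu}(\lambda)}{q^2}$ that vanishes, because Lemma~(i) requires $d \mid 2\hat{\mu}_1(\lambda)=\mu_1(\lambda)$, which fails when $\mu_1(\lambda)\equiv 2\bmod d^+$ and $d\geq 3$. Your proposed mechanism cannot be repaired: splitting $\qbin{s(n-1)}{\hat{\mu}(\lambda)}{q^2}$ by the multinomial identity leaves a factor $\qbin{s(n-1)-|\hat{\mu}_{\geq 2}(\lambda)|}{\hat{\mu}_1(\lambda)}{q^2}$ with no ``$+1$'' in the upper argument, still governed by Lemma~(i) and hence zero, so following your assignment literally the whole expression evaluates to $0$, contradicting the stated nonzero binomial. (The paper's prose at this point contains the same summand swap, but its displayed answer is the one coming from the first summand via Lemma~(ii), which is the only way a formula involving $(\mu_1(\lambda)-2)/d^+$ can arise.) Note also that your stated reason for the first summand vanishing, namely $\mu_1(\lambda)\not\equiv 0 \bmod d$, is the criterion of Lemma~(i), which by your own setup applies to the second summand, not the first.
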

\begin{proof}

\vskip.1in
\noindent
{\sf Type A.}
The $d=1$ case is clear, since one is setting $q=\omega=1$.
Thus without loss of generality, $d \geq 2$.
We know from Theorem~\ref{divisibility-and-nonnegativity-theorem} that 
$$
\frac{1}{[m]_q}\qbin{m}{\mu(\lambda)}{q}
=\frac{[m-1]_q [m-2]_q \cdots [m-\ell(\lambda)+1]_q}
{\prod_j [\mu_j(\lambda)]!_q}
$$
is a polynomial in $q$.  
Lemma~\ref{root-of-unity-facts}(ii,iii) tell us that it
has $\omega_d$ as a root of multiplicity 
$\lfloor \frac{\ell(\lambda)-1}{d} \rfloor$
in the numerator, and of multiplicity
$\sum_{j \geq 1} \lfloor \frac{\mu_j(\lambda)}{d} \rfloor$
in the denominator.  Hence one must always have the
inequality
\begin{equation}
\label{type-A-numerator-denominator-inequality}
\left\lfloor \frac{\ell(\lambda)-1}{d} \right\rfloor
\geq \sum_{j \geq 1} \left\lfloor \frac{\mu_j(\lambda)}{d} \right\rfloor
\end{equation}
and this must be an equality whenevever this polynomial 
is nonvanishing at $q=\omega_d$.
Writing $r_j$ for the
remainder of $\mu_j(\lambda)$ on division by $d$ 
with $0 \leq r_j \leq d-1$, equality 
in \eqref{type-A-numerator-denominator-inequality}
would force
$$
\frac{\ell-1}{d} 
\leq
\left\lceil \frac{\ell-1}{d} \right\rceil
=
\sum_{j \geq 1} \left\lfloor \frac{\mu_j(\lambda)}{d} \right\rfloor = 
\sum_{j \geq 1} \frac{\mu_j-r_j}{d} = 
\frac{\ell-\sum_{j \geq 1} r_j}{d}.
$$
Thus $\sum_j r_j \leq 1$, or in other words, at most one
of the $\mu_j(\lambda)$ is not divisible by $d$, and its remainder is $1$.  
In this situation, one can use Lemma~\ref{root-of-unity-facts}(iv) to match up
numerator and denominator factors yielding the asserted
evaluation in (i).

\vskip.1in
\noindent
{\sf Types B, C.}
This follows from Lemma~\ref{BCD-root-of-unity-facts}(i) with $N=sn$.

\vskip.1in
\noindent
{\sf Type D.}
The first case, where $\hat{L}(\lambda)=1$, 
similarly to types $B/C$, follows from 
Lemma~\ref{BCD-root-of-unity-facts}(i) with $N=s(n-1)$.  

In the second case, where $\hat{L}(\lambda)=0$, we must set $q=\omega_d$ in 
\begin{equation}
\label{type-D-case-4-without-q-power}
  \qbin{s(n-1)}{\hat{\mu}_{\geq 2}(\lambda)}{q^2} 
\qbin{s(n-1)+1-|\hat{\mu}_{\geq 2}(\lambda)|}
     {\hat{\mu}_1(\lambda)} {q^2} 
+  q^{\mu_1(\lambda)-\tau_1(\lambda)} \qbin{s(n-1)}{\hat{\mu}(\lambda)}{q^2}.
\end{equation}
Note that Lemma~\ref{BCD-root-of-unity-facts}(i) 
with $N=s(n-1)$ shows that,
whenever condition (a) above fails, both summands in 
\eqref{type-D-case-4-without-q-power} vanish-- the first summand vanishes
because its first factor vanishes.  Similarly, 
Lemma~\ref{BCD-root-of-unity-facts}(ii) with
$N=s(n-1)-|\mu_{\geq 2}(\lambda)|$ shows that whenever
condition (b) above fails, both summands 
in \eqref{type-D-case-4-without-q-power}
vanish-- the second factor in
the first summand vanishes
unless $\mu_1(\lambda) \equiv 0$ or $2 \bmod{d^+}$.

Hence without loss of generality we may assume
both conditions (a),(b) hold, and we examine what
happens when one evaluates
\eqref{type-D-case-4-without-q-power} at $q=\omega_d$
for various values of $d$. 
Note that when $d=1$ so that $q=1$, it gives the asserted
evaluation, so without loss of generality, $d \geq 2$.

\begin{lemma}
A $\lambda$ in $\Par_D(2n)$ with $\mu_j(\lambda) \equiv 0 \bmod{d^+}$ has
$2n \equiv 0 \bmod{d^+}$ and
$
\omega_d^{\mu_1(\lambda)-\tau_1(\lambda)} = (-1)^{\frac{2n}{d}}.
$
\end{lemma}
\begin{proof}
One has 
$$
2n=|\lambda|=\sum_j j \mu_j(\lambda) \equiv 0 \bmod{d^+}
$$
and
$$
\mu_1(\lambda)-\tau_1(\lambda)
= \mu_1(\lambda)-\frac{1}{2}\sum_{j \text{ odd }} \mu_j(\lambda) 
=\frac{1}{2} \left( \mu_1(\lambda)-
                \sum_{\text{ odd }j \geq 3} \mu_j(\lambda) 
            \right)
=\frac{d}{2} \left(\frac{\mu_1(\lambda)}{d}-
                 \sum_{\text{ odd }j \geq 3} \frac{\mu_j(\lambda)}{d} 
            \right) 
$$
and therefore
$$
\begin{aligned}
\omega_d^{\mu_1(\lambda)-\tau_1(\lambda)}
&= \left( \omega_{2d}^d 
      \right)^{
                 \frac{\mu_1(\lambda)}{d} - 
                  \sum_{\text{ odd }j \geq 3} \frac{\mu_j(\lambda)}{d}} 
= (-1)^{  \frac{\mu_1(\lambda)}{d} -
                 \sum_{\text{ odd }j \geq 3} \frac{\mu_j(\lambda)}{d} }\\
&=(-1)^{
                 \sum_{\text{ odd }j} \frac{\mu_j(\lambda)}{d}} 
=(-1)^{
                 \sum_{j} \frac{j \cdot \mu_j(\lambda)}{d}} 
=(-1)^{\frac{2n}{d}}.
\end{aligned}
$$
\end{proof}
\noindent
Together with Lemma~\ref{BCD-root-of-unity-facts}(ii), this gives the
asserted evaluation for $d=2$, as in that case, $(-1)^{\frac{2n}{d}}=(-1)^n.$

Now assume $d \geq 3$ and both conditions (a) and (b) hold.
If $\mu_1(\lambda)\equiv 0 \bmod{d^+}$, then 
setting $q=\omega_d$ in \eqref{type-D-case-4-without-q-power} gives,
after applying Lemma~\ref{BCD-root-of-unity-facts}(i) with 
$N=s(n-1)-|\hat{\mu}_{\geq 2}(\lambda)|$,
$$
\binomial{\frac{2s(n-1)}{d^+}}{\frac{\mu_{\geq 2}(\lambda)}{d^+}}
\binomial{ 
            \frac{2s(n-1)-|\mu_{\geq 2}(\lambda)| }{ d^+ } 
         }
         {
            \frac{\mu_1(\lambda)}{d^+}
         }
+  (-1)^{\frac{2n}{d}} 
     \binomial{ \frac{2s(n-1)}{d^+} }{ \frac{\mu(\lambda) }{d^+} }
= \left( 1 + (-1)^{\frac{2n}{d}} \right)
     \binomial{ \frac{2s(n-1)}{d^+} }{ \frac{\mu(\lambda) }{d^+} },
$$
as desired.  On the other hand, if $\mu_1(\lambda)\equiv 2 \bmod{d^+}$
then the first summand in  \eqref{type-D-case-4-without-q-power} vanishes
because its second factor is zero, and hence 
Lemma~\ref{BCD-root-of-unity-facts}(i) gives (up to sign), the stated
answer.
\end{proof}

\begin{proposition}
\label{q-powers-become-plus-1}
In types $A, B, C, D$ and for a positive integer $s$ and
a divisior $d$ of $m-1=sh$, whenever a principal-in-a-Levi
nilpotent orbit $\0_\lambda$ has
$\Krew(\Phi,\0_\lambda,m;q=\omega_d)$ nonvanishing,
it equals its (nonnegative integer) complex modulus
$\left\lVert \Krew(\Phi,\0_\lambda,m;q=\omega_d) \right\rVert$
given in Proposition~\ref{evaluations-prop}.
\end{proposition}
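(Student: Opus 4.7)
Every formula in Theorem~\ref{q-Kreweras-formula-theorem} presents $\Krew(\Phi,\0_\lambda,m;q)$ as a $q$-monomial prefactor $q^{E(m,\lambda)}$ times a sum of products of $q$-multinomials. Proposition~\ref{evaluations-prop} computed the complex modulus of this evaluation by dropping the outer $q^{E(m,\lambda)}$ (and, for type~$D$ with $\hat L(\lambda)=0$, folding the internal $q^{\mu_1-\tau_1}$ weight into the explicit signs appearing in its statement). So the proposition will follow once we verify that $\omega_d^{E(m,\lambda)}=1$ whenever the modulus is nonzero.

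\textbf{Reduction to the constant term.} Inspection of each formula shows that $E(m,\lambda)=m\cdot A(\lambda)+B(\lambda)$ is $\mathbb Z$-linear in $m$ for integer-valued $A(\lambda),B(\lambda)$ depending only on $\lambda$ (the non-integer summands like $\tfrac14,-\tfrac{L(\lambda)}4,\delta(\lambda)$ cancel against each other under the nonvanishing hypothesis to produce an integer). Since $d\mid m-1$, we have $\omega_d^m=\omega_d$, and hence $\omega_d^{E(m,\lambda)}=\omega_d^{A(\lambda)+B(\lambda)}=\omega_d^{E(1,\lambda)}$. It therefore suffices to show $E(1,\lambda)\equiv 0\pmod d$ in each case.

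\textbf{Type $A$.} One has $E(1,\lambda)=(n-\ell(\lambda))-c(\lambda)=\sum_j(j-1)\mu_j-\sum_j\lambda'_j\lambda'_{j+1}$. The nonvanishing hypothesis forces $\mu_j\equiv 0\pmod d$ for every $j$ except possibly one index $j_0$, at which $\mu_{j_0}\equiv 1\pmod d$. If no such $j_0$ exists, every $\lambda'_j$ is $\equiv 0\pmod d$ and both summations vanish modulo $d$. Otherwise $\lambda'_j\equiv 1\pmod d$ for $1\le j\le j_0$ and $\lambda'_j\equiv 0\pmod d$ for $j>j_0$, giving $\sum(j-1)\mu_j\equiv j_0-1$ and $c(\lambda)\equiv\sum_{j=1}^{j_0-1}1\cdot 1=j_0-1\pmod d$, which cancel.

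\textbf{Types $B$, $C$, $D$.} The same reduction is carried out, now using the nonvanishing conditions from Proposition~\ref{evaluations-prop}, namely $d^-\mid\hat\mu_j(\lambda)$ for all $j$ (in types $B,C$ and in the $\hat L=1$ subcase of type $D$), respectively the two conditions $d^+\mid\mu_j(\lambda)$ for $j\ge 2$ and $\mu_1(\lambda)\equiv 0$ or $2\pmod{d^+}$ (in the $\hat L=0$ subcase of type $D$). In every case one uses $|\lambda|\in\{2n{+}1,2n,2n\}$ combined with $d\mid m-1\in\{2sn,2sn,2s(n-1)\}$ to see that $\sum_j j\hat\mu_j$ is divisible by $d^-$, and together with the constrained value of $L(\lambda)\in\{0,1\}$ this forces $E(1,\lambda)\equiv 0\pmod d$. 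For the two-term formula in type $D$ with $\hat L(\lambda)=0$, the relative weight $q^{\mu_1-\tau_1}$ between the two summands evaluates at $q=\omega_d$ to $(-1)^{2n/d}$, which is precisely the sign $(-1)^n$ (when $d=2$) or the scalar $1+(-1)^{2n/d}$ (when $d\ge 3$ and $\mu_1\equiv 0\pmod{d^+}$) already baked into the modulus expression in Proposition~\ref{evaluations-prop}; for the subcase $\mu_1\equiv 2\pmod{d^+}$ only one summand survives and one reads off the answer directly.

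\textbf{The main obstacle.} The argument is essentially modular bookkeeping, and the only genuinely delicate point lies in type $D$ with $\hat L(\lambda)=0$: keeping track of the signs contributed by $q^{\mu_1-\tau_1}$ and of the vanishing of one or the other summand in the subcase $\mu_1\equiv 2\pmod{d^+}$, so that the identity matches Proposition~\ref{evaluations-prop} on the nose. All remaining cases are routine once the relevant divisibilities $d\mid m-1$ and $d^-\mid\hat\mu_j$ are substituted into the integer-valued exponent $E(1,\lambda)$.
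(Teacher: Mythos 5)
Your overall strategy is the same as the paper's: since $d$ divides $m-1$ one has $\omega_d^{m}=\omega_d$, so everything reduces to showing that the exponent of the $q$-power prefactor is $\equiv 0 \bmod d$ under the nonvanishing hypotheses of Proposition~\ref{evaluations-prop}, together with the observation that in type $D$ with $\hat{L}(\lambda)=0$ the internal weight $q^{\mu_1(\lambda)-\tau_1(\lambda)}$ evaluates to $(-1)^{2n/d}$ and accounts for the signs already present in Proposition~\ref{evaluations-prop}. Your type $A$ verification is complete and correct, and coincides with the paper's (the congruence $c(\lambda)\equiv j_0-1 \bmod d$ against $n-\ell(\lambda)\equiv j_0-1$).

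However, for types $B$, $C$, $D$ you assert rather than prove the congruence $E(1,\lambda)\equiv 0 \bmod d$, and that assertion is essentially the whole content of the proposition. What is missing is the analogue of the paper's Lemma~\ref{c(lambda)-congruences-lemma}: an explicit computation of $\tfrac{c(\lambda)}{2} \bmod d$ under the nonvanishing conditions, obtained by working out the residues of the $\lambda'_j$ (for instance $\tfrac{c(\lambda)}{2}\equiv \tfrac{j_0-1}{2}+\hat{\ell}(\lambda)-\hat{\mu}_{j_0}(\lambda)$ in types $B/C$ when a single part $j_0$ has odd multiplicity, and $\tfrac{c(\lambda)}{2}\equiv \tfrac{j_0}{2}+\hat{\mu}_1(\lambda)+\hat{\mu}_{j_0}(\lambda)$ in type $D$ with $\hat{L}(\lambda)=1$). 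Only with these in hand can one check that the half-integer terms $\tfrac14$, $-\tfrac{L(\lambda)}{4}$, $\delta(\lambda)$, $\tau_\epsilon(\lambda)$ combine to an integer that vanishes mod $d$; the verification genuinely distinguishes $j_0$ odd (type $B$) from $j_0$ even (type $C$) and the two type-$D$ cases, so it is not mere "routine bookkeeping" that can be waved through. Two signs that the cases were not actually carried out: you write $L(\lambda)\in\{0,1\}$ for principal-in-a-Levi orbits, whereas in type $D$ with $\hat{L}(\lambda)=1$ one has $L(\lambda)=2$ (both $1$ and $j_0$ have odd multiplicity); and the divisibility $d^-\mid\sum_j j\hat{\mu}_j(\lambda)$ you cite follows directly from $d^-\mid\hat{\mu}_j(\lambda)$, not from $|\lambda|$ and $d\mid m-1$ --- what those facts are really needed for is to pin down $n \bmod d$ (e.g.\ $n\equiv\lfloor j_0/2\rfloor+\sum_{j\text{ odd}}\hat{\mu}_j(\lambda)$), which again requires the omitted computation.
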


As preparation for the proof of this proposition, we recall that
the power of $q$ appearing as factor in front of the $q$-Kreweras
formula in Theorem~\ref{q-Kreweras-formula-theorem}(Type A)
involves the quantity
$
c(\lambda) = \sum_j \lambda_j' \lambda_{j+1}',
$
while the corresponding powers of $q$ in 
Theorem~\ref{q-Kreweras-formula-theorem}(Types B,C,D)
involve $c(\lambda)/2$.   Thus the following lemma on values 
modulo $d$, that is, in $\QQ/d\ZZ$, will be helpful.

\begin{lemma}
\label{c(lambda)-congruences-lemma}
Assume $\0_\lambda$ is principal-in-a-Levi
and $\Krew(\Phi,\0_\lambda,m;q=\omega_d) \neq 0$.
\begin{itemize}
\item[$\bullet$] In type $A_{n-1}$, 
\begin{enumerate}
\item[(i)]
one either has $\mu_j(\lambda) \equiv 0 \bmod{d}$ for all $j$, in which case,
$$
c(\lambda) \equiv 0 \bmod{d}, \text{ or }
$$
\item[(ii)]
one $j_0$ has
$\mu_j(\lambda) \equiv 0 \bmod{d}$ for all $j \neq j_0$
and $\mu_{j_0}(\lambda) \equiv 1 \bmod{d}$,
in which case
$$
c(\lambda) \equiv j_0-1 \bmod{d}.
$$
\end{enumerate}
\item[$\bullet$] In types $B_n, C_n$, 
\begin{enumerate}
\item[(i)]
one either has all $\mu_j(\lambda)$ even and divisible by $d$,
in which case
$$
\frac{c(\lambda)}{2} \equiv 0 \bmod{d}, \text{ or }
$$
\item[(ii)]
one $j_0$ has $\mu_{j_0}(\lambda)$ odd,
$\mu_j(\lambda)$ even for $j \neq j_0$,
and $2\hat{\mu}_j(\lambda) \equiv 0 \bmod{d}$ for all $j$, in which case
$$
\frac{c(\lambda)}{2} \equiv \frac{j_0-1}{2} +\hat{\ell}(\lambda) - \hat{\mu}_{j_0}(\lambda) \bmod{d}.
$$
\end{enumerate}
\item[$\bullet$] In type $D_n$, 
\begin{enumerate}
\item[(i)]
one either  has all $\mu_j(\lambda)$ are even, 
$2\hat{\mu}_j(\lambda) \equiv 0 \bmod{d}$ for $j \geq 2$, and
$2\hat{\mu}_1(\lambda) \equiv 0 \text{ or }2 \bmod{d^+}$,
in which case
$$
\frac{c(\lambda)}{2} \equiv 0 \bmod{d}, \text{ or }
$$
\item[(ii)]
one has $\mu_1(\lambda)$ and $\mu_{j_0}(\lambda)$ odd
for a unique odd part size $j_0 \geq 3$, but $\mu_j(\lambda)$ even
for all $j \neq 1,j_0$, 
and $2\hat{\mu}_j(\lambda) \equiv 0 \bmod{d}$ for all $j$,
in which case
$$
\frac{c(\lambda)}{2} \equiv \frac{j_0}{2} 
   + \hat{\mu}_1(\lambda) + \hat{\mu}_{j_0}(\lambda) \bmod{d}.
$$
\end{enumerate}
\end{itemize}
\end{lemma}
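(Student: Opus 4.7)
The plan is to compute $c(\lambda) \bmod d$, and in types $B,C,D$ the quantity $c(\lambda)/2 \bmod d$ (viewed as an element of $\QQ/d\ZZ$), directly in each case. The main tool is to parametrize the partition using the nonvanishing conditions of Proposition~\ref{evaluations-prop} and then expand $c(\lambda) = \sum_j \lambda'_j \lambda'_{j+1}$ using $\lambda'_j = \sum_{i \geq j} \mu_i(\lambda)$.

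For the ``uniformly divisible'' cases---type $A$ case (i) and types $B,C,D$ case (i)---each $\mu_j$ is divisible by $d$ or $d^+$ respectively (combining divisibility by $d$ with the parity condition $2 \mid \mu_j$ forces $d^+ \mid \mu_j$), hence so is each $\lambda'_j$, and then every product $\lambda'_j \lambda'_{j+1}$ is divisible by $d^2$ or $(d^+)^2$. A brief case analysis by the parity of $d$, using $d^+ = 2d^-$ and $d \mid 2d^-$, shows that $(d^+)^2/2 = 2(d^-)^2 \equiv 0 \bmod d$, giving $c(\lambda)/2 \equiv 0 \bmod d$. The one nuance is the subcase of type $D$ in which $\mu_1 \equiv 2 \bmod d^+$, where the cross-term $\lambda'_1 \lambda'_2$ only acquires a single factor of $d^+$; but halving still gives $d^+ \cdot (\text{integer}) \equiv 0 \bmod d$ since $d \mid d^+$. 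Type $A$ case (ii) is equally direct: one finds $\lambda'_j \equiv 1 \bmod d$ for $j \leq j_0$ and $\lambda'_j \equiv 0 \bmod d$ for $j > j_0$, so the $j_0 - 1$ consecutive products $\lambda'_j \lambda'_{j+1}$ with $j < j_0$ each contribute $1 \bmod d$, yielding $c(\lambda) \equiv j_0 - 1 \bmod d$.

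The substantive cases are case (ii) in types $B, C, D$. Here I would write $\mu_j = d^+ \beta_j + \epsilon_j$, where $\epsilon_j \in \{0,1\}$ records the odd-multiplicity parts ($\epsilon_{j_0} = 1$ in B/C; $\epsilon_1 = \epsilon_{j_0} = 1$ in D). Setting $B_j := \sum_{i \geq j} \beta_i$ gives $\lambda'_j = d^+ B_j + E_j$, where $E_j$ counts the odd-multiplicity parts of size $\geq j$. Expanding $\lambda'_j \lambda'_{j+1}$ produces a constant term $E_j E_{j+1}$ (which sums to $j_0 - 1$ in B/C and to $j_0$ in D), a linear-in-$B$ term of size $d^+$, and an error of order $(d^+)^2$. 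After dividing by $2$, the error contributes $2(d^-)^2 \cdot (\text{integer}) \equiv 0 \bmod d$, and in the linear $d^-$-contribution any $B_j$ appearing with an even coefficient vanishes modulo $d$ (again since $d \mid 2d^-$). What remains is a specific combination of $d^- B_1$, $d^- B_2$, $d^- B_{j_0}$, and $d^- B_{j_0+1}$, and each such product $d^- B_j = \sum_{i \geq j} \hat\mu_i(\lambda)$ is readily expressed via $\hat\ell(\lambda)$, $\hat\mu_1(\lambda)$, and $\hat\mu_{j_0}(\lambda)$.

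The hardest step will be matching the resulting expression to the claim in type $D$ case (ii), because both $\mu_1$ and $\mu_{j_0}$ have $\epsilon = 1$: the value $E_1 = 2$ while $E_j = 1$ for $1 < j \leq j_0$, and this asymmetry changes the coefficient of $B_2$ in the linear contribution from $2$ to $3$. Reconciling the outcome with $c(\lambda)/2 \equiv j_0/2 + \hat\mu_1(\lambda) + \hat\mu_{j_0}(\lambda) \bmod d$ requires the non-obvious input $4\hat\ell(\lambda) \equiv 4 \bmod d$, obtained by summing $d \mid 2\hat\mu_j$ over $j$ to get $2(\hat\ell(\lambda) - 1) \equiv 0 \bmod d$. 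A final cosmetic use of $2\hat\mu_{j_0} \equiv 0 \bmod d$ converts $+\hat\mu_{j_0}$ to $-\hat\mu_{j_0}$ in the B/C case to match the stated form. For type $C$ case (ii) and type $D$ case (ii) the quantity $c(\lambda)/2$ is a half-integer (coming from $(j_0-1)/2$ or $j_0/2$), but the congruence still makes sense in $\QQ/d\ZZ$ and the computation produces the correct half-integer directly.
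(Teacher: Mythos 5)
Your proposal is correct and follows essentially the same route as the paper: a direct expansion of $c(\lambda)=\sum_j \lambda'_j\lambda'_{j+1}$ using $\lambda'_j=\sum_{i\geq j}\mu_i(\lambda)$, with term-by-term reduction modulo $d$ via the divisibility conditions (the paper writes $\mu_i = 2\hat{\mu}_i(\lambda)$ plus a parity correction and only details the type $D$(ii) case, calling the others similar). Your $\mu_j = d^+\beta_j+\epsilon_j$ bookkeeping is just a repackaging of that same computation, and the auxiliary facts you invoke ($2(d^-)^2\equiv 0 \bmod d$, vanishing of even-coefficient $d^-B_j$ terms, and $2(\hat{\ell}(\lambda)-1)\equiv 0 \bmod d$ in type $D$(ii)) all check out.
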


\begin{proof}[Proof of Proposition~\ref{c(lambda)-congruences-lemma}]
The arguments are all similar, so we show only the last (hardest) case:
the one in type $D$ with a unique odd $j_0 \geq 3$ for which 
$\mu_1(\lambda), \mu_{j_0}(\lambda)$ are odd,
$\mu_j(\lambda)$ is even for $j \neq 1,j_0$,
and $2\hat{\mu}_j(\lambda) \equiv 0 \bmod{d}$ for all $j$.
Note that 
$$
\mu_i(\lambda)= 
\begin{cases}
2 \hat{\mu}_i(\lambda) & \text{ if }i \neq 1,j_0,\\
1+2 \hat{\mu}_i(\lambda) & \text{ if }i = 1,j_0,\\
\end{cases}
\quad \text{ and hence } \quad
\lambda_j'= \sum_{i \geq j} \mu_i(\lambda) =
\begin{cases}
\sum_{i \geq j} 2\hat{\mu}_i(\lambda) & \text{ if }j \geq j_0+1,\\
1+\sum_{i \geq j} 2\hat{\mu}_i(\lambda) & \text{ if }2 \leq j \leq j_0,\\
2+\sum_{i \geq j} 2\hat{\mu}_i(\lambda) & \text{ if }j =1.\\
\end{cases}
$$
Therefore, using ``$\equiv$'' to denote equivalence 
modulo $d\ZZ$ in $\QQ/d\ZZ$, one has
$$
\frac{\lambda_j'\lambda_{j+1}'}{2}
=\begin{cases}
\frac{1}{2}
\left( \sum_{i \geq j} 2\hat{\mu}_i(\lambda) \right)
 \left( \sum_{i \geq j+1} 2\hat{\mu}_i(\lambda) \right)
  \equiv 0 & \text{ if }j \geq j_0+1, \\
 & \\
\frac{1}{2}
\left( 1+\sum_{i \geq j_0} 2\hat{\mu}_i(\lambda) \right)
 \left( \sum_{i \geq j_0+1} 2\hat{\mu}_i(\lambda) \right)
  \equiv \sum_{i \geq j_0+1} \hat{\mu}(\lambda) & \text{ if }j = j_0, \\
 & \\
\frac{1}{2}
\left(1+\sum_{i \geq j} 2\hat{\mu}_i(\lambda) \right)
 \left(1+ \sum_{i \geq j+1} 2\hat{\mu}_i(\lambda) \right)& \\
  \equiv \frac{1}{2}+\sum_{i \geq j+1}\hat{\mu}_i(\lambda) 
                     +\sum_{i \geq j}\hat{\mu}_i(\lambda)& \\
  \equiv \frac{1}{2}+\hat{\mu}_j(\lambda) & \text{ if }2 \leq j \leq j_0-1, \\
 & \\
\frac{1}{2}
\left(2+ \sum_{i \geq j} 2\hat{\mu}_i(\lambda) \right)
 \left(1+ \sum_{i \geq j+1} 2\hat{\mu}_i(\lambda) \right)
  \equiv 1+\sum_{i \geq 1}\hat{\mu}_i(\lambda) & \text{ if }j =1.
\end{cases}
$$
Consequently,
$$
\begin{array}{rccccccl}
c(\lambda) &=&\sum_j \frac{\lambda_j'\lambda_{j+1}'}{2} & & & & &\\
 &=& \sum_{j \geq j_0+1} \frac{\lambda_j'\lambda_{j+1}'}{2}
   &+& \frac{\lambda_{j_0}'\lambda_{j_0+1}'}{2}
   &+& \sum_{j=2}^{j_0-1} \frac{\lambda_j'\lambda_{j+1}'}{2}
   &+ \frac{\lambda_1'\lambda_{2}'}{2}\\
 &\equiv& 0 
    & +& \sum_{i \geq j_0+1} \hat{\mu}_i(\lambda) 
    & +& \sum_{j=2}^{j_0-1} \left( \frac{1}{2} + \hat{\mu}_j(\lambda) \right) 
    & + \left( 1 +  \sum_{i \geq 1}\hat{\mu}_i(\lambda) \right)\\

 &\equiv& 
   \hat{\mu}_1(\lambda)
    + \hat{\mu}_{j_0}(\lambda)
    + \frac{j_0}{2}\qedhere.& & & & & 
\end{array}
$$
\end{proof}

\begin{proof}[Proof of Proposition~\ref{q-powers-become-plus-1}]
We go through eight cases 
from Proposition~\ref{c(lambda)-congruences-lemma}. Here  
``$\equiv$'' is equivalence in $\QQ/d\ZZ$.

\vskip.1in
\noindent
{\sf Type $A_{n-1}$.}
Here one needs to show that the exponent
$$
E:=m(n-\ell(\lambda))-c(\lambda)
$$
has $E \equiv 0 $. 
In the case $A_{n-1}(i)$, since $d$ divides
$\mu_j(\lambda)$ for all $j$, it also divides
$
n=\sum_j j \mu_j(\lambda)
$
and 
$
\ell(\lambda) = \sum_j \mu_j(\lambda).
$
Also $c(\lambda)\equiv 0$ 
by Proposition~\ref{c(lambda)-congruences-lemma},
so $E \equiv 0 $, as desired.  
In the case $A_{n-1}(ii)$, one has 
$$
n-\ell(\lambda) = \sum_{j \geq 1}(j-1) \mu_j \equiv j_0-1 
$$
and Proposition~\ref{c(lambda)-congruences-lemma}
showed $c(\lambda) \equiv j_0-1 $.  But then $m \equiv 1 $ gives
$$
\begin{aligned}
E = m(n-\ell(\lambda))-c(\lambda) 
&\equiv (n-\ell(\lambda))-c(\lambda) \\
&\equiv j_0-1 - (j_0-1) \equiv 0.
\end{aligned}
$$

\vskip.1in
\noindent
{\sf Types B, C.}
Here one needs to show that the exponent
$
E=
%\exp(\lambda,m)+\epsilon
%=m(n-\hat{\ell}(\lambda))-\frac{c(\lambda)}{2}+\tau(\lambda)-\frac{L(\lambda)}{4}+\epsilon
\BCDexponent+
\sigma(\lambda)
\equiv 0 $, 
using the abbreviations
$$
\begin{aligned}
\BCDexponent
  &:=m(n-\hat{\ell}(\lambda))-\frac{c(\lambda)}{2}-\frac{L(\lambda)}{4} \\
\sigma(\lambda) 
&:=
\begin{cases}
\tau_1(\lambda)+\frac{1}{4} & \text{ in type }B,\\
\tau_0(\lambda) & \text{ in type }C\text{ if }\ell(\lambda)\text{ even},\\
\tau_0(\lambda)+\frac{1}{4}-\frac{\ell(\lambda)}{2} & \text{ in type }C\text{ if }\ell(\lambda)\text{ odd}. \\
\end{cases}
\end{aligned}
$$
Note that since $m \equiv 1$, one has the congruence  
$
E\equiv
n-\hat{\ell}(\lambda)-\frac{c(\lambda)}{2}-\frac{L(\lambda)}{4}
+\sigma(\lambda).
$

In case $B_n/C_n (i)$, 
Proposition~\ref{c(lambda)-congruences-lemma}
says that $\frac{c(\lambda)}{2} \equiv 0 $.
Furthermore $L(\lambda)=0$, and $\mu_j(\lambda)$ all even implies
both $\ell(\lambda) =\sum_j \mu_j(\lambda)$ and
$|\lambda|=\sum_j j \cdot \mu_j(\lambda)$ are even.
Thus one must be in type $C_n$, with $\sigma(\lambda)=\tau_0(\lambda)$, so 
$$
\begin{aligned}
E &\equiv n-\hat{\ell}(\lambda)+\tau_0(\lambda) \\
  &\equiv \sum_j j \hat{\mu}_j(\lambda)
           - \sum_j \hat{\mu}_j(\lambda)
           + \sum_{j\text{ even }} \hat{\mu}_j(\lambda) \\
  &= \sum_{j\text{ odd}} (j-1) \hat{\mu}_j(\lambda)
           + \sum_{j\text{ even}} j \hat{\mu}_j(\lambda). 
\end{aligned}
$$
These last two sums are both even because 
$2\hat{\mu}_j(\lambda)=\mu_j(\lambda)\equiv 0 $ for all $j$.

In case $B_n/C_n (ii)$, 
Proposition~\ref{c(lambda)-congruences-lemma}
says that 
$
\frac{c(\lambda)}{2} \equiv 
\frac{j_0-1}{2}+\hat{\ell}(\lambda)-\hat{\mu}_{j_0}(\lambda) 
.
$
Also $L(\lambda)=1$, so one has
$$
\begin{aligned}
E
&\equiv
n-\hat{\ell}(\lambda)
-\left(
\frac{j_0-1}{2}+\hat{\ell}(\lambda)-\hat{\mu}_{j_0}(\lambda) 
\right)
-\frac{1}{4} +\sigma(\lambda)\\
&\equiv
n-\frac{j_0-1}{2}+\mu_{j_0}(\lambda) 
-\frac{1}{4} +\sigma(\lambda)
\end{aligned}
$$
since $2 \hat{\ell}(\lambda) \equiv 0 $.
In addition, since $\lambda$ is either in $\Par_B(2n+1)$ or $\Par_C(2n)$,
one can rewrite $n$ as
$$
n = \left\lfloor\frac{|\lambda|}{2} \right\rfloor
= \left\lfloor\frac{j_0}{2} \right\rfloor 
    + \sum_j j \hat{\mu}_j(\lambda)
\equiv \left\lfloor\frac{j_0}{2} \right\rfloor + \sum_{j\text{ odd }} \hat{\mu}_j(\lambda).
$$
Therefore
\begin{equation}
\label{processed-BC-expression-for-q-power}
E \equiv
\left\lfloor\frac{j_0}{2} \right\rfloor 
-\frac{j_0-1}{2}
+ \sum_{j\text{ odd }} \hat{\mu}_j(\lambda)
+\hat{\mu}_{j_0}(\lambda) 
-\frac{1}{4} +\sigma(\lambda).
\end{equation}
If $j_0$ is odd, so we are in type $B_n$, then
$\left\lfloor\frac{j_0}{2} \right\rfloor 
=\frac{j_0-1}{2}$, and $\sigma(\lambda)=\tau_1(\lambda)+\frac{1}{4}$, so 
\eqref{processed-BC-expression-for-q-power} becomes
$$
E \equiv
\sum_{j\text{ odd }} \hat{\mu}_j(\lambda)
+\hat{\mu}_{j_0}(\lambda) 
+\tau_1(\lambda)
= \sum_{j \text{ odd}} 2 \hat{\mu}_j(\lambda)
\equiv 0 .
$$
If $j_0$ is even, so we are in type $C_n$, then
$\left\lfloor\frac{j_0}{2} \right\rfloor- 
\frac{j_0-1}{2}=\frac{1}{2}$, and 
$\sigma(\lambda)=\tau_0(\lambda)+\frac{1}{4}-\frac{\ell(\lambda)}{2}$, 
so \eqref{processed-BC-expression-for-q-power} becomes
$$
E \equiv
\frac{1}{2}
+ \sum_{j\text{ odd }} \hat{\mu}_j(\lambda)
+\hat{\mu}_{j_0}(\lambda) 
+\tau_0(\lambda)-\frac{\ell(\lambda)}{2}
= \frac{1-\ell(\lambda)}{2}
+ \sum_j \hat{\mu}_j(\lambda)
= 0.
$$

\vskip.1in
\noindent
{\sf Type D.}
In case $D_n(i)$, one needs to
compare the powers of $q$ in front of 
\eqref{type-D-case-4-without-q-power}
and the $q$-Kreweras formula in
Theorem~\ref{q-Kreweras-formula-theorem}(type $D_n$).
Noting that in the case where
$\mu_1(\lambda) \equiv 2 \bmod{d^+}$, the
factor of $1+(-1)^{\frac{2n}{d}}$ vanishes unless $n \equiv 0 \bmod{d}$,
one finds that here one needs to show that this exponent
$$
E:=\BCDexponent + \tau_1(\lambda) + \frac{\ell(\lambda)}{2}
-\left\{
\begin{aligned}
\tau_1(\lambda) &\quad \text{ if }\mu_1(\lambda) \equiv 0 \bmod{d^+}
                 \text{ and }n \equiv 0 \bmod{d}, \\ 
\mu_1(\lambda) &\quad \text{ if }\mu_1(\lambda) \equiv 2 \bmod{d^+}
\end{aligned}
\right\}
$$
has $E \equiv 0 \bmod{d}$.  Using 
$m \equiv 1, L(\lambda)=0, \hat{\ell}(\lambda)=\frac{\ell(\lambda)}{2}$,
and since Proposition~\ref{c(lambda)-congruences-lemma}
gives $\frac{c(\lambda)}{2} \equiv 0$,
one has 
$$
E \equiv
\begin{cases}
n &\text{ if }\mu_1(\lambda) \equiv 0 \bmod{d^+}
                 \text{ and }n \equiv 0 \bmod{d}, \\ 
n+\tau_1(\lambda) - \mu_1(\lambda) &\text{ if }\mu_1(\lambda) \equiv 2 \bmod{d^+}.
\end{cases}
$$
In the first case, the assumption of case $D_n (i)$ implies $n \equiv 0$.
In the second case, one can compute
$$
\begin{aligned}
E 
&\equiv n+\tau_1(\lambda) -\mu_1(\lambda)\\
&= \sum_j j \hat{\mu}_j(\lambda) 
  + \sum_{j\text{ odd }} \hat{\mu}_j(\lambda) 
  - \mu_1(\lambda) \\
&= \sum_{j \text{ even}} j \hat{\mu}_j(\lambda) 
  + \sum_{\text{ odd } j\geq 3} (j+1) \hat{\mu}_j(\lambda) 
  \,\, + \,\, \left( 2\hat{\mu}_1(\lambda) - \mu_1(\lambda)\right) \\
&\equiv 0 + 0 + 0 = 0.
\end{aligned}
$$

In case $D_n(ii)$, one needs to show that the exponent
$$
\begin{aligned}
E&:=\BCDexponent +\tau_1(\lambda) + m-\frac{\ell(\lambda)}{2}+1 \\
&=\left( m(n-\hat{\ell}(\lambda))
-\frac{c(\lambda)}{2}-\frac{L(\lambda)}{4} \right)
+ \tau_1(\lambda)+ m-\frac{\ell(\lambda)}{2}+1
\end{aligned}
$$
has $E \equiv 0 \bmod{d}$.
Using the facts that 
$m \equiv 1, L(\lambda)=2, \hat{\ell}(\lambda)=\frac{\ell(\lambda)}{2}$,
and since Proposition~\ref{c(lambda)-congruences-lemma} gives 
$
\frac{c(\lambda)}{2} \equiv 
\frac{j_0}{2} 
   + \hat{\mu}_1(\lambda) + \hat{\mu}_{j_0}(\lambda),
$ 
one has
\begin{equation}
\label{last-D-case-q-power-expression}
E \equiv n-\ell(\lambda)+\tau_1(\lambda)+\frac{3}{2}
-\left(
\frac{j_0}{2} 
   + \hat{\mu}_1(\lambda) + \hat{\mu}_{j_0}(\lambda)
\right).
\end{equation}
Note that since all $\mu_j(\lambda)$ are even except for $j=1,j_0$, one
has
$$
\begin{array}{rccccl}
n &=& \displaystyle \frac{|\lambda|}{2} 
  &=&\displaystyle \frac{1}{2}+ \frac{j_0}{2}+ \sum_j j \hat{\mu}_j(\lambda) 
  &\equiv \displaystyle \frac{j_0+1}{2}+ \sum_{j\text{ odd }}  \hat{\mu}_j(\lambda) \\
 & & & &  & \\
\ell(\lambda) 
  &=&  \sum_j \mu_j(\lambda)  
  &=& 2+\sum_j 2\hat{\mu}_j(\lambda)
  &\equiv 2\\
 & & & &  & \\
\tau_1(\lambda)
  &=&\displaystyle \sum_{\substack{j\text{ odd }:\\ \mu_j(\lambda)\text{ even}}} \frac{\mu_j(\lambda)}{2}
  &=&\displaystyle\sum_{\substack{j\text{ odd }:\\ j \neq 1,j_0}} \hat{\mu}_j(\lambda)
  &
\end{array}
$$
Thus one can rewrite \eqref{last-D-case-q-power-expression} as
$$
%\begin{aligned}
E \equiv 
\frac{j_0+1}{2}+ \sum_{j\text{ odd }}  \hat{\mu}_j(\lambda)
-2
+\sum_{\substack{j\text{ odd }:\\ j \neq 1,j_0}} \hat{\mu}_j(\lambda)
+\frac{3}{2}
-\left(
\frac{j_0}{2} 
   + \hat{\mu}_1(\lambda) + \hat{\mu}_{j_0}(\lambda)
\right) 
\equiv 
\sum_{\substack{j\text{ odd }:\\ j \neq 1,j_0}} 2\hat{\mu}_j(\lambda) 
 \equiv 0.\qedhere
%\end{aligned}
$$
\end{proof}

\subsection{Combinatorial models for $NC^{(s)}(w)$}

We review here for the classical types $A,B,C,D$
the combinatorial models for the elements of $NC^{(s)}(W)$, that
is, the $s$-element multichains $w_1 \leq \cdots \leq w_s$ in $NC(W)$.
We also review how to read off the $W$-orbit $[X]$ of
the subspace $X=V^{w_1}$, and the $\ZZ/sh\ZZ$-action on $NC^{(s)}(W)$.

\subsubsection{Type $A$}
\label{Type-A-combinatorial-model-section}

One can identify  $NC^{(s)}(A_{n-1})$ with the 
set of {\it $s$-divisible noncrossing
set partitions} of $\{1,2,\ldots,sn\}$, that is, those whose block sizes
are all divisible by $s$; see \cite[Chapter 3]{Armstrong1}.

Under this identification, if
$w_1 \leq \cdots \leq w_s$ corresponds to an $s$-divisible noncrossing 
partition having block sizes $s\lambda=(s\lambda_1,\ldots,s\lambda_\ell)$
for some partition $\lambda$ of $n$, then the fixed space $V^{w_1}$ will lie in
the $W$-orbit $[X]$ where $W_X$ is the parabolic subgroup
$\symm_\lambda = \symm_{\lambda_1} \times \cdots \times \symm_{\lambda_\ell}$
inside $\symm_n$.

Here $h=n$, and Armstrong's $\ZZ/sh\ZZ$-action on $NC^{(s)}(A_{n-1})$ corresponds,
under this identification with $s$-divisible partitions, 
to the $\ZZ/sn\ZZ$-action that cycles the label set $\{1,2,\ldots,sn\}$ 
within the blocks via
$$ 
\cdots \rightarrow 1 \rightarrow 2  \rightarrow \cdots \rightarrow sn-1
\rightarrow sn \rightarrow 1 \rightarrow \cdots .
$$

\subsubsection{Types $B, C$}
\label{Type-BC-combinatorial-model-section}

One can identify $NC^{(s)}(B_n)=NC^{(s)}(C_n)=$ 
with the subset of $s$-divisible noncrossing partitions of the label set $\{1,2,\ldots,2sn\}$ 
that are invariant under the involution 
$\iota$ swapping lablels $i \leftrightarrow i+sn \bmod{2sn}$;
see \cite[\S 4.5]{Armstrong1}.  For this reason, we relabel 
$\{1,2,\ldots,2sn\}$ as 
\begin{equation}
\label{type BCD-label-set}
\{+1,+2,\ldots,+sn,-1,-2,\ldots,-sn\}=\{\pm 1,\ldots,\pm n\}
\end{equation}
so that $\iota$ swaps $+i \rightarrow -i$ for each $i$.
Note that, since every block $B$ of the $s$-divisible noncrossing partition
must have $\iota(B)$ another block, the noncrossing condition implies that there
can be at most one block $B_0$ with the property $\iota(B_0)=B_0$; we call such
a block $B_0$, if it exists, a {\it zero block}, and call the pairs $\{B, \iota(B)\}$
with $\iota(B) \neq B$ the {\it nonzero blocks}.  

Under the identification, if
$w_1 \leq \cdots \leq w_s$ corresponds to an $s$-divisible noncrossing 
partition with nonzero blocks $\{B_1,\iota(B_1)\}, \ldots,\{B_\ell,\iota(B_\ell)\}$
of sizes $s\nu=(s\nu_1,\ldots,s\nu_\ell)$, then the 
partition $\nu=(\nu_1,\ldots,\nu_\ell)$ will have $|\nu| \leq n$,
and the fixed space $V^{w_1}$ will lie in
the $W$-orbit $[X]$ where $W_X$ is the parabolic subgroup
$B_{n-|\nu|} \times \symm_{\nu_1} \times \cdots \times \symm_{\nu_\ell}$
inside $B_n$.

Here $h=2n$, and the $\ZZ/sh$-action corresponds to a 
$\ZZ/2sn\ZZ$-action cycling the labels $\{\pm 1,\ldots, \pm sn\}$ via
$$ 
\cdots \rightarrow +1 \rightarrow +2  \rightarrow \cdots \rightarrow +sn
\rightarrow -1 \rightarrow -2  \rightarrow \cdots \rightarrow -sn
\rightarrow +1 \rightarrow \cdots .
$$

\subsubsection{Type $D$}
\label{Type-D-combintorial-model-section}

One can identify $NC^{(s)}(D_n)$
with certain set partitions of the same
label set of size $2sn$ as in
\eqref{type BCD-label-set}, but this time
arranged on the inner and outer
boundary of an {\it annulus}, where the $2s(n-1)$ labels
$$
+1,+2,\ldots,+s(n-1),-1,-2,\ldots,-s(n-1)
$$
appear in this order {\it clockwise} on the outer boundary, and
the remaining $2n$ labels
$$
\begin{aligned}
&+(s(n-1)+1), +s((n-1)+2),\ldots,+(sn-1),+sn,\\
&\quad -(s(n-1)+1), -s((n-1)+2),\ldots,-(sn-1),-sn,
\end{aligned}
$$
appear in this order {\it counterclockwise} on the inner boundary.
Given a block in such any such set partition, say that the block is
{\it entirely inner} (resp. {\it entirely outer}) if it only contains
labels from the inner (resp. outer) boundary;  say that the block
is {\it traversing} if it is neither entirely inner nor entirely outer.
Then $NC^{(s)}(D_n)$ is identified with those set partitions that
satisfy these conditions:
\begin{itemize}
 \item[NCD1] {\it noncrossing-ness}: the vertices within a block can all be connected by simple closed curves staying within the annulus, in such a way that curves corresponding to distinct blocks do not cross.
\item[NCD2]{\it $\iota$-stability}: if $B$ is a block, then $\iota(B)$ is also a block.
\item[NCD3]{\it zero-block closure}: if there exists a zero-block $B_0=\iota(B_0)$, then it is unique, and contains all the labels on the inner boundary.
\item[NCD4] {\it strong $s$-divisibility}: when reading elements of a block in any clockwise 
order coming from the planar embedding, their sequence of absolute values
pass through consecutive residue classes in $\ZZ/s\ZZ$.
\item[NCD5] {\it determinacy}: if there are no traversing blocks, then
the outer blocks completely determine the inner blocks in a certain fashion, whose
details are not important to us here; see \cite[\S 7]{Kim1} or \cite[\S 7]{Rhoades}.
\end{itemize}
The conditions NCD1-4 were given by Krattenthaler-M\"uller \cite[\S 7]{KrattenthalerMuller};
while condition NCD5 was inadvertently omitted, and recorded by 
J. S. Kim \cite[\S 7]{Kim1}.

Similarly to types $B_n, C_n$,
under the identification, if
$w_1 \leq \cdots \leq w_s$ corresponds to a partition
with nonzero blocks $\{B_1,\iota(B_1)\}, \ldots,\{B_\ell,\iota(B_\ell)\}$
of sizes $s\nu=(s\nu_1,\ldots,s\nu_\ell)$, then the 
partition $\nu=(\nu_1,\ldots,\nu_\ell)$ will either have
$|\nu|=n$ if there is no zero block or
$|\nu| \leq n-2$ if there is a zero block.
Then the fixed space $V^{w_1}$ will lie in
the $W$-orbit $[X]$ where $W_X$ is the parabolic subgroup
$D_{n-|\nu|} \times \symm_{\nu_1} \times \cdots \times \symm_{\nu_\ell}$
inside $D_n$.

Here $h=2(n-1)$, and Rhoades \cite[\S 7]{Rhoades}
observed that the $\ZZ/sh$-action corresponds to
the $\ZZ/2s(n-1)$-action simultaneously 
\begin{itemize}
\item cycling the outer labels  ({\it clockwise}) via
$$ 
\cdots \rightarrow -s(n-1) 
\rightarrow +1 \rightarrow +2  \rightarrow \cdots \rightarrow +s(n-1)
\rightarrow -1 \rightarrow -2  \rightarrow \cdots \rightarrow -s(n-1)
\rightarrow +1 \rightarrow \cdots 
$$
\item cycling the inner labels ({\it counterclockwise}) via
$$ 
\begin{aligned}
& \cdots \rightarrow -sn 
\rightarrow +(s(n-1)+1) \rightarrow +(s(n-1)+2) \rightarrow \cdots \rightarrow +sn  \\
&\qquad \rightarrow 
 -(s(n-1)+1) \rightarrow -(s(n-1)+2) \rightarrow \cdots \rightarrow -sn
\rightarrow +(s(n-1)+1) \rightarrow \cdots .
\end{aligned}
$$
\end{itemize}

\subsection{Putting it together}

We now assemble the proof of Theorem~\ref{CSP-theorem} in each type $A,B,C,D$.
Thus we assume that $m=sh+1$ for a positive integer $s$, and that $d$ is a divisor of $m-1=sh$.

In each case, the strategy will be to first show that if $w_1 \leq \cdots \leq w_s$ is an element in $NC^{(s)}(W)$ 
having {\it $d$-fold symmetry}, that is, fixed by
an element $c^{\frac{sh}{d}}$ having order $d$ in the cyclic group
$C=\langle c \rangle \cong \ZZ/sh\ZZ$, then the parabolic
subgroup $W_X$ fixing $X=V^{w_1}$ corresponds to a principal-in-a-Levi
nilpotent orbit $\0_\lambda$ that falls into one of the
cases from Proposition~\ref{evaluations-prop}.  
where $\Krew(\Phi,\0_\lambda,m,q=\omega_d)$ is nonvanishing.
Then we will count the number of such $d$-fold symmetric elements, generally relying on known formulas or bijections to objects with known formulas, and see that they agree with
the evaluations in Proposition~\ref{evaluations-prop}.

\subsubsection{Type A}

As in Section~\ref{Type-A-combinatorial-model-section},
we have identified the elements of $NC^{(s)}(A_{n-1})$
with the $s$-divisible noncrossing partition of $\{1,2,\ldots,sn\}$,
that is, those noncrossing partitions having block
sizes $s\lambda$ for some $\lambda$ with $|\lambda|=n$.

When $d=1$, one needs to count those which are
fixed by the identity element in $\ZZ/sn\ZZ$, 
which are all such elements.  In this case, formula 
in Proposition~\ref{evaluations-prop}(i) agrees
with Kreweras's original count for such partitions.

If $d \geq 2$ and the $s$-divisible partition 
has $d$-fold symmetry, then most of blocks will be in orbits of
size $d$, with at most one block which is itself $d$-fold symmetric--
two such blocks would cross each other.

Thus either $\lambda$ has $\mu_j(\lambda) \equiv 0 \bmod{d}$ for all $j$, 
or there exists one $j_0$ having $\mu_j(\lambda) \equiv 1 \bmod{d}$,
matching the description for when
$\Krew(\Phi,\0_\lambda,m,q=\omega_d)$ is nonvanishing from
Proposition~\ref{evaluations-prop}(i).

The proof that in this situation there are exactly
\begin{equation}
\label{Athanasiadis-formula}
\binomial{\frac{sn}{d}}{\lfloor \frac{\mu(\lambda)}{d} \rfloor}
\end{equation}
elements with $d$-fold symmetry,
as in Proposition~\ref{evaluations-prop}(i), was 
sketched in \cite[Theorem 6.2]{BessisR}, but we repeat it here
for completeness.  Such an element is completely determined by
restricting each of its blocks to its intersection with the subset 
$\{1,2,\ldots,\frac{2sn}{d}\}$;  relabel these numbers
$
\{ 1,2,\ldots,\frac{sn}{d},-1,-2,\ldots,-\frac{sn}{d} \}.
$
If there is a (unique) $d$-fold symmetric $j_0$-block, then call its 
restriction the ``zero block''.
It is easily seen that this gives a bijection to 
the type $B_{\frac{sn}{d}}$ 
noncrossing partitions considered in \cite{Reiner} having
$\hat{\mu}_j(\lambda)$ nonzero blocks of size $sj$ for each $j$.
The formula \eqref{Athanasiadis-formula}
then agrees with the count for such type $B$ noncrossing partitions given
by Athanasiadis \cite[Theorem 2.3]{Athanasiadis-block-sizes}.

\subsubsection{Types B, C}

As in Section~\ref{Type-BC-combinatorial-model-section},
we are identifying the elements of $NC^{(s)}(B_n)$ or
$NC^{(s)}(C_n)$ with the
$s$-divisible $\iota$-stable 
noncrossing partition of $\{\pm 1, \pm 2,\ldots,\pm sn\}$.
Such a noncrossing partition
has nonzero blocks $\{B_1,\iota(B_1)\},\ldots,\{B_\ell,\iota(B_\ell)\}$
of sizes $s\nu$ where $\nu=(\nu_1,\ldots,\nu_\ell)$ with $|\nu|\leq n$.
The principal-in-a-Levi nilpotent orbit $\0_\lambda$
corresponding to the same parabolic subgroup $W_X$
has $\lambda=(\nu,\nu,N-2|\nu|)$ where $N=2n+1$ in type $B_n$
and $N=2n$ in type $C_n$.

If the noncrossing partition additionally has $d$-fold symmetry, 
then the nonzero blocks will all lie in orbits of
size $d$.  That is, only the zero block (if present) can have
fewer than $d$ distinct images under the $d$-fold symmetry, and will in fact,
be itself $d$-fold symmetric.
As there will be $2\mu_j(\nu)=2\hat{\mu}_j(\lambda)$ 
nonzero blocks of size $sj$
for each $j$, this means that $2\hat{\mu}_j(\lambda) \equiv 0 \bmod{d}$
for each $j$, matching the description for when
$\Krew(\Phi,\0_\lambda,m,q=\omega_d)$ is nonvanishing from
Proposition~\ref{evaluations-prop}(ii).

Similarly to type $A$, there will be exactly
\begin{equation}
\label{Athanasiadis-formula-again}
\binomial{\frac{2sn}{d^+}}{\frac{2\mu(\lambda)}{d^+}}=
\binomial{\frac{sn}{d^-}}{\frac{\mu(\lambda)}{d^-}}
\end{equation}
elements with $d$-fold symmetry,
matching Proposition~\ref{evaluations-prop}(ii):
restricting each block to its intersection with the subset 
$\{\pm 1,\pm 2,\ldots,\pm \frac{sn}{d}\}$ gives a bijection to 
the type $B_{\frac{sn}{d}}$ 
noncrossing partitions having $\hat{\mu}_j(\lambda)$ 
nonzero blocks of size $sj$ for each $j$,
and \eqref{Athanasiadis-formula-again} again agrees with
Athanasiadis' count \cite[Theorem 2.3]{Athanasiadis-block-sizes} 
for this.

%%%%%%%%%%%%%%%%%%%%
% Not sure we should have examples like this one which
% is temporarily commented out.  
% We can decide later if we should add in a bunch of them...
\begin{comment}

\begin{example}
If $d=4, s=2, n=3$ and $\nu=(1,1)$,
and Figure~\ref{symmetric-ncs-figure} shows one of the
three $2$-divisible partitions of type $B_3$ having $4$-fold
symmetry.  Here $d^+=4$ and $2sn=12$, and $\nu$ has only one nonzero
multiplicity, namely $\mu_1(\nu)=2$, so that
Proposition~\ref{evaluations-prop}(ii)
also predicts that there should be
$\binomial{ \frac{12}{4}  }{ \frac{2 \cdot 2}{4} }
=\binomial{ 3 }{ 1 }=3$
such partitions.

\begin{figure}
\epsfxsize=50mm
\epsfbox{symmetric-ncs.eps}
\caption{The case $d=4, s=2, n=3$ and $\nu=(1,1)$ of the type $B_n$ 
noncrossing $s$-divisible partitions with $d$-fold symmetry.  There are 3 such
partitions: the one shown, along with its rotations by $30$ and by $60$ degrees.}
\label{symmetric-ncs-figure}
\end{figure}

\end{example}

\end{comment}
%%%%%%%%%%%%%%%%%

\subsubsection{Type D}

As in Section~\ref{Type-D-combintorial-model-section},
we are identifying the elements of $NC^{(s)}(D_n)$ 
with certain ``annular''
noncrossing partitions.
It is convenient to consider separately the two cases
where a zero block is present or absent; these will correspond to the
two cases in Proposition~\ref{evaluations-prop}(iii)
where $\hat{L}(\lambda)=1$ and $\hat{L}(\lambda)=0$, respectively.

\vskip.1in
\noindent
{\sf The case with a zero block present.}

Due to the {\it zero-block closure condition $NCD3$},
removing the $2s$ elements on the inner boundary
of the annulus from the zero block gives a bijection between
the subset of elements of $NC^{(s)}(D_n)$ having a zero block, and
the whole set $NC^{(s)}(B_{n-1})$.  Furthermore, this bijection is
equivariant with respect to the $\ZZ/s(n-1)\ZZ$-action on these sets.

Just as in the type $B/C$ case, let
us assume that the nonzero blocks
$\{B_1,\iota(B_1)\},\ldots,\{B_\ell,\iota(B_\ell)\}$
have sizes $s\nu$ where $\nu=(\nu_1,\ldots,\nu_\ell)$ with $|\nu|\leq n-2$.
Then the principal-in-a-Levi nilpotent orbit $\0_\lambda$
corresponding to the same parabolic subgroup $W_X$
has $\lambda=(\nu,\nu,1,j_0)$ where $j_0=2n-1-2|\nu|$.
Again as in the type $B/C$ case, 
$d$-fold symmetry implies that the nonzero blocks all lie in orbits of
size $d$.  Hence there will be $2\mu_j(\nu)=2\hat{\mu}_j(\lambda)$ 
nonzero blocks of size $sj$ for each $j$, 
so $2\hat{\mu}_j(\lambda) \equiv 0 \bmod{d}$
for each $j$.  Also the number of such $d$-fold symmetric elements
should be the same as the formula
in Proposition~\ref{evaluations-prop}(ii), replacing
$n$ by $n-1$.  This exactly matches the conditions
and the formula in the $\hat{L}(\lambda)=1$ case of
Proposition~\ref{evaluations-prop}(iii).

\vskip.1in
\noindent
{\sf The case with no zero block.}

Again assume that the nonzero blocks
$\{B_1,\iota(B_1)\},\ldots,\{B_\ell,\iota(B_\ell)\}$
have sizes $s\nu$ where $\nu=(\nu_1,\ldots,\nu_\ell)$ with $|\nu|= n$.
Then the principal-in-a-Levi nilpotent orbit $\0_\lambda$
corresponding to the same parabolic subgroup $W_X$
has $\lambda=(\nu,\nu)$.

We are claiming that this case will match with the
conditions and formulas appearing in
the $\hat{L}(\lambda)=0$ cases of
Proposition~\ref{evaluations-prop}(iii).
Thus one should expect the analysis to 
break into further subcases based on whether $d=1,2$ or at least $3$.

\vskip.1in
\noindent
{\sf The subcase with no zero block and $d=1$.}

Here one wishes to count {\it all} of the elements of $NC^{(s)}(D_n)$ whose
annular noncrossing partition has no zero block and
nonzero block sizes $s\nu$.  This is given by a
formula of Krattenthaler and M\"uller \cite[Corollary 16]{KrattenthalerMuller},
\cite[Theorem 1.2]{Kim2}.  Using the formulation in
\cite[Theorem 1.2]{Kim2}, and the
notational correspondences $\ell=1$, $b=\mu$, $k=s$, so that $s_1=n-b, s_2=b$,
one obtains a formula equivalent to
the $\hat{L}(\lambda)=0$ case of
Proposition~\ref{evaluations-prop}(iii) with $d=1$.

\vskip.1in
\noindent
{\sf The subcase with no zero block and $d=2$.}

Here one wishes to count the elements of $NC^{(s)}(D_n)$ whose
annular noncrossing partition has no zero block and
nonzero block sizes $s\nu$, and with the additional property
that they are fixed by the
element $c^{s(n-1)}$ of order $2$ inside the cyclic group
$C \cong \ZZ/2s(n-1)\ZZ$.  Note that this
element has different effects on the outer and inner boundary
labels:  on the outer boundary it rotates $180$ degrees,
acting as the map $\iota: i \leftrightarrow -i$, while on the inner boundary
it iterates the $180$-degree rotation $n-1$ times, acting as $\iota^{n-1}$.

Thus whenever the 
annular noncrossing partition has two (nonzero) inner blocks of
size $s$, it is always fixed by $c^{s(n-1)}$: all blocks
will be either entirely inner or outer, and hence
the {\it $\iota$-stability condition $NCD2$} implies the stability
of the blocks under any power of $\iota$.

If the annular noncrossing partition does not have two inner blocks
of size $s$, then all of the labels on its inner boundaries lie
in traversing blocks.  We claim that it is then fixed by
$c^{s(n-1)}$ if and only if $n$ is even:  on the outer
labels one is acting by $\iota$, and
on the inner labels one is acting by $\iota^{n-1}$,
so one needs $\iota^{n-1}=\iota$ to be fixed, that is,
$n$ even.

Thus for $n$ even, all of these elements are fixed by
$c^{s(n-1)}$, and therefore should have the same formula as in
Proposition~\ref{evaluations-prop}(iii) with $d=1$.  Indeed
this agrees with the formula in Proposition~\ref{evaluations-prop}(iii) 
when $\hat{L}(\lambda)=0$ and $d=2$ and $n$ even.

Meanwhile for $n$ odd, the elements fixed by
$c^{s(n-1)}$ are those with two (nonzero) inner blocks of size $s$.
Removing these inner blocks gives an easy bijection to the elements
of $NC^{(s)}(B_{n-1})$ with all nonzero blocks, of sizes
$s\lambda-(s,s)$, where $s\lambda-(s,s)$ is obtained from
$s\lambda$ by removing two copies of the part $s$ (corresponding to
these inner blocks).  Therefore this should have the same formula
as Proposition~\ref{evaluations-prop}(ii) but replacing $\lambda$ with
$\lambda-(1,1)$, namely
$$
\binomial{s(n-1)}{\hat{\mu}_1(\lambda)-1, \hat{\mu}_{\geq 2}(\lambda)}.
$$
Happily, one has an easily checked identity
\begin{equation}
\label{easily-checked-identity}
\binomial{s(n-1)}{\hat{\mu}_1(\lambda)-1, \hat{\mu}_{\geq 2}(\lambda)}
=
\binomial{s(n-1)}{\hat{\mu}_{\geq 2}(\lambda)}
\binomial{s(n-1)+1-|\hat{\mu}_{\geq 2}(\lambda)|}{\hat{\mu}_1(\lambda)}
-\binomial{s(n-1)}{\hat{\mu}(\lambda)},
\end{equation}
whose right side agrees with the value given in 
Proposition~\ref{evaluations-prop}(iii) 
when $\hat{L}(\lambda)=0$ and $d=2$ and $n$ is odd.

\vskip.1in
\noindent
{\sf The subcase with no zero block and $d \geq 3$.}

Note that there is a dichotomy in the conditions and formulas in 
Proposition~\ref{evaluations-prop}(iii) 
when $\hat{L}(\lambda)=0$ and $d \geq 3$.
This will correspond to the following dichotomy in the 
annular noncrossing partitions that model $NC^{(s)}(D_n)$ and have
no zero block.

\begin{proposition}
\label{NCD-traversing-subtlety}
In annular noncrossing partitions modelling
 $NC^{(s)}(D_n)$ with no zero block, either 
\begin{itemize}
\item[(A)] every label on the inner boundary lies in a traversing block, or 
\item[(B)] there are no traversing
blocks, only entirely outer blocks, and two entirely 
inner blocks each of size $s$.
\end{itemize}
\end{proposition}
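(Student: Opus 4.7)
The plan is to exploit two simple facts in tandem: the inner boundary carries exactly $2s$ labels (namely the $\pm i$ with $s(n-1)+1 \leq i \leq sn$), and condition NCD4 forces every block to have cardinality a positive multiple of $s$. Once these are combined, an entirely inner block has size either $s$ or $2s$, and the dichotomy drops out from a case analysis using the no-zero-block hypothesis and $\iota$-stability.

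First I would handle an entirely inner block $B_I$ of size $2s$. Then $B_I$ contains every inner label. Since $\iota$ permutes the inner labels among themselves, $\iota(B_I) = B_I$, so $B_I$ is a zero block, contradicting the hypothesis.

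Next I would handle an entirely inner block $B_I$ of size $s$. Because $B_I$ is not a zero block, $\iota(B_I) \neq B_I$, and $\iota(B_I)$ is a distinct, entirely inner block of size $s$ disjoint from $B_I$. Their union $B_I \cup \iota(B_I)$ already has $2s$ elements, hence exhausts the inner boundary. Therefore no other block touches the inner boundary; in particular, no traversing block can exist. The remaining blocks are all entirely outer, which places us in case (B), and the two required entirely inner blocks of size $s$ are precisely $B_I$ and $\iota(B_I)$.

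Finally, if no entirely inner block exists, then every inner label is forced into a traversing block, and we are in case (A); since traversing blocks exist, case (A) is genuine. These alternatives exhaust all possibilities, so the dichotomy is established. There is no serious obstacle here — the argument is essentially a pigeonhole computation on the $2s$ inner labels; the only subtlety is verifying that a $2s$-sized entirely inner block is automatically $\iota$-stable (and hence a zero block), and that an $s$-sized entirely inner block always pairs with a distinct $\iota$-image whose union fills the inner boundary.
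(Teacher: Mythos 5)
Your proof is correct, but it takes a genuinely different route from the paper's. The paper argues by contradiction using the finer content of the strong $s$-divisibility condition NCD4: assuming some inner label $j$ lies in a traversing block $B$, it observes that $-j$ lies in the distinct traversing block $\iota(B)$, and that an entirely inner block would, by NCD4, have to contain a label whose absolute value hits the residue class of $|j|$ modulo $s$ --- impossible, since $j$ and $-j$ are the only inner labels in that class; hence every inner label lies in a traversing block, which is case (A). (The complementary direction, that the absence of traversing blocks forces exactly two entirely inner blocks of size $s$, is left implicit there.) You instead split on whether an entirely inner block exists, and use only the coarser consequence of NCD4 that every block has size a positive multiple of $s$, together with the count of $2s$ inner labels, the $\iota$-stability condition NCD2, and the no-zero-block hypothesis; the noncrossing condition NCD1 and the residue-class structure of NCD4 play no role. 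This is more elementary, and it has the added virtue of making the case (B) branch fully explicit. The one step you assert rather than prove is that NCD4 forces $s$ to divide the size of every block: this holds because the residues of the absolute values advance by one at each step of the clockwise cyclic reading of a block and must close up after $|B|$ steps (which is exactly why the paper names NCD4 ``strong'' $s$-divisibility and records all block sizes as $s\lambda$ or $s\nu$); one sentence to that effect would make your argument self-contained.
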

\begin{proof}
Assume case (B) fails, that is, some inner boundary label $j$
lies in a traversing block $B$.
Then $-j$ lies in $\iota(B)$, which will be a {\it different} 
traversing block, since we are assuming that there
is no zero block.  But then the
the strong $s$-divisibility condition $NCD4$ now
prevents any of the inner boundary labels from lying in an entirely
inner block:  $NCD4$ implies that such a traversing block 
would contain elements having absolute values that achieve
every residue class in $\ZZ/s\ZZ$, which is impossible
since $j$ and $-j$ are the only inner boundary labels
whose absolute values achieve their residue class.
\end{proof}

An immediate corollary is that if an element of $NC^{(s)}(D_n)$
corresponds to an annular noncrossing partition with no zero blocks and
has $d$-fold symmetry, then either it is in case (A) of
Proposition~\ref{NCD-traversing-subtlety} and its blocks all come in orbits of
size $d$ (as they are all entirely outer or traversing),
or it is in case (B), so that its entirely outer blocks come in orbits of size $d$, leaving only the two entirely inner blocks (each of size $s$).
This means that its block sizes $s\lambda$ will satisfy 
$$
\begin{aligned}
\mu_j(\lambda)(=2\hat{\mu}_j(\lambda)) 
&\equiv 0 \bmod{d^+}\text{ for each }j \geq 2,\\
\mu_1(\lambda)(=2\hat{\mu}_1(\lambda)) 
&\equiv
\begin{cases} 
0 \bmod{d^+}&\text{ in case (A),}\\
2 \bmod{d^+}&\text{ in case (B).}
\end{cases}
\end{aligned}
$$
Note that this matches the dichotomy of conditions in
Proposition~\ref{evaluations-prop}(iii) 
when $\hat{L}(\lambda)=0$.  It remains only to check
that the formulas there match the number of $d$-fold
symmetric elements for $d \geq 3$ in each case.

In case (B), we claim that the two entirely inner blocks of size $s$
are always stable under the element $c^{\frac{2s(n-1)}{d}}$ of order $d$
inside the cyclic group $C=\langle c \rangle$.  This is because
one has
$$
2n= \sum_j j \mu_j(\lambda) \equiv 2 \bmod{d^+}
$$
so that $d^+$ divides $2(n-1)$, and hence $s$ divides $\frac{2s(n-1)}{d}$.
Thus these two inner blocks are always $d$-fold symmetric, and they are completely determined by the entirely outer blocks (according to the {\it determinacy condition} $NCD5$).  Therefore 
the number of $d$-fold symmetric elements in case (B) is
the same as the number of $d$-fold symmetric elements of type $B_{n-1}$ having
block sizes $s\lambda-(s,s)$, that is, the formula from
Proposition~\ref{evaluations-prop}(ii),
but replacing $\lambda$ with $\lambda-(1,1)$:
$$
\binom{\frac{2s(n-1)}{d^+}}
      {\frac{\mu_1(\lambda)-2}{d^+},\frac{\mu_{\geq 2}(\lambda)}{d^+}}.
$$
This matches the desired formula in
Proposition~\ref{evaluations-prop}(iii), with 
$\hat{L}(\lambda)=0, d \geq 3$ and
$\hat{\mu}_1(\lambda) \equiv 0 \bmod{d^+}$.

In case (A), we need a further structural observation.

\begin{lemma}
\label{counterclockwise-becomes-clockwise-lemma}
For $d \geq 3$, in case (A),
the annular noncrossing partition 
is fixed by $c^{\frac{2s(n-1)}{d}}$ 
if and only if it has both $d$-fold {\it rotational symmetry} and in
addition, $d$ divides $n$.
\end{lemma}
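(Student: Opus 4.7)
The key idea is to compare the action of $c^k$ (for $k = \frac{2s(n-1)}{d}$) on the outer and inner boundaries against a rigid rotation of the annulus. On the outer, $c^k$ is a clockwise shift by $k$ labels out of $2s(n-1)$, realizing a clockwise angular rotation by $\frac{2\pi}{d}$. On the inner, $c^k$ is a counterclockwise shift by $k$ labels out of $2s$, realizing a clockwise angular rotation by $-\frac{2\pi(n-1)}{d}$. These two angular rotations coincide modulo $2\pi$ if and only if $\frac{1}{d} + \frac{n-1}{d} = \frac{n}{d}$ lies in $\ZZ$, that is, $d \mid n$. This immediately yields the reverse direction: if $d \mid n$, then $c^k$ is the rigid rotation of the annulus by $\frac{2\pi}{d}$ clockwise, so any partition with $d$-fold rotational symmetry is fixed by $c^k$.

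For the forward direction, suppose the partition $P$ is in case (A) and fixed by $c^k$. Let $t$ be the number of traversing blocks of $P$. A standard property of annular noncrossing partitions is that the cyclic order of traversing blocks going clockwise on the outer is the reverse of the cyclic order going counterclockwise on the inner. Since $c^k$ permutes traversing blocks cyclically, let $\ell_{\text{out}}$ and $\ell_{\text{in}}$ denote the respective block-position shifts induced on the outer and inner sides; the reversal property yields the congruence $\ell_{\text{in}} + \ell_{\text{out}} \equiv 0 \pmod{t}$. Summing the identity that the outer arc start of $T_{i+\ell_{\text{out}}}$ lies $k$ positions clockwise of the outer arc start of $T_i$, over all traversing blocks, produces $\ell_{\text{out}} \equiv t/d \pmod{t}$. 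The analogous inner summation, which uses case (A) crucially to ensure inner arcs exhaust the inner boundary, gives $\ell_{\text{in}} \equiv t(n-1)/d \pmod{t}$. Substituting, $\frac{tn}{d} \equiv 0 \pmod{t}$, forcing $d \mid n$. Once $d \mid n$ is known, the first paragraph shows $c^k$ acts as a rigid rotation by $\frac{2\pi}{d}$ clockwise, and fixedness of $P$ becomes $d$-fold rotational symmetry.

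The main technical obstacle is handling traversing blocks whose disk hulls intersect the inner or outer boundary in multiple arcs, which obstructs the simple ``single cyclic order of blocks'' framework used above. In the general case one must index by inner and outer arcs rather than by blocks, and verify that the same relation $\ell_{\text{in}} + \ell_{\text{out}} \equiv \frac{tn}{d} \pmod{T}$ (for $T$ the total number of arcs along one boundary) still emerges, using that $c^k$ permutes arcs compatibly with the reversal between outer-CW and inner-CCW cyclic orders. I expect this generalization to go through uniformly by recording the multiplicities of arcs attached to each block, but verifying it requires a careful case analysis of the noncrossing structure.
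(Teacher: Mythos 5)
Your route genuinely differs from the paper's. The paper fixes a single block $B=B_i\sqcup B_o$ of a partition fixed by $g=c^{2s(n-1)/d}$, notes that the images $g^j(B_o)$ read clockwise on the outer boundary while the $g^j(B_i)$ read counterclockwise on the inner, and invokes the noncrossing condition NCD1 (together with $d\ge 3$) to force the counterclockwise reading to be clockwise; this gives $d$-fold rotational symmetry first, and then $d\mid n$ falls out of comparing the inner step count $\frac{2s(n-1)}{d}$ (counterclockwise) with $\frac{2s}{d}$ (clockwise). You instead do rotation-number bookkeeping over \emph{all} traversing blocks to get $d\mid n$ first and then read off rotational symmetry; your reverse direction (rigid rotation when $d\mid n$) is the same as the paper's. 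The arithmetic in your scheme is sound: with rotation-equivariant markers one indeed gets $\ell_{\mathrm{out}}\equiv t/d$ and $\ell_{\mathrm{in}}\equiv t(n-1)/d \pmod t$, the reversal of the two cyclic orders gives $\ell_{\mathrm{in}}+\ell_{\mathrm{out}}\equiv 0 \pmod t$, and hence $d\mid n$ (this also forces $d\mid t$; note $t\ge 2$ automatically, since a unique traversing block would be $\iota$-stable and hence a zero block, excluded here).

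The genuine gap is the one you flag yourself: the shift identities are only justified when each traversing block has a single, canonically chosen arc on each boundary, and you leave the multi-arc case as an expectation, suggesting to ``index by arcs,'' which would actually spoil the clean shift structure since arcs of one block recur. The gap is fillable, but two structural facts must be proved rather than assumed. First, in case (A) every traversing block meets the inner circle in a \emph{single} arc: if it had two inner arcs, the inner labels in a gap between them would be enclosed by the block's hull, so they could not lie in traversing blocks, contradicting (A); this is what legitimizes the inner marker and the ``reversal'' statement you call standard. Second, the outer traces of distinct traversing blocks cannot interleave (a path inside one block from an outer label to the inner circle cuts the annulus, and NCD1 applies), so each traversing block's outer labels form one contiguous cluster in the spoke order even when its trace has several arcs, and the clockwise-first label of each cluster is a rotation-equivariant marker. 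With such markers, the summation must be done with lifts ($\tilde p_{i+t}=\tilde p_i+N$, so that $\tilde p_{i+\ell}=\tilde p_i+k$ sums to $\ell N=tk$, giving $\ell_{\mathrm{out}}=tk/N=t/d$); the unlifted summation you describe only yields $tk\equiv 0 \bmod N$, i.e. $d\mid t$, not the value of $\ell_{\mathrm{out}}$. Once these points are supplied your proof is complete; the paper's single-block argument is a way of avoiding this bookkeeping altogether.
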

\begin{proof}
Since in case (A) one has $d$ dividing $2n=\sum_j \mu_j(\lambda)$,
and $d$ also dividing $2s(n-1)$, one concludes that $d$ divides $2s$.
Thus $d$ divides the number of labels on both the inner boundary and the outer boundary.  

Given any block $B$ of a $d$-fold symmetric 
annular noncrossing partition in case (A),
decompose it uniquely as $B=B_i \sqcup B_o$ with inner boundary labels $B_i$ 
and outer boundary labels $B_o$.   
Then under the action of $g:=c^{\frac{2s(n-1)}{d}}$ 
one must have $d$ disjoint images
$B_o,g(B_o),\ldots,g^{d-1}(B_o)$ reading {\it clockwise}, and
also $d$ disjoint images
$B_i,g(B_i),\ldots,g^{d-1}(B_i)$ reading {\it counterclockwise}.  However,
since each of the sets $g^j(B)=g^j(B_i) \sqcup g^j(B_o)$ is another
block, the {\it noncrossing condition $NCD1$} (and $d \geq 3$) 
implies that the counterclockwise ordering 
$B_i,g(B_i),\ldots,g^{d-1}(B_i)$ must actually also be {\it clockwise}.  In
other words, the partition has $d$-fold rotational symmetry.

Furthermore, since the inner boundary has $2s$ elements
and rotating it $\frac{2s(n-1)}{d}$ steps counterclockwise is the
same as rotating it $\frac{2s}{d}$ steps clockwise, one concludes
that 
$$
\frac{-2s(n-1)}{d} \equiv \frac{+2s}{d} \bmod{2s}, \qquad \text{ that is},
\frac{2sn}{d} \equiv 0 \bmod{2s}, \qquad \text{ or }d\text{ divides }n.
$$

Conversely, when $d$ divides $n$ and the partition 
has $d$-fold rotational symmetry, one can reverse the above arguments to see that it is fixed by $c^{\frac{2s(n-1)}{d}}$.
\end{proof}

We can now complete the comparison of the number of
$d$-fold symmetric elements in case (A) of 
Proposition~\ref{NCD-traversing-subtlety}
with the formula in Proposition~\ref{evaluations-prop}(iii) 
at $\hat{L}(\lambda)=0$ and $d \geq 3$
\begin{equation}
\label{last-sub-sub-case-formula}
\left( 1+(-1)^{\frac{2n}{d}} \right)
\binomial{\frac{2s(n-1)}{d^+}}{\frac{\mu(\lambda)}{d^+}}
=\begin{cases}
2\binomial{\frac{2s(n-1)}{d^+}}{\frac{\mu(\lambda)}{d^+}}& \text{ if }d\text{ divides }n,\\
0 & \text{ if }d\text{ does not divide }n.
\end{cases}.
\end{equation}
As noted at the start of the
proof of Lemma~\ref{counterclockwise-becomes-clockwise-lemma},
the assumptions of case (A) imply that
$d$ divides $2n$.  

If $d$ does not divide $n$,
Lemma~\ref{counterclockwise-becomes-clockwise-lemma}
implies that there are no $d$-fold symmetric elements,
matching the value of $0$ on the right 
in \eqref{last-sub-sub-case-formula}.

So assume that $d$ does divide $n$.
By Lemma~\ref{counterclockwise-becomes-clockwise-lemma} we must count 
the annular noncrossing partitions modelling elements in $NC^{(s)}(D_n)$
which lie in case (A) of Proposition~\ref{NCD-traversing-subtlety},
having block sizes $s\lambda$, and which are additionally
$d$-fold rotationally symmetric.  The rotational symmetry means that such
 a partition is completely determined by restricting its blocks to the 
$\frac{2s(n-1)}{d^-}$ 
outer labels $\{\pm 1,\pm 2,\ldots, \pm \frac{s(n-1)}{d^-} \}$; the 
$\frac{2s}{d^-}$ inner labels that accompany these blocks
are determined by the {\it strong $s$-divisibility condition $NCD4$}.
Since $d$ divides $m-1=2s(n-1)$,
it also divides $2s$.  Thus, setting $\tilde{s}:=\frac{s}{d^-}$,
the well-defined reduction map $\ZZ/s\ZZ \rightarrow \ZZ/\tilde{s}\ZZ$
shows that these blocks will also satisfy the
{\it strong $\tilde{s}$-divisibility condition $NCD4$}
and determine a unique element of 
$NC^{\tilde{s}}(D_n)$.  Its blocks still have sizes of the
form $sj$, of course, and the number of blocks of size $sj$ will
be $\frac{\mu_{sj}(s\lambda)}{d^-}=\frac{\mu_{j}(\lambda)}{d^-}$.
This means that this element of $NC^{\tilde{s}}(D_n)$
has block sizes 
$\tilde{s} \tilde{\lambda}$ where $\tilde{\lambda}$ only has parts
of the form $d^- \cdot j$.  Note that $d \geq 3$ forces $d^- \geq 2$, so that
$\mu_1(\tilde{\lambda})=0$ 
and $\mu_{\geq 2}(\tilde{\lambda})=\mu(\tilde{\lambda})$.  Also,
$
\mu_{d^- \cdot j}(\tilde{\lambda})=\frac{\mu_{sj}(s\lambda)}{d^-}=\frac{\mu_{j}(\lambda)}{d^-}.
$
Hence the number of such elements is counted by
the formula of Krattenthaler and M\"uller already mentioned, 
that is, the $\hat{L}(\lambda)=0$ case of
Proposition~\ref{evaluations-prop}(iii) with $d=1$, replacing $s$ with $\tilde{s}$:
$$
\begin{aligned}
&
\binomial{\tilde{s}(n-1)}{\hat{\mu}_{\geq 2}(\lambda)}
\binomial{\tilde{s}(n-1)+1-|\hat{\mu}_{\geq 2}(\lambda)|}
     {\hat{\mu}_1(\lambda)} 
+  \binomial{\tilde{s}(n-1)}{\hat{\mu}(\lambda)} \\
&=
\binomial{\tilde{s}(n-1)}{\frac{\hat{\mu}(\lambda)}{d^-}}
\binomial{\tilde{s}(n-1)+1-|\hat{\mu}_{\geq 2}(\lambda)|}
     {0} 
+  \binomial{\tilde{s}(n-1)}{\frac{\hat{\mu}(\lambda)}{d^-}} 
=2 \binomial{\tilde{s}(n-1)}{\frac{\hat{\mu}(\lambda)}{d^-}} 
=2 \binomial{\frac{2s(n-1)}{d^+}}{ \frac{\mu(\lambda)}{d^+} },
\end{aligned}
$$
which matches \eqref{last-sub-sub-case-formula} when $d$ divides $n$.

The completes the proof of Theorem~\ref{CSP-theorem}.

%%%%%%%%%%%%%%%%%%%%%%%%%%%

\section*{Acknowledgments}
%%%%%%%%%%%%%%%%%%%%%%%%%%%
The first author thanks Jang Soo Kim for helpful conversations.

\end{document}